\tikzstyle{vtx} = [circle, fill, inner sep=1.5]
\tikzstyle{arch} = [out=30, in=150]
\tikzstyle{rlabel} = [midway, inner sep=0, outer sep=0,circle,fill=white, node contents={\tiny R}]
\tikzstyle{blabel} = [midway, inner sep=0, fill=white, node contents={\tiny B}]
\newtheorem{theorem}{Theorem}[section]
\newtheorem{conjecture}{Conjecture}[section]
\newtheorem{lemma}{Lemma}[section]
\newtheorem{claim}{Claim}[section]
\theoremstyle{definition}
\theoremstyle{remark}
\title{Online Ramsey numbers of ordered paths and cycles}
\author{%
Felix Christian Clemen\footnote {Department of Mathematics, Karlsruhe Institute of Technology, 76131 Karlsruhe, Germany, Email: \texttt{felix.clemen@kit.edu}}%
 \and Emily Heath\footnote {Department of Mathematics, Iowa State University, Ames, Iowa 50011, USA, Email: \texttt{eheath@iastate.edu}. Research partially supported by NSF RTG Grant DMS-1839918.}%
 \and Mikhail Lavrov\footnote {Department of Mathematics, Kennesaw State University, Marietta, Georgia 30067, USA, Email: \texttt{mlavrov@kennesaw.edu}}%
  }
\begin{document}

\maketitle

\begin{abstract}
    An ordered graph is a graph with a linear ordering on its vertices. The online Ramsey game for ordered graphs $G$ and $H$ is played on an infinite sequence of vertices; on each turn, Builder draws an edge between two vertices, and Painter colors it red or blue. Builder tries to create a red $G$ or a blue $H$ as quickly as possible, while Painter wants the opposite. The online ordered Ramsey number $r_o(G,H)$ is the number of turns the game lasts with optimal play. 
    
    In this paper, we consider the behavior of $r_o(G,P_n)$ for fixed $G$, where $P_n$ is the monotone ordered path. We prove an $O(n \log_2n)$ bound on $r_o(G,P_n)$ for all $G$ and an $O(n)$ bound when $G$ is $3$-ichromatic; we partially classify graphs $G$ with $r_o(G,P_n) = n + O(1)$. Many of these results extend to $r_o(G,C_n)$, where $C_n$ is an ordered cycle obtained from $P_n$ by adding one edge.
\end{abstract}

\section{Introduction}

The \emph{Ramsey number} $r(G,H)$ of two graphs $G$ and $H$ is the least $n$ such that any red-blue edge-coloring of the complete graph $K_n$ contains a red copy of $G$ or a blue copy of $H$. Finding bounds on such Ramsey numbers has been an important problem in graph theory for many years; see~\cite{CFS} for a survey of known bounds on Ramsey numbers.

The online version of the Ramsey problem considers a setting where the red-blue coloring is not given at once, but is revealed gradually. To consider the worst-case scenario, we model this setting as a game between two players, Builder and Painter, on an infinitely large set of vertices. Again, we pick two graphs $G$ and $H$.  On each turn, Builder draws an edge between two vertices and Painter colors it red or blue. The game ends when a red copy of $G$ or a blue copy of $H$ is formed; Builder seeks to minimize the number of turns, and Painter seeks to maximize it. 

The \emph{online Ramsey number} is the number of rounds in this game, assuming optimal play. Equivalently, it is the minimum number of edge queries necessary to find a red copy of $G$ or a blue copy of $H$ in an infinite complete graph, assuming a worst-case scenario for the outcome of these queries. Online Ramsey numbers are also called \emph{online size Ramsey numbers} in the literature, since they track the size (number of edges) used. Introduced independently by~\cite{B3} and~\cite{KR}, these numbers have since been studied by many authors including~\cite{AB},~\cite{C},~\cite{CDLL}, and~\cite{GKP}.

\subsection{Ordered graphs}

In this paper, we consider online Ramsey numbers of ordered graphs. An \emph{ordered graph} is a graph with a linear ordering on its vertices. We think of the vertices of an ordered graph as being arranged on a horizontal line in order from left to right. Subgraphs of an ordered graph $G$ inherit the ordering on $V(G)$, and isomorphisms between ordered graphs must be order-preserving.

From now on, we assume that all graphs are ordered graphs. We will borrow the notation $P_n$, $C_n$, $K_n$, $K_{n_1, \dots, n_k}$ to use for ordered graphs, by giving their vertices a standard ordering:
\begin{itemize}
    \item The ordered path $P_n$ has $n$ vertices $v_1 < v_2 < \dots < v_n$ and edges $v_i v_{i+1}$ for $i=1, \dots, n-1$.
    \item The ordered cycle $C_n$ is obtained from $P_n$ by adding the edge $v_1 v_n$.
    \item The ordered complete graph $K_n$ has all edges between $n$ vertices $v_1 < v_2 < \dots < v_n$. (Here, all orderings of the vertices yield isomorphic ordered graphs.)
    \item The ordered complete $k$-partite graph $K_{n_1, n_2, \dots, n_k}$ has $k$ parts with $n_1, n_2, \dots, n_k$ vertices respectively; for each $i$, the $n_i$ vertices in the $i$\textsuperscript{th} part are consecutive in the vertex ordering.
\end{itemize}

Ramsey problems on ordered graphs are motivated by the Erd\H os--Szekeres theorem~\cite{ESz}. This result asserts that any sequence of $(r-1)(s-1)+1$ distinct real numbers contains either an increasing subsequence of length $r$ or a decreasing subsequence of length $s$, while a sequence of length $(r-1)(s-1)$ is not enough. 

The Erd\H os--Szekeres theorem has many proofs~\cite{MR1380525}, most of which easily extend to the stronger result that the ordered Ramsey number of $P_r$ versus $P_s$ is $(r-1)(s-1)+1$.
Given a sequence of distinct real numbers $x_1, x_2, \dots, x_n$, we color $K_n$ by coloring $v_i v_j$ red if $x_i < x_j$, and blue if $x_i > x_j$. A red $P_r$ or blue $P_s$ in $K_n$ corresponds to an increasing or decreasing subsequence of the desired length. Not all colorings of $K_n$ originate from a sequence $x_1, x_2, \dots, x_n$, but this turns out not to affect the result.

We will write $r_o(G,H)$ for the online Ramsey number for ordered graphs $G$ and $H$. These were  first studied in the case $r_o(P_r, P_s)$ by~\cite{BCHL2} and~\cite{PPW}. Just as in the case of unordered graphs, the online Ramsey number is defined by a game played between Builder and Painter on an infinite set of vertices. Here, we assume that the vertex set is $\mathbb N$ with the usual ordering.

It will sometimes be convenient, when describing a strategy for Builder, to assume that between any two vertices that have already been used, another vertex can be found. This can be guaranteed even on vertex set $\mathbb N$ with foresight on Builder's part: if Builder's strategy wins in $t$ turns, then at most $2t$ distinct vertices are used, so Builder may choose the $i^{\text{th}}$ vertex to be an element of $\mathbb N$ divisible by $2^{2t-i}$ but not by $2^{2t-i+1}$. This will be needed, for example, in the proof of Theorem~\ref{thm:3-ichromatic}.

The online Ramsey game for ordered graphs is closely related to a sorting problem: given an infinite sequence $x_1, x_2, \dots$ of distinct real numbers, how many comparisons are needed to locate an increasing subsequence of length $r$ or a decreasing subsequence of length $s$? However, Painter has more flexibility in coloring edges, since edge colors do not have to obey transitivity; it is unknown whether the two problems are equivalent.

\subsection{Our results}

In this paper, we consider online ordered Ramsey numbers of the form $r_o(G, P_n)$ for various ordered graphs $G$. We are especially interested in how $r_o(G, P_n)$ varies with $n$ when $G$ is fixed.

Theorem 2.1 in~\cite{BCHL2} gives an upper bound on $r_o(P_m, P_n)$ of the form $O(mn \log_2n)$. Our first result is a generalization of this upper bound to $r_o(G, P_n)$. Here, let $\Delta^-(G)$ denote the \emph{maximum left degree} of $G$: the maximum number of edges between any vertex $v$ and vertices that precede $v$. 

\begin{theorem}\label{thm:left-degree}
    For any ordered graph $G$, $r_o(G, P_n) \le \Delta^-(G) |V(G)| n \log_2 n$.
\end{theorem}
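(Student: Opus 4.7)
The plan is to proceed by induction on $h = |V(G)|$, generalizing the inductive strategy used for $r_o(P_m, P_n)$ in Theorem 2.1 of~\cite{BCHL2}. Set $\Delta = \Delta^-(G)$ and order $V(G)$ as $u_1 < u_2 < \cdots < u_h$. The base case $h = 1$ requires no edges and is trivial. For the inductive step, let $u_h$ be the rightmost vertex of $G$ and set $G' = G - u_h$. Since $u_h$ is rightmost, removing it does not affect $\deg^-(v)$ for any $v \in V(G')$, so $\Delta^-(G') \le \Delta$, and the inductive hypothesis gives $r_o(G', P_n) \le \Delta(h-1)\, n \lg n$.

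Builder's strategy has two phases. In Phase~1, Builder applies the inductive strategy for $(G', P_n)$ using at most $\Delta(h-1)\, n \lg n$ turns: either a blue $P_n$ appears (and we are done) or Builder has produced a red copy of $G'$ at some vertices $x_1 < \cdots < x_{h-1}$. In Phase~2, within a further $\Delta\, n \lg n$ turns, Builder either finds a vertex $x_h$ to the right of $x_{h-1}$ whose edges to a prescribed set $S \subseteq \{x_1, \dots, x_{h-1}\}$ of size at most $\Delta$ (the images of the left-neighbors of $u_h$) are all red, or forces a blue $P_n$. Summing yields the total bound $\Delta h\, n \lg n$.

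The heart of the argument is the Phase~2 sub-claim: given a set $S$ of at most $\Delta$ already-used vertices and the freedom to use fresh vertices to the right of everything, Builder can, in at most $\Delta\, n \lg n$ turns, either find $v > \max(S)$ with red edges to every $s \in S$ or force a blue $P_n$. To prove it, Builder maintains a growing blue monotone path $B$ with right endpoint $b$. For each new candidate $v > b$, Builder first queries the \emph{connector edge} $bv$: if it is blue, $B$ extends and $b$ is updated to $v$; if red, Builder tests $v$ by querying the up to $\Delta$ edges from $v$ to $S$, declaring success if all are red and discarding $v$ otherwise. The candidates are placed following a binary-tree-like schedule of depth $\lg n$, designed so that blue failures chain monotonically through the connector edges.

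The main obstacle is justifying the budget and the chaining property of the sub-claim. The connector-edge trick is what ensures blue edges contribute to a single monotone path rather than an unrelated collection of stars; the binary-tree schedule ensures that failed candidates (whose connector is red but whose edge to $S$ is blue) do not accumulate without bound. I expect the verification to rest on an invariant of the form: after $t$ candidate tests, either the sub-claim has already succeeded, or $B$ has length at least $\lceil t / (c \lg n) \rceil$ for a constant $c$, so that once $t = \Delta\, n \lg n / (\Delta + 1)$ candidates have been tested, either $B$ has reached length $n - 1$ (yielding the blue $P_n$) or success has occurred.
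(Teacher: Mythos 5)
Your Phase 2 sub-claim is false, and the two-phase induction cannot be repaired in this form. Consider Painter's counter-strategy during Phase 2: for each candidate $v$, color the connector edge $bv$ red (so $B$ never grows), then color exactly one test edge $sv$, $s \in S$, blue and the rest red. Builder never finds a $v$ with all-red edges to $S$. Meanwhile the only new blue edges go from the small fixed set $S$ (all to the left of $b$) to candidates (all to the right of $b$); a monotone blue path can use at most one such edge and then has nowhere to go, since the candidate's only blue neighbors are back in $S$, to its left. So the longest blue path does not grow beyond what Phase~1 produced (which is shorter than $P_n$, or Builder would already have won), and Painter survives indefinitely. Already for $G = K_{1,2}$, where $G' = K_2$ and $S$ is a single vertex, Painter's blues form a blue star and never yield a blue $P_4$. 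The ``binary-tree schedule'' cannot rescue this: Painter's blue edges all touch the fixed set $S$, so no placement scheme makes them chain into a long monotone path.

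The underlying flaw is that you commit to one fixed red copy of $G'$ before hunting for the last vertex, so Painter only needs to guard the at most $\Delta$ edges pointing at the fixed target $S$. The paper's strategy interleaves all the levels at once: Builder keeps $n-1$ partial red copies $G_1, \dots, G_{n-1}$, where each $G_i$ is an initial segment of $G$ and every vertex of $G_i$ is the right endpoint of a blue $P_i$. Each fresh vertex $u$ is binary-searched into this list using at most $\Delta^-(G)\lceil \lg n \rceil$ queries; a blue edge in the comparison against $G_{i-1}$ simultaneously hands $u$ a blue $P_i$ ending at it and makes $u$ eligible to join $G_i$. Thus every blue edge promotes $u$ to a higher level, and after $(|V(G)|-1)(n-1)+1$ placements Painter has either let some $G_i$ fill out to a red $G$ or let some $u$ pass level $n-1$ and become the end of a blue $P_n$. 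It is exactly this coupling --- a blue edge is never wasted because it always advances the blue-path bookkeeping \emph{and} moves the new vertex up a level --- that your decomposition into ``build $G'$, then finish'' throws away.
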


By symmetry, the same bound applies with left degree replaced by right degree.

Note that in Theorem 2.1 of~\cite{BCHL2}, we are free to swap $m$ and $n$, so for fixed $m$, when $n$ is large, it also gives an upper bound of $O(mn \log_2m)$ which is linear in $n$. However, Theorem~\ref{thm:left-degree} does not allow the same flexibility. This motivates the question: for which $G$ is $r_o(G, P_n)$ linear in $n$, when $G$ is fixed?

An \emph{interval coloring} of an ordered graph $G$ is a proper vertex coloring of $G$ in which each color class is a set of consecutive vertices: if $u,w \in V(G)$ are assigned the same color, then every vertex $v$ between $u$ and $w$ must also be assigned that color. The \emph{interval chromatic number} $\chi_i(G)$ of an ordered graph $G$ is the minimum number of colors in an interval coloring of $G$. We say that $G$ is \emph{$k$-ichromatic} if $\chi_i(G) \le k$.

The interval chromatic number has played a key role in Ramsey and Tur\'an problems for ordered graphs; see for example \cite{MR3575208}, \cite{GHLNPW}, \cite{MR2269435} and \cite{MR4003657}. We are able to prove a linear upper bound on $r_o(G, P_n)$ for any $3$-ichromatic $G$.

\begin{theorem}\label{thm:3-ichromatic}
    For any ordered graph $G$ such that $\chi_i(G) \le 3$, $r_o(G,P_n) = O(n |V(G)|^2 \log_2|V(G)|)$. 
\end{theorem}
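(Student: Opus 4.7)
My plan begins with a reduction: since $\chi_i(G)\le 3$, the vertex set of $G$ partitions into three consecutive intervals $A<B<C$ of sizes $a,b,c$ summing to $N=|V(G)|$. Because interval colorings are proper vertex colorings, $G$ embeds into $K_{a,b,c}$ as an ordered graph, and a red copy of $K_{a,b,c}$ contains a red copy of $G$. Thus it suffices to bound $r_o(K_{a,b,c},P_n)$.

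The main idea is to run Builder's strategy in $\Theta(n/N)$ \emph{super-rounds}, each of which extends the maintained blue path by $\Theta(N)$ vertices or completes a red $K_{a,b,c}$. In a single super-round, Builder reserves a fresh block of vertices to the right of everything built so far and invokes the strategy of Theorem~\ref{thm:left-degree} as a subroutine, but applied to $P_{\Theta(N)}$ rather than $P_n$. The subroutine yields either a red $K_{a,b,c}$ (a global win) or a blue $P_{\Theta(N)}$ inside the block; its cost is $O(\Delta^-(K_{a,b,c})\cdot N\cdot N\cdot \log N)=O(N^3\log N)$, and summing over $\Theta(n/N)$ super-rounds gives $O(nN^2\log N)$. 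The key saving over a direct application of Theorem~\ref{thm:left-degree} is that the logarithmic factor becomes $\log N$ instead of $\log n$, because the subroutine's target has length $\Theta(N)$.

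To glue the per-super-round blue paths into a single $P_n$, each super-round begins with a \emph{connection phase}: given the previous endpoint $u$, Builder queries edges from $u$ to $\Theta(N)$ candidate start vertices in the new block. If any such edge is blue, that candidate starts the new block's blue path and the subroutine proceeds. If all are red, $u$ has accumulated a red star of size at least $b+c$; Builder then commits $u$ to play the role of a vertex in $A$ (using the star leaves as candidates for $B\cup C$) and continues the super-round's subroutine recursively within this narrowed setting. Here the vertex-insertion flexibility noted just before Theorem~\ref{thm:3-ichromatic} is used, so that committed structural vertices can always be placed in the correct ordered positions relative to the three target intervals.

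I expect the main difficulty will be in formalizing this commit-and-recurse mechanism and verifying that partial red structures accumulated across different super-rounds combine into a single red $K_{a,b,c}$ with the correct vertex ordering. The hypothesis $\chi_i(G)\le 3$ is crucial at this step: with only three intervals, each red star committed during a connection phase can be unambiguously interpreted as a single $A$-vertex together with a pool of $B\cup C$-candidates, and the online bookkeeping admits a clean three-part template; additional intervals would force more elaborate branching. Once this mechanism is in place, the final counting of $\Theta(n/N)\cdot O(N^3\log N)=O(nN^2\log N)$ gives the claimed bound.
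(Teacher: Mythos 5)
Your proposal shares some superficial structure with the paper's argument (using Theorem~\ref{thm:left-degree} as a subroutine to produce short blue paths at cost about $N^3 \log N$, repeating $\Theta(n/N)$ times), but the crucial connecting mechanism is genuinely different and, as written, leaves a real gap.

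The paper's proof (Lemma~\ref{lemma:tripartite}) maintains \emph{two} blue paths, $P_x$ on the left and $P_y$ on the right. Each iteration builds a fresh short blue path \emph{between} them and places the $B$-interval of $K_{a,b,c}$ on it, while $A$ is drawn from the right end of $P_x$ and $C$ from the left end of $P_y$. Builder then draws the edges of $G$ on $A\cup B\cup C$. The point is that \emph{every} blue edge among these makes progress: a blue $A$--$B$ edge extends $P_x$ rightward, a blue $B$--$C$ edge extends $P_y$ leftward, and a blue $A$--$C$ edge merges the two paths into a longer one (after which a new right path is created). There is no failure branch; the $3$-interval structure is exactly what makes all three edge types useful.

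Your single-path approach lacks this. When the connection phase fails (all $b+c$ edges from $u$ into the new block are red), you only have a red star centered at $u$. A red star is very far from a red $K_{a,b,c}$: you still need $a-1$ more left-interval vertices with red edges to the same pools, red edges between the pool vertices themselves, and the correct interval ordering among them. You gesture at ``commit $u$ to role $A$ and recurse,'' but this is precisely the substantive part of the proof, and it is not clear it can be made to terminate or to charge correctly, since the new block's blue path is disconnected from the old one and the old path endpoint does not advance when the connection fails. Moreover, Painter can keep refusing connections while avoiding a red $K_{a,b,c}$ inside each fresh block, and your sketch gives no potential function showing the accumulated red stars force a red $K_{a,b,c}$ or bound the number of failed rounds. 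So the recursion is not a minor formalization detail; it is the missing idea. The two-path scheme in the paper sidesteps this entirely by making the failure branch (a blue $A$--$C$ edge) \emph{also} a progress branch, and I would point you there for the resolution.
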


It is certainly not the case that the interval chromatic number tells the whole story: for example, $r_o(P_k, P_n) = O(n)$ for any fixed $k$ even though $\chi_i(P_k) = k$. On the other hand, we do not have a linear upper bound even on $r_o(C_4, P_n)$. 

When we look at smaller and sparser graphs $G$, another transition emerges. Let $M_k$ denote the \emph{serial $k$-edge matching}: the ordered graph with $E(M_k) = \{v_1w_1, v_2w_2, \dots, v_k w_k\}$ and $v_1 < w_1 < v_2 < w_2 < \dots < v_k < w_k$. 

\begin{theorem}\label{thm:serial-matching}
For $n \ge 4$ and $k \ge 2$, $r_o(M_k, P_n) \le n + 2k-4$, and if $n \ge \max\{3k-3,2k+1 \}$, then $r_o(M_k, P_n) = n+2k-4$.
\end{theorem}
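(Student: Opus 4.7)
The plan is to prove the upper bound $r_o(M_k, P_n) \le n + 2k - 4$ by exhibiting a Builder strategy, and the matching lower bound (for $n \ge 3k - 3$) by exhibiting a Painter strategy.

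For the upper bound, Builder plays adaptively, maintaining a blue path $P$ and a collection $\mathcal{M}$ of pairwise-disjoint serially-ordered red edges (a partial $M_k$). At each turn, she plays an edge designed either to extend $P$ by one vertex (at its left endpoint, at its right endpoint, or via insertion of a new vertex strictly between two consecutive path vertices), or to add a new candidate matching edge disjoint from $\mathcal{M}$. A blue response grows $P$; a red response grows $\mathcal{M}$. Builder wins once $|V(P)| = n$ or $|\mathcal{M}| = k$. By carefully choosing queries, Builder ensures each new red is disjoint from prior reds, and a counting argument shows Builder wins within $n + 2k - 4$ moves: $n - 1$ moves suffice for the blue path extensions, and the matching setup (collecting $k - 1$ reds into $\mathcal{M}$ plus the bypass moves needed to maintain disjointness) costs $2k - 3$ additional moves.

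For the lower bound, Painter uses the natural strategy: color each edge blue unless doing so would create a blue $P_n$, in which case color red. Each forced red is tied to a $P_{n-1}$ in the blue graph on the cusp of becoming $P_n$. I would argue that, under the hypothesis $n \ge 3k - 3$, the forced reds must share endpoints with the long blue $P_{n-1}$ paths that induced them, so any serial matching of red edges has size at most $k - 1$. This lets Painter survive $n + 2k - 5$ turns without producing either a red $M_k$ or a blue $P_n$. The hypothesis $n \ge 3k - 3$ is precisely what makes this matching bound tight.

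The hardest part is the upper bound strategy, specifically handling Painter responses when both endpoints of $P$ become blocked by red edges from $\mathcal{M}$. Here Builder uses the \emph{insertion} technique: she queries an edge from a current path vertex $u$ to a fresh vertex $w$ with $u < w < u'$, where $u'$ is the next path vertex after $u$. A blue response then extends $P$ via a skipping edge through $w$, while a red response contributes a new edge to $\mathcal{M}$ (disjoint from the blocking endpoint reds, since $w$ is strictly interior to the path and therefore distinct from the red-edge vertices on either side). Verifying that the strategy exhausts within the budget $n + 2k - 4$ in all cases, and that for the lower bound Painter's forced reds really do form a matching of size at most $k - 1$, both require careful case analysis using the vertex-density guarantee on $\mathbb{N}$ noted before Theorem~\ref{thm:3-ichromatic}.
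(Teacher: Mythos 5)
Your proposal identifies the correct Painter strategy for the lower bound, and the right general flavor for the upper bound (maintain a blue path plus a growing red matching), but there are genuine gaps in both halves.

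\textbf{Upper bound.} Your accounting ``$n-1$ moves for blue path extensions $+$ $2k-3$ moves for the matching setup'' implicitly treats the two quantities as independent, but they are not: the red matching must sit to the \emph{right} of the blue path (since you need the serial ordering, and you cannot reuse the current endpoint of the path as a matching vertex indefinitely). When a move intended to extend the path at $v_x$ comes back red, Builder effectively loses that path vertex from the tracked blue $P$, because $v_x$ is now committed to the matching. So a red answer both grows the matching \emph{and} shrinks the usable blue path. Your counting has no mechanism to absorb this double effect, and without one the budget $n+2k-4$ is not justified. The paper handles this with a potential function: define the weight of state $(x,y)$ (blue $P_x$ followed by red $M_y$) to be $x+2y$, and observe that one well-chosen move always increases the weight by exactly $1$ (either $x$ goes up by $1$, or $x$ goes down by $1$ while $y$ goes up by $1$). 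Starting weight $1$ and an extra ``opening'' phase (laying the path from its left endpoint and absorbing the first red edge into a one-off weight-$2$ state) then gives $n+2k-4$. Also, your ``insertion technique'' of playing $uw$ with $u<w<u'$ does not extend the path: a blue $uw$ just replaces the edge $uu'$ with $uw$, which leaves the path length unchanged (and you have not queried $wu'$). That step as described does not do the work you attribute to it.

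\textbf{Lower bound.} You state the right Painter strategy (color blue unless a blue $P_n$ would be created), but the analysis you sketch does not lead anywhere. The claim ``any serial matching of red edges has size at most $k-1$'' is not what one needs to prove and is in fact false at the moment the game ends: the game ends precisely when Painter is forced to create a red $M_k$ or a blue $P_n$, and the lower bound is a statement that this takes at least $n+2k-4$ edges. The paper's argument is quite involved: it fixes a distinguished path $P^*$ (a $P_n$ or $P_{n+1}$ with one or two red edges), defines \emph{anchors} (vertices of $P^*$ not incident to its red edges) with a left/right labeling, classifies the $k$ red edges of the final graph as \emph{strong}, \emph{weak}, or \emph{weird} according to how their endpoints meet anchors, and then shows component-by-component that $G-P^*$ has at least as many blue as red edges. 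The hypothesis $n\ge 3k-3$ is used at a specific step to handle the case where a left anchor sits to the left of a right anchor. Your sketch contains none of this machinery, and the vague claim about forced reds ``sharing endpoints'' with long blue paths is not close to a proof. This part of the theorem is genuinely hard; ``careful case analysis'' is not a substitute for the anchor/strong-weak-weird decomposition or something of comparable depth.
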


On the other hand, we have stronger lower bounds on $r_o(G,P_n)$ even in the case of three two-edge graphs: the path $P_3$, the intersecting matching $X$, and the $2$-pronged claw $K_{1,2}$ (shown in Figure~\ref{fig:two-edge}).

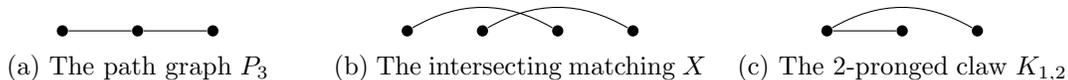
\begin{figure}[h!]
    \centering
    \begin{subfigure}{0.3\textwidth}
    \centering
    \begin{tikzpicture}
        \node [vtx] (0) at (0,0) {};
        \node [vtx] (1) at (1,0) {};
        \node [vtx] (2) at (2,0) {};
        \draw (0) -- (1) -- (2);
        \path (0) to [arch] (2); 
    \end{tikzpicture}
    \caption{The path graph $P_3$}
    \end{subfigure}
    \begin{subfigure}{0.3\textwidth}
    \centering
    \begin{tikzpicture}
        \node [vtx] (0) at (0,0) {};
        \node [vtx] (1) at (1,0) {};
        \node [vtx] (2) at (2,0) {};
        \node [vtx] (3) at (3,0) {};
        \draw (0) to [arch] (2) (1) to [arch] (3);
    \end{tikzpicture}
    \caption{The intersecting matching $X$}
    \end{subfigure}
    \begin{subfigure}{0.3\textwidth}
    \centering
    \begin{tikzpicture}
        \node [vtx] (0) at (0,0) {};
        \node [vtx] (1) at (1,0) {};
        \node [vtx] (2) at (2,0) {};
        \draw (0) -- (1) (0) to [arch] (2);
    \end{tikzpicture}
    \caption{The $2$-pronged claw $K_{1,2}$}
    \end{subfigure}
    \caption{The three two-edge graphs of Theorem~\ref{thm:bad-small-graphs}}
    \label{fig:two-edge}
\end{figure}

\newpage
\begin{theorem}\label{thm:bad-small-graphs}
The following bounds hold:
\begin{enumerate}
    \item[(a)] $2n-2 \le r_o(P_3, P_n) \le \frac{8}{3}n - \frac{10}{3}$;
    \item[(b)] $\frac32 n -\frac32< r_o(X, P_n) \leq \frac32n + 2$; 
    \item[(c)] $r_o(K_{1,2}, P_n) = 2n$ (and by symmetry, $r_o(K_{2,1}, P_n) = 2n$).
\end{enumerate}
\end{theorem}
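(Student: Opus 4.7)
The plan is to prove each of the three parts by exhibiting matching Builder and Painter strategies and analyzing their costs; the three red targets impose different structural constraints on Painter's red graph, which drive the different bounds.

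For part (c), Builder's upper-bound strategy is to grow a monotone blue path one vertex at a time by a ``probe and extend'' procedure. Let $u$ be the current right endpoint of the blue path; Builder draws an edge $(u,w)$ to a fresh vertex $w>u$, and if Painter colors it red, Builder draws a second edge $(u,w')$ to another fresh vertex, which Painter is forced to color blue to avoid a red $K_{1,2}$ centered at $u$. Each extension costs at most two moves. For the matching lower bound, Painter's strategy is dual: color the first out-edge of every vertex red, and every later out-edge blue. This plainly avoids red $K_{1,2}$ (each vertex has at most one red out-edge), and it forces every blue path edge to be preceded by a red ``probe'' from its left endpoint, so each new blue path edge costs Builder two moves.

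For part (a), Painter's lower-bound strategy maintains a partition of the vertices she has seen into ``red sources'' (which may have red out-edges but no red in-edges) and ``red sinks'' (the reverse), declaring each vertex lazily on its first touch and coloring an edge red whenever the labels are compatible (blue otherwise); this avoids red $P_3$ since no vertex is both source and sink, and a careful count of the edges Painter is forced to color blue gives the $2n-2$ bound. For the upper bound $\frac{8}{3}n-\frac{10}{3}$, Builder uses a phased strategy that produces three new blue path edges per eight moves on average, giving an amortized cost of $\frac{8}{3}$ moves per blue edge; the phase is designed so that after a few probes, Painter's source/sink commitments leave her unable to block enough further extensions without losing.

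For part (b), Painter's lower-bound strategy colors an edge red whenever doing so keeps the red edges pairwise non-crossing, and blue otherwise; this automatically forbids red $X$, and tracking the non-crossing red structure shows that blue edges accumulate slowly enough to give the $\frac{3}{2}n-\frac{3}{2}$ lower bound. The upper bound $\frac{5}{3}n+10$ uses a phased Builder strategy that, in each phase, forces Painter either to create a crossing (losing) or to color a path-extending edge blue, yielding an amortized cost of $\frac{5}{3}$ moves per blue path edge. The main technical obstacles will be the upper-bound arguments for (a) and (b): designing Builder's phased strategies and proving they achieve the claimed amortized savings requires a careful case analysis of Painter's responses within each phase, together with some bookkeeping to absorb the constant overhead from the first and last phases, and this is where the bulk of the proof will lie.
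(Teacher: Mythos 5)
Your approach to part (c) matches the paper's: grow a blue path one vertex at a time by probing from the right endpoint, with Painter forced to give a blue edge after at most one red reply, and the dual Painter strategy (color the first out-edge of each vertex red) for the lower bound. One small point: this argument gives $r_o(K_{1,2},P_n)=2(n-1)$, not $2n$; the paper's Lemma 4.3 in fact proves $r_o(K_{1,k},P_n)=k(n-1)$, so the ``$2n$'' in the theorem statement itself appears to be an off-by-two slip rather than an error in your proposal.

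For the lower bounds of (a) and (b), your Painter strategies are in the same spirit as the paper's (color red unless this creates the forbidden red graph). For (b) this works out much as you suggest: every blue edge of the resulting blue $P_n$ must cross a red edge, and a red edge crosses at most two consecutive edges of a monotone path, so at least $(n-1)/2$ red edges are needed in addition to $n-1$ blue ones. For (a), however, you stop at ``a careful count gives $2n-2$'' without giving the key structural observation the paper relies on: each blue edge of the final $P_n$ is left-forced (its left endpoint already has a red in-edge) or right-forced (its right endpoint already has a red out-edge), and no right-forced edge can be followed by a left-forced one along the path (that would yield a red $P_3$), so the path splits into a left-forced prefix followed by a right-forced suffix, and the $n-1$ forcing red edges are pairwise distinct. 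Without this observation the count does not close, so as written the lower bound for (a) has a real gap. Also, be careful about ``declaring each vertex lazily on its first touch'': Painter must not commit to a source/sink label before a red edge forces it, or Builder can exploit premature commitments; the label should emerge from the greedy ``red unless it makes a $P_3$'' rule rather than be assigned at first contact.

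The most serious gap is the upper bound in (a). You propose a locally amortized strategy producing ``three new blue path edges per eight moves,'' but there is no constant-size block of moves that forces a blue path extension against $P_3$: Painter can color arbitrarily many probes $pq_1, pq_2, \dots$ from a fixed endpoint $p$ red without ever creating a red $P_3$ (this only builds a red star, not a path). The paper instead uses a \emph{global} three-phase ``claw strategy'': phase one plays $n-1$ probes from the growing right endpoint, yielding a blue path of some Painter-chosen length $k$ together with $n-k$ ``leftover'' vertices each carrying a red in-edge; phase two does the same in mirror image far to the right; phase three threads a blue path through whichever side is shorter, exploiting that an edge into a leftover vertex from the correct direction must be blue. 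The constant $\frac{10}{3}$ in $\frac{8}{3}n-\frac{10}{3}$ comes from the case split at the threshold $k\approx n/3$, not from an amortized $8/3$ per blue edge (note $\frac{8}{3}(n-1)=\frac{8}{3}n-\frac{8}{3}\ne\frac{8}{3}n-\frac{10}{3}$). Your sketch would need to be replaced by something of this global flavor. For (b) the paper does use a state-based/amortized scheme of the kind you describe, maintaining the invariant that after reaching a blue $P_k$ at most $\frac{5}{3}(k-1)$ edges have been played, but the heart of that proof is a five-way case analysis on Painter's replies which you have deferred entirely; that case analysis is genuinely where the $\frac53$ comes from, so the proposal as written does not yet constitute a proof of (b) either.
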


In particular, $r_o(G, P_n) \ge \frac32n$ for any $G$ that contains any of $P_3$, $X$, $K_{1,2}$, or $K_{2,1}$ as a subgraph. If $G$ does not contain any of $P_3$, $K_{1,2}$, and $K_{2,1}$, then $G$ must be a matching (possibly with isolated vertices, which don't affect $r_o(G,P_n)$). We say that an ordered matching $M$ is \emph{intersection-free} if it also does not contain $X$.

We conjecture that having one of the two-edge graphs in Theorem~\ref{thm:bad-small-graphs} as a subgraph is the only reason why a bound of the form $r_o(G,P_n) = n + O(1)$ would not hold.

\begin{conjecture}\label{st-ives-conjecture}
For every intersection-free matching $M$, there is a constant $c$ such that $r_o(M, P_n) \le n + c$.
\end{conjecture}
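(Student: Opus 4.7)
The plan is to proceed by strong induction on $|E(M)|$, with Theorem~\ref{thm:serial-matching} as the base case whenever $M$ is a serial matching $M_k$ (and the trivial cases $|E(M)| \le 1$). For a general intersection-free matching $M$, its laminar structure (any two edges, viewed as intervals, are either disjoint or nested) yields a canonical decomposition into outermost edges $e_1 < e_2 < \cdots < e_t$ together with sub-matchings $N_1, \ldots, N_t$, where $N_i$ is the portion of $M$ strictly nested inside $e_i$. Each $N_i$ is intersection-free with strictly fewer edges than $M$, so by induction there exist constants $c_i$ with $r_o(N_i, P_n) \le n + c_i$ for all $n$.

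Builder's strategy is to interleave path-building with $M$-building in a single left-to-right pass. Builder maintains a current blue-path endpoint $u$ and processes $e_1, \ldots, e_t$ in order. When attempting $e_i$, Builder plays $uv$ with $v$ far enough to the right that $(u, v)$ contains enough room to run the inductive strategy for $N_i$. If $uv$ is blue, it serves as a blue path edge jumping from $u$ to $v$ (the intermediate vertices are simply unused), and Builder proceeds to $e_{i+1}$ with new endpoint $v$. If $uv$ is red, Builder commits $uv$ as a red $e_i$ and invokes the inductive Builder strategy for $N_i$ restricted to $(u, v)$, with the understanding that blue edges produced inside continue to extend the shared blue path. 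When the recursion has produced $N_i$ in red, Builder resumes outer play with endpoint at the right end of the interior sub-path.

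The main technical obstacle is to ensure that at most $O(1)$ edges are ever ``wasted'' (i.e., contribute neither to the blue path nor to the red $M$). The delicate issue arises at the first edge inside a recursive call: if Builder plays $uw$ with $w \in (u, v)$ and Painter responds red, then $uw$ shares the endpoint $u$ with the committed red $e_i$ and cannot serve as an edge of $N_i$. To mitigate this, Builder can either reinterpret $uw$ as a smaller outer edge (replacing $e_i := uw$ and recursing inside $(u, w)$), or reserve a fresh bridging vertex $u' \in (u, v)$ with a single explicit blue attempt; each option costs only a constant number of extra edges per level of the recursion. One also needs to strengthen the inductive hypothesis so that the Builder strategy for each $N_i$ can be run ``inside an interval'' starting from a prescribed left endpoint and contribute its blue edges to an external path; this is compatible with the same bound up to a constant. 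Making the accounting rigorous so that the total overhead $c$ depends only on $|V(M)|$, and not on $n$, is the core technical content, with $c$ plausibly growing linearly in $|V(M)|$, in line with Theorem~\ref{thm:serial-matching}.
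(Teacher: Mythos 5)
This statement is Conjecture~\ref{st-ives-conjecture}, which the paper leaves open: it is proved only for the partial St.~Ives matchings $S_k'$ (Theorem~\ref{thm:partial-st-ives}), and the authors explicitly remark that already three consecutive copies of $S_k'$ are beyond their method. There is no paper proof to compare against, so a complete argument here would be a genuinely new result.

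Your laminar-induction sketch is the natural line of attack and resembles the paper's partial result in spirit, but the gap you flag as ``the core technical content'' is exactly where the paper's authors also stall, and it is not mere bookkeeping. First, the inductive hypothesis $r_o(N_i, P_n) \le n + c_i$ is too weak to compose: a single recursive call may spend up to $n + c_i$ moves, commensurate with the entire budget, so naive composition gives $\Theta(tn)$ rather than $n + O(1)$. You need a self-contained state-machine invariant along the lines the paper uses for $S_k'$, which tracks a blue $P_a$ together with a red $S_k'[b]$ and a prescribed spatial relationship between them, then proves every transition costs $O_k(1)$ moves while decreasing $a$ by at most $O_k(1)$. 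Second, the re-anchoring step is not obviously $O(1)$: after committing $uv$ as red $e_i$, any subsequent edge $uw$ with $w \in (u,v)$ that Painter colors red gives a red $K_{1,2}$ at $u$, which is useless for a matching $M$, and your fallback of ``reinterpreting $uw$ as a smaller outer edge'' sets up an unbounded regress, since Painter can keep coloring $uw', uw'', \ldots$ red. The paper breaks this regress by paying a Ramsey clique of size $r\bigl((k+1)^3, 2(k+1)^2\bigr)$ inside the interval to guarantee a fresh long blue segment, followed by a single splice edge; verifying that the bookkeeping for these splices composes over all levels of a laminar tree is exactly where the paper's technique succeeds for $S_k'$ and fails to generalize. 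Your proposal does not resolve that.
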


We can rephrase Conjecture~\ref{st-ives-conjecture} in terms of a more concrete sequence of Ramsey problems. Let the \emph{St.~Ives matching}\footnote{We named this ordered matching after the town St.~Ives from the rhyme ``As I was going to St.~Ives,'' which, in an early version, first appeared in 1730 in a manuscript by Harley.} $S_k$ be the ordered matching defined recursively as follows. $S_0$ is simply an ordered edge. $S_k$ is constructed from two disjoint consecutive copies of $S_{k-1}$ by adding an edge $u_kv_k$ such that both copies of $S_{k-1}$ are to the right of $u_k$ and to the left of $v_k$. A diagram of $S_2$ is shown in Figure~\ref{subfig:s2}.

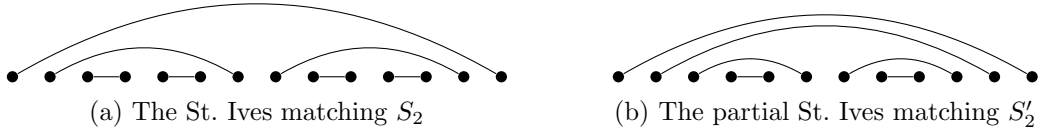
\begin{figure}[h]
    \centering
    \begin{subfigure}{0.45\textwidth}
    \centering
    \begin{tikzpicture}[scale=0.5]
		\foreach \i in {1,...,14} {\node [vtx] (\i) at (\i,0) {};}
		\draw (3) -- (4) (5) -- (6) (9) -- (10) (11) -- (12);
		\draw (2) to [arch] (7) (8) to [arch] (13) (1) to [arch] (14);
    \end{tikzpicture}
    \caption{The St.~Ives matching $S_2$}
	\label{subfig:s2}
    \end{subfigure}
    \begin{subfigure}{0.45\textwidth}
    \centering
    \begin{tikzpicture}[scale=0.5]
		\foreach \i in {1,...,12} {\node [vtx] (\i) at (\i,0) {};}
		\draw (4) -- (5) (8) -- (9);
		\draw (3) to [arch] (6) (7) to [arch] (10) (2) to [arch] (11) (1) to [arch] (12);
    \end{tikzpicture}
    \caption{The partial St.~Ives matching $S_2'$}
	\label{subfig:s2'}
    \end{subfigure}
    \caption{Examples of the St.~Ives matching and the partial St.~Ives matching}
    \label{fig:st-ives}
\end{figure}

Every intersection-free matching $M$ on $k$ edges is a subgraph of $S_k$, which can be shown recursively. If $M$ splits into two matchings $M'$, $M''$ where every vertex of $M'$ lies before every vertex of $M''$, we can find $M$ and $M'$ within the two different copies of $S_{k-1}$ inside $S_k$. If $M$ does not split in this way, then it has an edge $vw$ where $v$ is the leftmost vertex of $M$ and $w$ is the rightmost vertex. Then we can find a copy of $M-vw$ inside one copy of $S_{k-1}$ inside $S_k$; together with edge $u_kv_k$ of $S_k$, it becomes a subgraph isomorphic to $M$. 

As a result, Conjecture~\ref{st-ives-conjecture} is equivalent to the following statement.

\begin{conjecture}\label{st-ives-conjecture-v2}
For all $k \ge 1$, there is a constant $c_k$ such that for all $n\ge 1$, $r_o(S_k, P_n) \le n + c_k$.
\end{conjecture}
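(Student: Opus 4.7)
The plan is to argue by induction on $k$, strengthening the statement along the way so that partial progress toward blue $P_n$ is tracked throughout the game, not just at its end. Specifically, I would aim for the following online version: there is a Builder strategy $\mathcal{S}_k$ and a constant $c_k$ such that, starting from any designated vertex $p$, at every point in the game either red $S_k$ has been created or the blue edges already contain a monotone path of length at least $t - c_k$ starting at $p$, where $t$ is the current number of turns. Running $\mathcal{S}_k$ for $n + c_k$ turns then delivers $r_o(S_k, P_n) \le n + c_k$.

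The base case $k = 1$ is itself nontrivial and I would treat it as a warm-up by combining Theorem~\ref{thm:serial-matching} with a reservation scheme. Builder reserves a left-anchor vertex $u^\star$ before starting and then runs the $M_2$-strategy to accumulate a red $M_2$; upon success, Builder draws an enclosing edge from $u^\star$ to a fresh vertex $v^\star$ beyond the matching. A red enclosing edge completes $S_1$, while a blue enclosing edge is absorbed by re-anchoring the subsequent blue-path construction at $v^\star$, the aim being to show that only $O(1)$ such resets can occur before either $S_1$ is completed or the main blue path reaches length $n$.

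For the inductive step, assuming the enhanced lemma for $k-1$ with constant $c_{k-1}$, Builder's strategy $\mathcal{S}_k$ alternates between two modes. In \emph{extend mode}, Builder invokes $\mathcal{S}_{k-1}$ starting at the current blue right-endpoint, restricted to fresh vertices to the right of everything previously used; each invocation either pushes the blue path further right or produces a red $S_{k-1}$ contained in a fresh serial window. In \emph{enclose mode}, triggered as soon as two such red $S_{k-1}$'s exist in serial order, Builder draws an enclosing edge from the pre-reserved anchor $u^\star$ to a fresh vertex beyond the second red $S_{k-1}$. A red outcome completes $S_k$, and a blue outcome must be absorbed into the budget, so the delicate part is to show this absorption happens only $O_k(1)$ times.

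The main obstacle is precisely this absorption: a blue enclosing edge from a far-left anchor to a far-right vertex does not by itself extend the blue monotone path from $u^\star$ beyond length $2$, so naively each failure is a wasted edge. To bound the number of failed enclose attempts one approach is a potential-function argument: show that each failure forces Painter into a coloring regime in which subsequent $\mathcal{S}_{k-1}$ invocations produce red $S_{k-1}$'s faster, so that after $O(c_k)$ failures either a red enclosing edge is forced through a different vertex configuration or the blue path reaches length $n$ on its own. An alternative — likely cleaner — route is to design the layout so that $u^\star$ and each candidate $v^\star$ actually lie \emph{on} the intended blue path rather than off to the sides, making a blue enclosing edge genuinely useful as a long forward step. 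Either way, the crux of the proof is pinning down that the overhead $c_k$ depends only on $k$ (and in particular on $c_{k-1}$ via a controlled recurrence) and does not drift with $n$; this amortized accounting is what I expect to be the principal technical hurdle.
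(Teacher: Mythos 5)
The statement you are trying to prove is explicitly labeled a \emph{conjecture} in the paper, and the authors do not prove it. They prove the weaker Theorem~\ref{thm:partial-st-ives}, which replaces the full St.~Ives matching $S_k$ with the more rigid ``partial'' matching $S_k'$ (a nested $k$-matching with $k$ consecutive nested $k$-matchings threaded inside it), and they state plainly that the conjecture remains open already for the graph made of three consecutive copies of $S_k'$ --- in particular, for $S_k$ itself. So there is no paper proof for your attempt to match, and you should be suspicious of any argument that seems to settle the full conjecture.

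With that said, your outline does not in fact settle it, and you have correctly identified exactly where it stalls. The ``enclose mode'' step is the heart of the problem: when Builder plays the candidate enclosing edge $u^\star v^\star$ over two serial red copies of $S_{k-1}$ and Painter colors it blue, you gain only a blue $P_2$ while having spent a constant's worth of turns producing red structure you must now partly abandon. Your two proposed escapes --- a potential-function argument showing ``$O(1)$ resets suffice,'' and a layout where $u^\star, v^\star$ sit on the intended blue path --- are precisely the things that would constitute the proof, and neither is carried out. The second idea is essentially what the paper does for $S_k'$: there, Builder keeps the tail of the blue path between the endpoints of an innermost red edge (Claims~\ref{partialstivesc1}--\ref{partialstivesc3}), so a blue outcome on a new probe extends the path rather than wasting it. That works because $S_k'$ has a single nesting axis and the ``inside'' region contains consecutive, linearly ordered nested blocks that the blue path can thread past. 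The recursive definition of $S_k$ breaks this: $S_k$ requires \emph{two disjoint consecutive} copies of $S_{k-1}$ under one enclosing edge, and after the blue path threads past the first copy, there is no obvious way to keep building a second red $S_{k-1}$ in the territory ahead of the path while still maintaining the invariant that the current tail of the path is poised inside a future enclosing edge. This is not a bookkeeping issue you can amortize away with a vague recurrence on $c_k$; it is a structural obstacle, and naming a candidate potential function (or the re-anchoring scheme) without verifying that a single enclose failure strictly increases it by a $k$-dependent amount leaves the argument incomplete. Your base case $k=1$ already exhibits the gap in miniature: you assert ``only $O(1)$ resets can occur'' without a mechanism forcing Painter to eventually color an enclosing edge red; absent such a mechanism, Painter can color every enclosing probe blue indefinitely, and each reset discards an unbounded amount of blue progress.

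If you want to salvage something provable from this approach, the realistic target is the paper's Theorem~\ref{thm:partial-st-ives}: restrict to $S_k'$, define states $(a,b)$ that track a blue $P_a$ together with a red partial structure of ``size'' $b$, and show that each move increases $a$ or, in $O_k(1)$ moves, increases $b$ while decreasing $a$ by at most $O_k(1)$. The Ramsey-number cliques $K_{r((k+1)^3, 2(k+1)^2)}$ the paper plays between anchor vertices are what makes the ``red'' outcome productive at constant cost; your sketch has no analogue of this step, which is another concrete thing missing.
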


In partial support of Conjecture~\ref{st-ives-conjecture-v2}, we show it for a less general sequence of graphs. First, let the \emph{nested matching} $N_k$ be the ordered graph with vertices $v_1 < \dots < v_k < w_k < \dots < w_1$ and edges $v_1w_1, \dots, v_kw_k$. The \emph{partial St.~Ives matching} $S_k'$ is the ordered matching obtained from $N_k$ by adding $k$ more consecutive copies of $N_k$ between vertices $v_k$ and $w_k$. A diagram of $S_2'$ is shown in Figure~\ref{subfig:s2'}. 

\begin{theorem}\label{thm:partial-st-ives}
For all $k \ge 1$, there is a constant $c_k$ such that for all $n\ge 1$, $r_o(S_k', P_n) \le n + c_k$.
\end{theorem}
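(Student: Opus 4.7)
The plan is to prove Theorem~\ref{thm:partial-st-ives} by designing an explicit Builder strategy that exploits the recursive structure of $S_k'$: an outer nested matching $N_k$ together with $k$ consecutive inner copies of $N_k$.

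The key ingredient is an auxiliary lemma: for every $k \ge 1$, there is a constant $d_k$ such that, for every $n \ge 1$ and every sufficiently large vertex interval $I$, Builder has a strategy on vertex set $I$ that either completes a blue $P_n$ or produces a red $N_k$ with all edges inside $I$, using at most $n - 1 + d_k$ moves. I would prove this lemma by induction on $k$. The base case $k = 1$ is immediate: Builder extends the blue path along consecutive vertices of $I$, and the first red edge is a copy of $N_1$, so $d_1 = 0$. For the inductive step, Builder picks a witness vertex $R$ near the right end of $I$ and queries the spanning edge from the current blue path endpoint $x$ to $R$. If the query is blue, the path extends to $R$ and Builder iterates; if red, Builder has the outer edge of the desired $N_k$ and recursively invokes the $N_{k-1}$-forcing strategy on the interval $(x, R) \cap \mathbb{N}$. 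A small number of ``wasted'' red edges may accrue per recursion level --- for instance, from an initial red connection attempt from $x$ into the recursive interval --- but a careful design keeps the waste bounded by a constant per level, yielding $d_k \le d_{k-1} + O(1)$ and hence $d_k = O(k)$.

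Given the auxiliary lemma, Theorem~\ref{thm:partial-st-ives} follows by running the $N_k$-forcing strategy $k + 1$ times, sharing the blue path across all invocations. The first invocation forces a red outer $N_k$ whose outer edge spans some interval $J$, or it completes $P_n$ and Builder wins. Otherwise, Builder partitions the interior of $J$ into $k$ disjoint sub-intervals $J_1 < J_2 < \cdots < J_k$ and invokes the $N_k$-forcing strategy in each sub-interval in sequence. Each invocation either completes the shared blue $P_n$ or produces another $N_k$ in its sub-interval. If all $k + 1$ invocations produce copies of $N_k$, the combined red edges form exactly a copy of $S_k'$: one outer $N_k$ with $k$ serial $N_k$s inside. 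The total move count is bounded by $(n - 1) + (k + 1)(d_k + O(1))$, yielding $c_k = O(k^2)$.

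The main technical obstacle is the inductive step of the auxiliary lemma: designing the recursive $N_k$-forcing subroutine so that the number of wasted red edges per recursion level is bounded by a constant. The subtle point is that after Builder establishes the outer red edge $(x, R)$, any further red edges incident to $x$ share the vertex $x$ with the outer edge and therefore fail to nest with it. Builder must either limit the number of such ``connecting'' red edges to a constant per level, or cleanly abandon $x$ and start a fresh blue sub-path inside $(x, R)$ without sacrificing a constant fraction of the accumulated main path. Balancing these considerations --- so that each recursion level adds only $O(1)$ to the waste budget while still allowing the blue path to grow monotonically across the entire game --- is the heart of the argument.
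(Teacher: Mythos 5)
Your proposal takes a genuinely different route from the paper. The paper builds $S_k'$ edge-by-edge via a state machine: it tracks a pair $(a,b)$ where $a$ is the blue path length and $b$ counts red edges placed toward $S_k'$, and it proves that every move either increments $a$ by $1$ or, at a bounded cost (using a Ramsey clique), increments $b$. You instead propose a bottom-up recursion---an auxiliary ``$N_k$-forcing'' lemma proved by induction on $k$, then composed $k+1$ times. The decompositions are different, but more importantly, your proof has a gap precisely at the step you flag as ``the heart of the argument.''

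The gap is the inductive step of the auxiliary lemma, and it is not a matter of ``careful design'' of constants; a new idea is needed. After Builder forces the outer red edge $(x,R)$, you need the blue path to keep growing inside $(x,R)$ while the red $N_{k-1}$ is being built there. If Builder extends from the current endpoint $x$ by playing $(x,y)$ with $y \in (x,R)$, Painter can color it red, and a red edge sharing the vertex $x$ does not nest with $(x,R)$; it is pure waste. Painter can do this indefinitely, so there is no constant bound on the number of such wasted moves per level---the waste budget is under Painter's control, not Builder's. Alternatively, if Builder abandons $x$ and starts a fresh blue path inside $(x,R)$, it is unclear how to re-attach it to the old path later without paying another sequence of reconnection attempts that Painter can keep coloring red. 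The paper resolves this tension with two specific moves you don't have: (1) Builder plays a whole Ramsey clique of size $r\bigl((k+1)^3, 2(k+1)^2\bigr)$ inside the interval---a cost that is constant in $n$---which yields either a red clique containing $S_k'$ (immediate win) or a fresh blue path of length $(k+1)^3$; and (2) Builder plays a \emph{single} reconnection edge from the old path to the new one, and the decisive observation is that if Painter colors this reconnection edge red, that very edge becomes the next edge $e_{b+1}$ of the red nested structure. In other words, a failed reconnection \emph{is} progress. Without that observation, the waste is unbounded, and your bound $d_k \le d_{k-1} + O(1)$ does not follow.

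There is also a secondary issue in the composition step. The $k$ inner copies of $N_k$ in $S_k'$ must lie between $v_k$ and $w_k$, the endpoints of the \emph{innermost} edge of the outer $N_k$, not merely inside the span of the outermost edge. But the auxiliary lemma as you state it gives no control over where inside $I$ the forced $N_k$ sits, nor over where the blue path ends relative to that $N_k$'s innermost edge. ``Sharing the blue path across all $k+1$ invocations'' therefore requires exactly the positional bookkeeping the paper carries out (the invariant that the last $(k+1)^3 - bk$ vertices of the blue path lie between the endpoints of $e_b$), and your sketch does not supply a substitute. Without it, the move count is not $(n-1) + (k+1)\cdot O(1)$ but could degenerate to $\Omega(kn)$ if each invocation must rebuild a long path from scratch.
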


In fact, Builder's strategy in the proof of Theorem~\ref{thm:partial-st-ives} can be repeated to extend a path in two directions, obtaining either a blue $P_n$ or two consecutive copies of $S_k'$. The graph consisting of three consecutive copies of $S_k'$ is the simplest matching for which we are unable to prove that Conjecture~\ref{st-ives-conjecture} holds.

All of the results above are bounds on $r_o(G, P_n)$. However, when we replace $P_n$ by the ordered cycle $C_n$, the same classification into $O(n\log_2n)$ and $O(n)$ upper bounds holds, due to the following result.

\begin{theorem}\label{thm:path-to-cycle}
    For any ordered graph $G$, there is a constant $c$ such that $r_o(G, C_n) \le r_o(G, P_m) + c$, where $m = (n-1)(|V(G)|-1) + 1$.
\end{theorem}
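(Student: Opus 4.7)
The plan is a two-phase Builder strategy. In phase one, Builder executes the optimal strategy for the $r_o(G, P_m)$ game with $m = (n-1)(k-1) + 1$ and $k = |V(G)|$, spending at most $r_o(G,P_m)$ moves. Either a red copy of $G$ appears (and Builder wins), or Painter has built a blue $P_m$ on vertices $u_1 < u_2 < \dots < u_m$. In phase two, Builder uses $c = c(G)$ additional moves, depending only on $G$, to obtain either a blue $C_n$ or a red $G$.

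For phase two, designate the marker vertices $w_j := u_{1 + (j-1)(n-1)}$ for $j = 1, \dots, k$. Between consecutive markers $w_j$ and $w_{j+1}$ the blue $P_m$ contains exactly $n$ vertices, so a blue chord $w_j w_{j+1}$ immediately closes a blue $C_n$. Builder queries all $\binom{k}{2}$ pairs among the markers. Three cases arise. If every query returns red, the markers induce a red $K_k$ which contains $G$. If some consecutive chord $w_j w_{j+1}$ is blue, a blue $C_n$ is obtained. Otherwise the \emph{hard case} holds: every $w_j w_{j+1}$ is red, but some non-consecutive pair $w_{j_0} w_{l_0}$ with $d := l_0 - j_0 \geq 2$ is blue. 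A short ordered-graph check shows that the resulting blue theta-subgraph (the blue chord together with the blue sub-path between its endpoints) does not by itself contain an ordered $C_n$ for $n \geq 3$, so additional play is required.

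The plan in the hard case is to induct on $k$. Using the blue chord as a shortcut produces a blue path $u_1, \dots, u_{i_{j_0}}, u_{i_{l_0}}, \dots, u_m$ of length at least $(n-1)(k - d - 1) + 2$, long enough for the phase-two strategy with parameter $k' = k - d \leq k - 2$. The inductive hypothesis yields a blue $C_n$ or a red $K_{k-d}$ in at most $c_{k-d}$ additional queries, giving the recurrence $c_k \leq \binom{k}{2} + c_{k-2}$ and hence $c_k = O(k^3)$. The main obstacle is the combination step: one must show that the recursive red $K_{k-d}$, arranged (by choice of recursive markers) to include as many of the original markers as possible, together with the red structure guaranteed by the minimality of $d$ (namely, all marker-pairs at index-distance less than $d$ are red), produces a red copy of $G$. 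Because the blue chord $w_{j_0} w_{l_0}$ survives as the one ``missing'' edge in the combined red structure, this step requires a case analysis on whether $G$ contains an edge corresponding to $v_{j_0} v_{l_0}$, and possibly a bounded number of additional targeted queries when it does; the small base cases ($k \leq 3$), where the recursion $k' = k - d$ degenerates, must also be handled directly.
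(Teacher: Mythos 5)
Your phase-one setup and the idea of placing $k$ evenly-spaced markers on the blue $P_m$ and querying the $G$-edges (or all pairs) among them coincide with the paper's opening move, but from that point the strategies diverge sharply, and the recursive combination step you flag as ``the main obstacle'' is a genuine gap that I don't think can be closed the way you sketch.

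Here is the difficulty concretely. Your recursion bottoms out producing a red $K_{k'}$ for a small $k'$ (possibly $k' \le 2$) on the new markers of the shortened path. Only the first $j_0$ of those new markers coincide with original markers; after the blue chord $w_{j_0} w_{l_0}$, the subsequent new markers on the shortened path are offset by $d(n-1)-1$ positions and are therefore \emph{not} original markers, and their edges to the skipped vertices $w_{j_0+1}, \dots, w_{l_0}$ were never drawn. So the recursive red clique and the ``outer'' red structure live on vertex sets that barely overlap, and there is no reason they glue into a red $K_k$, or even into $G$. The suggestion of ``a bounded number of additional targeted queries'' to patch this is circular: Painter can answer any such query with blue, generating yet another blue chord and forcing another recursion, and you have offered no argument that this terminates in $O(1)$ moves. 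The situation is worse when $d \ge 3$, where the minimality of $d$ guarantees only short-range red structure and several long-range marker pairs may already be blue, so clique reconstruction is hopeless. Even the seemingly friendly case $d = k-1$ (a single blue chord $w_1 w_k$, all others red) fails whenever $G$ contains the edge $v_1 v_k$, and your case analysis does not say what Builder does then.

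The paper avoids this whole difficulty with a different idea. As soon as some chord $w_i w_j$ with $j - i \ge 2$ is blue and some other marker edge is red, Builder already owns a blue ordered cycle of length between $2n-1$ and roughly $kn$; there is no attempt to recover a red clique among the markers. Builder then repeatedly draws \emph{scaled} copies of $G$ using vertices of this long blue cycle. Painter must give at least one blue edge per copy (else a red $G$ appears), and each blue edge is a chord of the cycle that Builder can use to shorten it. The heart of the proof is a careful three-stage control of the cycle's length: first bring it down from linear slack to a $(1+\epsilon)$ factor of $n$, then down to $n + O_k(1)$, and finally (via a divisibility argument using blue chords of two coprime-ish lengths) exactly to $n$. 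Each stage costs $O_k(1)$ moves. This ``build a long blue cycle and shorten it'' mechanism is the key idea your proposal is missing, and it is what makes the additive constant work out.
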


All of our bounds are proved by giving deterministic strategies for Builder and Painter.

The remainder of the paper is organized as follows. In Section~\ref{section:general}, we prove $O(n\log_2n)$ bounds on $r_o(G,P_n)$ and $r_o(G,C_n)$ that apply to any ordered graph $G$: Theorem~\ref{thm:left-degree} and Theorem~\ref{thm:path-to-cycle}. In Section~\ref{section:3-ichromatic}, we prove Theorem~\ref{thm:3-ichromatic}, which applies to $3$-ichromatic graphs $G$, for which we can prove $O(n)$ bounds. In Section~\ref{section:small-graphs}, we prove the three cases of Theorem~\ref{thm:bad-small-graphs}, dealing with small graphs for which we can still do no better than $O(n)$. Finally, in Section~\ref{section:n+O(1)}, we prove Theorem~\ref{thm:serial-matching} and Theorem~\ref{thm:partial-st-ives}: two cases in which $r_o(G,P_n) = n + O(1)$.
\section[Results for general G]{Results for general $G$}
\label{section:general}

\subsection[An O(n log n) bound]{An $O(n\log_2n)$ bound}

\begin{proof}[Proof of Theorem~\ref{thm:left-degree}]
Builder maintains a list of graphs $G_1, G_2, \dots, G_{n-1}$, where:
\begin{itemize}
\item Each $G_i$ is a monochromatic red subgraph of the graph built so far.
\item Each $G_i$ is isomorphic to an ``initial subgraph of $G$'': the subgraph obtained by taking the leftmost $v(G_i)$ vertices of $G$.
\item Each vertex of $G_i$ is the rightmost endpoint of a blue $P_i$.
\end{itemize}
Initially, every $G_i$ is the null graph (with $0$ vertices). Figure~\ref{fig:left-degree-graphs} shows an example of an intermediate stage of Builder's strategy, with graphs $G_1, G_2, G_3, G_4$ which are all subgraphs of $G = K_4$.

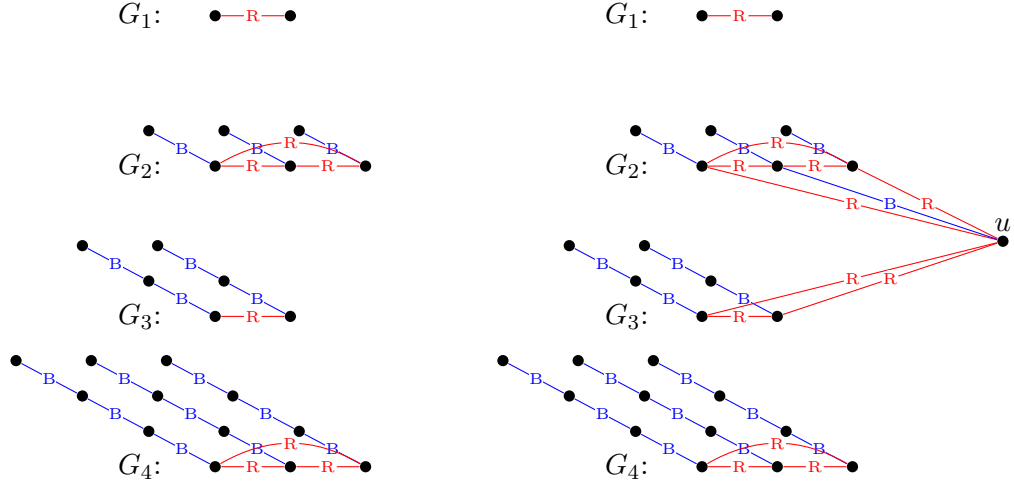
\begin{figure}
	\centering
	\begin{subfigure}{0.45\textwidth}
	\centering
	\begin{tikzpicture}
		\foreach \i in {0,1,2} {
			\foreach \j in {0,1,2,3} {
				\node [vtx] (a\i\j) at (\i-15/17*\j, 8/17*\j) {};
			}
			\foreach \j [count=\jcount] in {0,1,2} {
				\draw [blue] (a\i\jcount) to node[blabel]{} (a\i\j);
			} 
		}
		\foreach \i in {0,1} {
			\foreach \j in {0,1,2} {
				\node [vtx] (b\i\j) at (\i-15/17*\j, 8/17*\j+2) {};
			}
			\foreach \j [count=\jcount] in {0,1} {
				\draw [blue] (b\i\jcount) to node[blabel]{} (b\i\j);
			} 
		}
		\foreach \i in {0,1,2} {
			\foreach \j in {0,1} {
				\node [vtx] (c\i\j) at (\i-15/17*\j, 8/17*\j+4) {};
			}
			\draw [blue] (c\i1) to node[blabel]{} (c\i0);
		}
		\node [vtx] (d0) at (0,6) {}; \node [vtx] (d1) at (1,6) {};
		\draw [red] (a00) to node[rlabel]{} (a10) to node[rlabel]{} (a20) (a00) to[arch] node[rlabel]{} (a20);
		\draw [red] (b00) to node[rlabel]{} (b10);
		\draw [red] (c00) to node[rlabel]{} (c10) to node[rlabel]{} (c20) (c00) to[arch] node[rlabel]{} (c20);
		\draw [red] (d0) to node[rlabel]{} (d1);
		
		\node at (-1,6) {$G_1$:};
		\node at (-1,4) {$G_2$:};
		\node at (-1,2) {$G_3$:};
		\node at (-1,0) {$G_4$:};
	\end{tikzpicture}
	\caption{An example of graphs $G_1, G_2, G_3, G_4$ and \\ the required blue paths}
	\label{fig:left-degree-graphs}
	\end{subfigure}
	\begin{subfigure}{0.45\textwidth}
	\centering
	\begin{tikzpicture}
		\foreach \i in {0,1,2} {
			\foreach \j in {0,1,2,3} {
				\node [vtx] (a\i\j) at (\i-15/17*\j, 8/17*\j) {};
			}
			\foreach \j [count=\jcount] in {0,1,2} {
				\draw [blue] (a\i\jcount) to node[blabel]{} (a\i\j);
			} 
		}
		\foreach \i in {0,1} {
			\foreach \j in {0,1,2} {
				\node [vtx] (b\i\j) at (\i-15/17*\j, 8/17*\j+2) {};
			}
			\foreach \j [count=\jcount] in {0,1} {
				\draw [blue] (b\i\jcount) to node[blabel]{} (b\i\j);
			} 
		}
		\foreach \i in {0,1,2} {
			\foreach \j in {0,1} {
				\node [vtx] (c\i\j) at (\i-15/17*\j, 8/17*\j+4) {};
			}
			\draw [blue] (c\i1) to node[blabel]{} (c\i0);
		}
		\node [vtx] (d0) at (0,6) {}; \node [vtx] (d1) at (1,6) {};

		\draw [red] (a00) to node[rlabel]{} (a10) to node[rlabel]{} (a20) (a00) to[arch] node[rlabel]{} (a20);
		\draw [red] (b00) to node[rlabel]{} (b10);
		\draw [red] (c00) to node[rlabel]{} (c10) to node[rlabel]{} (c20) (c00) to[arch] node[rlabel]{} (c20);
		\draw [red] (d0) to node[rlabel]{} (d1);
		
		\node [vtx] (u) at (4,3) {};
		\draw [red] (c00) to node[rlabel]{} (u);
		\draw [blue] (c10) to node[blabel]{} (u);
		\draw [red] (c20) to node[rlabel]{} (u);
		\draw [red] (b00) to node[rlabel]{} (u);
		\draw [red] (b10) to node[rlabel]{} (u);

		\node at (-1,6) {$G_1$:};
		\node at (-1,4) {$G_2$:};
		\node at (-1,2) {$G_3$:};
		\node at (-1,0) {$G_4$:};
		\node [above] at (u) {$u$};
	\end{tikzpicture}
	\caption{Comparing $u$ to $G_2$ and $G_3$: the result is $G_2 < u$ and $u \le G_3$, so $u$ can be added to $G_3$}
	\label{fig:left-degree-comparing-u}
	\end{subfigure}
	\caption{Builder's strategy for Theorem~\ref{thm:left-degree}, where $G = K_4$. The vertical positions of vertices are only varied for clarity; in fact, the vertices on different blue paths or in different $G_i$ are not required to be in any particular order relative to each other, or even to be distinct.}
	\label{fig:left-degree}
\end{figure}

Builder performs many iterations of adding a new vertex to some $G_i$. The new vertex $u$ must be to the right of every previously considered vertex.

Each iteration consists of many steps we'll call ``comparing $u$ to $G_i$". To compare $u$ to $G_i$, Builder draws all the edges between $G_i$ and $u$ that make $G_i \cup \{u\}$ isomorphic to an initial subgraph of $v(G_i)+1$ vertices of $G$; an example is shown in Figure~\ref{fig:left-degree-comparing-u}. We say that the result of the comparison is ``$G_i < u$" if one of these edges is blue, and ``$u \le G_i$" if all of these edges are red. (The special case where $G_i$ is a null graph may not be immediately clear; in this case, there are no edges to check, and we always have $u\le G_i$.) Note that each comparison requires drawing at most $\Delta^-(G)$ edges.

In $\lceil \log_2 n \rceil$ comparisons, Builder can find an $i$ such that $G_{i-1} < u \le G_i$, in which case $u$ can be added to $G_i$. (Since $G_{i-1} < u$, $u$ is the leftmost vertex of a blue $P_i$; since $u \le G_i$, $G_i \cup \{u\}$ is monochromatic red.) As a special case, if $u \le G_1$, then $u$ can be added to $G_1$. As another special case, if $G_{n-1} < u$, then $u$ is the leftmost endpoint of a blue $P_n$, and Builder wins immediately.

After $(|V(G)|-1)(n-1)+1$ iterations, if Builder does not win earlier, then some $G_i$ will have $|V(G)|$ vertices, and there is a red copy of $G$.
\end{proof}

Thus for any fixed graph $G$, we have $r_o(G, P_n)=O(n \log_2n)$.

\subsection{Replacing the path by the cycle}

In this section, we prove Theorem~\ref{thm:path-to-cycle} that $r_o(G, C_n)$ may be bounded in terms of $r_o(G, P_m)$ up to a small additive constant. In fact, we can make the following more precise claim. For all $k \ge 1$, there is a constant $c_k$ such that if $G$ is any $k$-vertex ordered graph, then $r_o(G, C_n) \le r_o(G, P_{kn-n-k+2}) + c_k$.

To prove this theorem, we begin with three claims about steps in Builder's strategy. There is a common idea in all three proofs: if Builder draws an edge $vw$ between two vertices of a blue cycle, and Painter colors it blue, then a shorter blue cycle is created by skipping all vertices of the original cycle between $v$ and $w$. We say that such an edge $vw$ has \emph{length} $\ell$ if there are $\ell-1$ vertices of the cycle between $v$ and $w$. 

To force Painter to color such edges blue, Builder may draw a copy of $G$ using vertices of the cycle. We say that such a copy of $G$ is \emph{scaled by} $\ell$ if there are $\ell-1$ vertices of the cycle between any two consecutive vertices of $G$. Edges in such a copy of $G$ can have length $\ell, 2\ell, \dots, (k-1)\ell$. We will assume that Painter never colors a copy of $G$ entirely red, because then Builder would win.

Builder may draw multiple scaled copies of $G$ on the same cycle to obtain multiple blue edges, before skipping any vertices. In such a case, each copy of $G$ is drawn to the right of all previous copies, so that all of the blue edges (or only some of them) can be used to skip vertices together.

\begin{claim}\label{p2c-linear-to-epsilon}
	For all $\epsilon>0$, if $n$ is sufficiently large, then starting from a blue ordered cycle with length between $(1+2\epsilon)n$ and $kn$, Builder can force either a blue ordered cycle with length between $(1+\epsilon)n$ and $(1+2\epsilon)n$ or a red $G$ in $O(1)$ steps.
\end{claim}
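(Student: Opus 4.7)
The plan is to shrink the cycle to the required length using a bounded number of scaled copies of $G$. Let $D = \{j - i : v_iv_j \in E(G),\ i < j\}$ be the set of edge distances in $G$, and let $d_1 = \min D$ and $d_r = \max D$. At any scale $s$, a copy of $G$ drawn on the cycle forces Painter to color at least one edge blue (otherwise a red $G$ appears), yielding a blue chord of length $ds$ for some $d \in D$ of Painter's choice. Using such a chord of length $\ell$, Builder can form a new blue ordered cycle of length either $L - \ell + 1$ (long-arc option) or $\ell + 1$ (short-arc option); when Builder places several disjoint copies, the reductions from the selected chords add.

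The core technique is to pick a scale $s$ for which, regardless of which edge Painter colors blue in each copy, Builder obtains a chord (or combination of chords) whose associated new cycle length falls in the target window. For the long-arc option with target $[(1+\alpha)n, (1+\beta)n]$, a short calculation shows a valid scale exists whenever
\[
L \le n \cdot \Big(1 + \frac{\rho\beta - \alpha}{\rho - 1}\Big),
\]
where $\rho = d_r/d_1$; the degenerate case $\rho = 1$ (for instance when $G$ is a matching) is handled immediately, since then Painter has no choice of chord length. Because $L$ can be as large as $kn$ in the starting cycle, a single round of one copy may not suffice. Instead, I propose iterating: introduce a sequence of nested target windows $W_T \subset W_{T-1} \subset \cdots \subset W_0$ with $W_T = [(1+\epsilon)n, (1+2\epsilon)n]$ and each consecutive pair tuned so that a cycle in $W_{i-1}$ can be shrunk by one round into $W_i$. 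The resulting recursion on the window widths grows either geometrically or linearly depending on $\rho$, terminating after at most $T = O(k/\epsilon)$ stages, each of constant edge cost, for a total of $O(1)$ edges.

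The main obstacle is juggling three simultaneous constraints: the copy-fits-on-cycle condition $(k-1)s + 1 \le L$, Painter's adversarial choice of which $d \in D$ is realized, and the fit between the window width $\epsilon n$ and the quanta of achievable reductions. For $L$ near the upper bound $kn$ and general $\rho$, the simple ``one scale, all chords useful'' setup can break down because the scales forced by the target condition may violate the fit condition; I would handle this regime by mixing the long-arc and short-arc options across stages, or by combining chords from several copies so that their reductions sum to the needed amount while no single copy dominates the available cycle length. Verifying that this arithmetic can be made consistent uniformly in all valid $(L, \alpha, \beta, \rho)$ is the technically delicate part of the proof.
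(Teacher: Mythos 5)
Your high-level plan — iterate scaled copies of $G$, shrinking the cycle length gradually into the target window in $O(1)$ rounds — matches the paper's approach, and the $\rho = d_r/d_1$ observation and the one-shot window calculation are both correct as far as they go. But the proposal does not actually close the argument: the recursion on nested windows $W_T \subset \cdots \subset W_0$ is never specified, and you explicitly flag the verification that the fit condition $(k-1)s + 1 \le L$ and the window condition can be jointly satisfied as "technically delicate" and leave it open. As your own computation shows, for $L$ near $kn$ and a narrow target window, a single scale cannot work, so the iteration is essential — and it is precisely that iteration that is missing.

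The paper resolves this with a much simpler invariant: it never tries to steer the cycle into any intermediate window. Writing the current length as $(1+\epsilon)n + t$ (so $t \ge \epsilon n$ at the start), Builder scales one copy of $G$ by $s = \lfloor t/(k-1)\rfloor$. Every chord then has length between $s$ and $(k-1)s \le t$, so the new excess $t'$ satisfies $1 \le t' \le t - s + 1$, which for large $n$ is at most $(1 - 1/k)t$; the floor $(1+\epsilon)n$ is preserved automatically because no chord exceeds $t$, and the copy fits because it spans at most $t+1$ vertices of a cycle of length $> t$. After about $\log(k/\epsilon)/\log(k/(k-1))$ iterations the excess drops below $\epsilon n$. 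This "shrink the excess by a constant factor while respecting the floor" argument is the concrete version of your nested-window idea, but it dispenses with the need to balance short-arc/long-arc options or juggle window boundaries, which is where your sketch stalls. To make your proposal into a proof you would need to supply this (or an equivalent) recursion and verify the fit condition at each stage; as written, the proposal identifies the right ingredients but does not assemble them.
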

\begin{proof}
Suppose that there is a blue $C_{(1+\epsilon)n + t}$ for some $t \ge \epsilon n$. Builder draws a copy of $G$ scaled by $\left\lfloor\frac{t}{k-1}\right\rfloor$ on the cycle, obtaining a blue edge of length at least $\left\lfloor\frac{t}{k-1}\right\rfloor$ and at most $t$. Builder uses this edge to obtain a cycle with length at least $(1+\epsilon)n$ and at most $(1+\epsilon)n + t - \left\lfloor\frac{t}{k-1}\right\rfloor$. For sufficiently large $n$, this upper bound is at most $(1+\epsilon)n + (1 - \frac1k)t$.

After repeating this procedure up to $j$ times, Builder obtains a cycle with length at least $(1+\epsilon)n$ and at most $(1+\epsilon)n + (1-\frac1k)^j t$. For $j \ge \frac{\log_2(k/\epsilon)}{\log_2(k/(k-1))}$, the length of this cycle is guaranteed to be in the range we want.
\end{proof}

\begin{claim}\label{p2c-epsilon-to-constant}
	If $n$ is sufficiently large, then starting from a blue ordered cycle with length between $(1+\frac1{2k^2})n$ and $(1+ \frac1{k^2})n$, Builder can force either a blue $C_{n+t}$ with $k^3 \le t \le k^3 + 2(k-1)!$ or a red $G$ in $O(1)$ steps.
\end{claim}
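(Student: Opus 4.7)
My plan is to iteratively shrink the blue cycle, following the template of the proof of Claim~\ref{p2c-linear-to-epsilon}, but with a scale choice tuned to hit the narrow target window rather than just a geometric range. Let $N'$ denote the current cycle length and set $U = N' - n - k^3$; the goal is to drive $U$ into $[0, 2(k-1)!]$, at which point we have a blue $C_{n+t}$ with $t \in [k^3, k^3 + 2(k-1)!]$ and we stop. As long as $U > 2(k-1)!$, Builder draws a scaled copy of $G$ on the cycle with scale $\ell = \lfloor (U+1)/(k-1) \rfloor$. If Painter colors this copy entirely red, Builder wins with a red $G$; otherwise, at least one edge is blue, producing a blue chord on the cycle of length $j\ell$ for some $j \in \{1,\dots,k-1\}$, and Builder uses this chord to skip, reducing the cycle length by $j\ell - 1$.

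The choice $\ell = \lfloor (U+1)/(k-1) \rfloor$ is calibrated to keep Builder safe from overshooting: the maximum possible reduction is $(k-1)\ell - 1 \le U$, so the new cycle length never drops below $n + k^3$, while the minimum reduction is $\ell - 1 \ge U/(k-1) - 1$, so the excess $U$ contracts by at least a factor $(k-2)/(k-1)$ (up to an $O(1)$ additive correction) per step. Iterating until $U \le 2(k-1)!$ completes the reduction.

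The main obstacle is showing that the number of iterations does not depend on $n$. A direct accounting of the geometric shrinkage gives $\Theta(\log n)$ iterations for fixed $k$, which falls short of the desired $O(1)$ bound. To squeeze out the stronger bound, my plan is to bundle multiple scaled copies of $G$ into a single step: Builder draws copies at scales forming a geometric progression with ratio $k-1$ and smallest term on the order of $(k-1)!$, placing them in disjoint arcs of the cycle so that the resulting blue chords are automatically non-overlapping. Then, regardless of which one of the $k-1$ possible lengths Painter selects in each copy, a subset-sum argument should allow Builder to pick a subcollection of these blue chords whose combined skip value lands inside the length-$2(k-1)!$ target window centered at $N-n-k^3$. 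Verifying this subset-sum covering, and determining the exact number of copies and scales needed, is the crucial technical step.
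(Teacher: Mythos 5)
Your initial instinct—to iterate the Claim~\ref{p2c-linear-to-epsilon} machinery with a freshly chosen scale at each step—is correct in diagnosing the difficulty: the excess $U$ can be as large as $\Theta(n/k^2)$, so any procedure that shrinks it by a constant factor per step costs $\Theta(\log n)$ moves, which is not $O(1)$. Your proposed repair does not escape this. If the geometric progression of scales has ratio $k-1$ and smallest term on the order of $(k-1)!$ (a constant for fixed $k$), then the largest achievable combined skip after $m$ copies is roughly $(k-1)! \cdot (k-1)^{m+1}$, which must exceed $U = \Theta(n)$; this again forces $m = \Theta(\log n)$, and so your ``one bundled step'' is itself $\Theta(\log n)$ edges. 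The subset-sum density argument is also uncertain: Painter's choices of $j_i \in \{1,\dots,k-1\}$ are adversarial and can create gaps between consecutive achievable subset sums comparable to the largest term, which is far wider than the target window $2(k-1)!$. So there is a genuine gap, both in the step count and in the coverage argument.

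The paper's proof takes a different and cleaner route that you may find instructive, because it avoids the subset-sum issue entirely by making the contribution of each used chord independent of Painter's choice of $j$. Write the current length as $n + k^3 + \ell$. Builder picks the \emph{single} scale $\lfloor \ell/(k-1)!\rfloor$ and draws $k!$ copies of $G$ at that scale, placed on disjoint arcs. Each copy yields a blue chord of length $j\lfloor \ell/(k-1)!\rfloor$ for some $j\in\{1,\dots,k-1\}$, so by pigeonhole at least $(k-1)!$ copies share the same $j$. Builder then uses exactly $(k-1)!/j$ of these chords (this is an integer precisely because $j \le k-1$), removing
\[
\frac{(k-1)!}{j}\Bigl(j\lfloor \ell/(k-1)!\rfloor - 1\Bigr) \;=\; (k-1)!\lfloor \ell/(k-1)!\rfloor - \frac{(k-1)!}{j}
\]
vertices. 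The $j$-dependence cancels in the main term, and the quantity lies in $[\ell - 2(k-1)!,\ \ell]$, landing the cycle length in $[n+k^3,\ n+k^3+2(k-1)!]$ in a single round of $k!\cdot|E(G)| = O(1)$ edges. The key idea you were missing is this cancellation: use enough identically-scaled chords that the number used can be chosen inversely proportional to whichever $j$ Painter imposes.
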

\begin{proof}
Let the given blue cycle have length $C_{n+k^3+\ell}$.

We assume that $n \ge 2k^5 + 2k^2(k-1)!$, so that $\ell \ge (k-1)!$. This means that Builder can draw copies of $G$ scaled by $\lfloor \ell/(k-1)!\rfloor$ on the cycle. Builder draws $k!$ such copies of $G$; each one occupies fewer than $k\ell/(k-1)!$ vertices, for at most $k^2\ell < n$ vertices.

For some $j \le k-1$, at least $(k-1)!$ of these copies of $G$ give blue edges of length $j\lfloor \ell/(k-1)!\rfloor$. Builder uses $(k-1)!/j$ of these edges to shorten the cycle, removing $\frac{(k-1)!}{j}(j\lfloor \ell/(k-1)!\rfloor - 1) = (k-1)!\lfloor \ell/(k-1)!\rfloor - (k-1)!/j$ vertices total: between $\ell$ and $\ell - 2(k-1)!$. 

The result is a cycle of length between $n+k^3$ and $n+k^3 + 2(k-1)!$.
\end{proof}

\begin{claim}\label{p2c-constant-to-done}
	If $n$ is sufficiently large, then starting from a blue $C_{n+t}$ with $n/k 
 \ge t \ge k^3$, Builder can force either a blue $C_n$ or a red $G$ in $O(t)$ steps.
\end{claim}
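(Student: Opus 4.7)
The plan is to shrink the blue ordered cycle's length from $n+t$ to exactly $n$ by repeatedly extracting blue chords that collapse part of the cycle, without ever overshooting. Throughout, write $r := (\text{current cycle length}) - n$ for the remaining excess, starting at $r = t$ with target $r = 0$. I split the argument into a \emph{bulk phase} driving $r$ below a constant depending on $k$, and an \emph{endgame} eliminating the last few excess vertices exactly.

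In the bulk phase, while $r \ge 2(k-1)$, Builder draws a single copy of $G$ on cycle vertices at scale $\ell := \lfloor (r+1)/(k-1)\rfloor \ge 2$. By this choice, every chord of the drawn copy has length in $\{\ell, 2\ell, \ldots, (k-1)\ell\}$, hence at most $(k-1)\ell \le r+1$, so any resulting blue chord is safely usable to shrink the cycle (without dropping below $C_n$) and shrinks $r$ by at least $\ell - 1 \ge 1$. Since Painter cannot color the copy entirely red, at least one blue chord appears per round, so $r$ reaches a value below $2(k-1)$ within $O(t)$ rounds.

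In the endgame, $r$ is now bounded by $2(k-1) - 1$, a constant depending only on $k$. Builder switches to a strategy employing auxiliary off-cycle vertices (inserted between cycle vertices via the foresight trick on $\mathbb N$ mentioned earlier in the paper). Builder draws a copy of $G$ in which one distinguished edge is a chosen length-$(r+1)$ chord of the cycle (whose blue coloring would complete a blue $C_n$), while the other $k-2$ vertices of $G$ lie on fresh auxiliary vertices. If Painter colors every non-distinguished edge red, the distinguished chord is forced blue and Builder wins; otherwise, some auxiliary edge is blue, and Builder discards the attempt and repeats with new auxiliary vertices (and possibly a different target chord).

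The principal obstacle is bounding the number of endgame attempts by $O_k(1)$: Painter can seemingly stall indefinitely by always making one auxiliary edge blue each round. The proof must argue that after $O_k(1)$ repetitions, either a distinguished chord is forced blue or the red edges accumulated across attempts combine into a full red copy of $G$. This likely requires carefully reusing auxiliary vertices across attempts so the accumulated red edges form a meaningful substructure. Combining $O(t)$ bulk rounds with $O_k(1)$ endgame rounds, each drawing $O(k^2)$ edges, yields the claimed $O(t)$ total.
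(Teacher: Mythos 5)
Your bulk phase is sound: drawing a scale-$\ell$ copy of $G$ with $\ell = \lfloor (r+1)/(k-1)\rfloor$ makes every chord length at most $r+1$, so a blue chord never overshoots and always reduces the excess $r$ by at least $1$, reaching $r < 2(k-1)$ in $O(t)$ rounds. The gap is entirely in the endgame, and you acknowledge it yourself: the auxiliary-vertex gadget does not work. Painter can simply color one off-cycle auxiliary edge blue in every attempt and stall indefinitely. Your hope that ``the red edges accumulated across attempts combine into a full red copy of $G$'' has no realization offered: with fresh auxiliary vertices the red edges never cohere into one copy, and with reused auxiliary vertices the edges have already been colored, so re-drawing them gives no new information while Painter's standing blue edge among the auxiliaries permanently blocks the forcing argument. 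Hitting $r=0$ exactly is the entire content of this claim, and the endgame as written does not do it.

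The paper's proof avoids this by never trying to force a single prescribed chord blue. Instead, it harvests a large supply of blue chords \emph{before} shrinking at all: $kt$ scale-$1$ copies yield (by pigeonhole) $t$ blue chords of a common length $j$, each worth a shrink of $j-1$; then $k$ scale-$(j-1)$ copies yield $k$ blue chords of lengths that are multiples of $j-1$, each worth a shrink that is $\equiv -1 \pmod{j-1}$. Builder first spends a few chords of the second kind to bring the excess down to a multiple of $j-1$, then finishes with chords of the first kind. The two shrink amounts are complementary modulo $j-1$, which is what lets Builder land on $n$ exactly; and since all chords are collected while the cycle is still long, there is never an overshoot issue. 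Your proposal has no analogue of this modular adjustment, and that is the missing idea.
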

\begin{proof}
Builder begins by drawing $kt$ copies of $G$ scaled by $1$ (that is, using consecutive vertices of $C_{n+t}$). For some $j<k$, at least $t$ of these copies give an edge of length $j$; let $E_1$ be a set of $t$ such edges. Before using any of these edges, Builder continues by drawing $k$ copies of $G$ scaled by $j-1$. Each of these gives an edge whose length is a multiple of $j-1$; let $E_2$ be the set of all $k$ of these edges.

Each edge in $E_2$ can be used to decrease the length of the cycle by $i(j-1)-1$ for some $1 \le i \le k-1$; this is at most $k^2$, and one less than a multiple of $j-1$. Builder uses enough edges from $E_2$ to shorten the cycle to length $C_{n+t'}$ where $t' \equiv 0 \pmod{j-1}$; since $t \ge k^3$, $t' \ge 0$. Finally, Builder uses $t'/(j-1)$ of the edges from $E_1$ to shorten the cycle to length exactly $n$.
\end{proof}

\begin{proof}[Proof of Theorem~\ref{thm:path-to-cycle}]

In cases where $n$ is not large enough to apply Claim~\ref{p2c-linear-to-epsilon}, Claim~\ref{p2c-epsilon-to-constant}, or Claim~\ref{p2c-constant-to-done}, then $n$ is bounded by a function of $k$, and we can handle all such cases by choosing $c_k$ large enough.

Otherwise, Builder first uses the $r_o(G, P_{nk-n-k+2})$ strategy to force either a red $G$ (and win) or a blue $P_{nk-n-k+2}$, whose vertices we will label $0, 1, 2, \dots, (k-1)(n-1)$. Then, Builder draws $G$ at vertices $0, n-1, 2(n-1), \dots, (k-1)(n-1)$.

If all edges of this copy of $G$ are red, then Builder immediately wins. Otherwise, there is a blue edge between vertices $i(n-1)$ and $j(n-1)$ for some $i<j$.

If $j=i+1$, then there is a blue $C_n$ and Builder also immediately wins. Otherwise, one of the other edges drawn in the second step is red, and there is a blue cycle with at least $2n-1$ and at most $kn-n-k+2$ vertices. 

In this case:
\begin{enumerate}
\item Builder executes the strategy of Claim~\ref{p2c-linear-to-epsilon}, taking $\epsilon = \frac1{2k^2}$, obtaining a blue ordered cycle with length between $(1+\frac2{k^2})n$ and $(1+ \frac1{k^2})n$.
\item Builder executes the strategy of Claim~\ref{p2c-epsilon-to-constant}, obtaining a blue ordered cycle with length between $n + k^3$ and $n + k^3 + 2(k-1)!$.
\item Builder executes the strategy of Claim~\ref{p2c-constant-to-done}; since $2(k-1)! = O(1)$, this also takes $O(1)$ steps.
\end{enumerate}
This procedure ends with either a blue $C_n$ or a red $G$.
\end{proof}

\textit{Remark:} We make no effort to optimize the constant $c_k$ in this proof. The length of $P_{nk-n-k+2}$ can also be improved in some cases; this exact length is only required in the first step of the proof of the theorem. In particular, if $G$ is $2$-ichromatic, then a path of length $n + k^3 + k$ is enough. Builder can draw $G$ using the first and last vertices of the path, such that all its edges have length at least $n + k^3$, and then skip directly to applying Claim~\ref{p2c-constant-to-done}.

\section[Results for 3-ichromatic graphs]{Results for $3$-ichromatic graphs}
\label{section:3-ichromatic}

The results in the previous section bound $r_o(G,P_n)$ for general ordered graphs $G$. In this section, we show improved bounds when $G$ is $3$-ichromatic. Equivalently, $G$ is a subgraph of $K_{a,b,c}$, an ordered complete tripartite graph.

\begin{lemma}\label{lemma:tripartite}
For all $a,b,c,d \ge 1$ and $n \ge a + b + c + 2d$, suppose $G$ is a subgraph of $K_{a,b,c}$ with $|E(G)| = m$. Then
\[
    r_o(G, P_n) \le n \cdot \frac2d \cdot(m + r_o(G, P_{a+b+c+2d})).
\]
\end{lemma}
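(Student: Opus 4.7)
The plan is for Builder to extend a blue path by at least $d/2$ vertices on average per phase, where each phase costs at most $m + r_o(G, P_{a+b+c+2d})$ edges. Iterating for at most $\frac{2n}{d}$ phases then yields either a red copy of $G$ (Builder has already won) or a blue $P_n$, matching the stated bound.

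Each phase begins with Builder holding a blue path $P$ of length $\ell$ whose right endpoint is $v$. First, Builder invokes the strategy witnessing $r_o(G, P_{a+b+c+2d})$ on fresh vertices strictly to the right of $v$; after at most $r_o(G, P_{a+b+c+2d})$ edges, this produces either a red copy of $G$ (Builder wins) or a blue path $Q = q_1 q_2 \cdots q_N$ with $N = a+b+c+2d$. Then, writing $A$, $B$, $C$ for the three parts of $K_{a,b,c}$ of sizes $a$, $b$, $c$ respectively, Builder embeds a copy of $G$ by placing $A$ on the last $a$ vertices of $P$, $B$ on $q_{d+1}, \ldots, q_{d+b}$, and $C$ on $q_{d+b+1}, \ldots, q_{d+b+c}$, leaving $d$ unused buffer vertices to the left of $B$ and $a+d$ to the right of $C$ inside $Q$. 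Builder then draws all $m$ edges of this embedded $G$.

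If all $m$ edges are red, Builder wins with a red $G$. If some drawn blue edge joins $A$ to $B$ or $A$ to $C$, say the blue edge is $v_i q_j$ with $v_i \in A$ and $q_j \in B \cup C$, then concatenating the prefix $v_1 \cdots v_i$ of $P$, the blue bridge, and the suffix $q_j q_{j+1} \cdots q_N$ of $Q$ gives a blue path of length at least $\ell + d + 2$, extending $P$ by more than $d$; the $d$-vertex buffer to the left of $B$ in $Q$ is what guarantees enough of $Q$ remains past $B \cup C$ to make this extension worthwhile. The main obstacle is the remaining case, in which the only blue edges lie between $B$ and $C$: these are chords internal to $Q$ and provide no direct bridge from $P$ to $Q$. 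The extra slack $2d$ beyond the $a+b+c$ vertices needed to host $G$ in $Q$ is exactly what lets Builder absorb this case, either by spending a constant number of additional bridging edges from $v$ to the endpoints of the blue $B$--$C$ chord to force an extension of at least $d/2$ in the same phase, or by carrying the newly discovered blue chord into the next phase so that two consecutive phases together extend $P$ by at least $d$. Verifying this amortization---and checking that any extra bridging edges fit inside the per-phase budget of $m + r_o(G, P_{a+b+c+2d})$---is the main work, and produces the factor of $2$ in $\frac{2n}{d}(m + r_o(G, P_{a+b+c+2d}))$.
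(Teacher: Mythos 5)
Your setup is close to the paper's, but there is a genuine gap in the one place you flag as ``the main work,'' and the fixes you sketch do not go through. You keep a single blue path $P$ and build the fresh blue path $Q$ entirely to the right of $P$, placing $A$ at the right end of $P$ and both $B$ and $C$ inside $Q$. Then in the case where Painter colors all $A$--$B$ and $A$--$C$ edges red but makes some $B$--$C$ edge blue, you are stuck: the blue $B$--$C$ edge is a chord of the blue path $Q$, and a blue chord of a blue path cannot lengthen that path (if anything it lets you shorten it). Your two proposed remedies fail. Drawing ``bridging edges from $v$ to the endpoints of the blue $B$--$C$ chord'' does nothing: these are extra edges from $A$ to $B\cup C$ that are not edges of $G$, so Painter may freely color them red without creating any red structure, and no progress is made. ``Carrying the blue chord into the next phase'' also does not help, since the next phase plants a fresh $Q'$ even farther to the right, and a chord buried inside the old $Q$ is useless there. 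Painter can therefore loop Builder forever by always coloring exactly one $B$--$C$ edge blue and everything else red.

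The paper avoids this by maintaining \emph{two} blue paths $P_x$ and $P_y$ (state $(x,y)$) and building the fresh path $Q$ \emph{between} them, placing $A$ at the right end of $P_x$, $C$ at the left end of $P_y$, and $B$ in the middle of $Q$ with $d$-vertex buffers on each side. Then every possible blue edge of the drawn $G$ is useful: a blue $A$--$B$ edge extends $P_x$, a blue $B$--$C$ edge extends $P_y$, and a blue $A$--$C$ edge merges the two paths (after which a fresh right path is rebuilt for $R$ more moves). The progress measure is $x+y$, which grows by at least $d$ per $R+m$ moves, and Builder wins once $x+y\ge 2n$; this is where the factor $\frac{2}{d}$ comes from. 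Without this two-path bookkeeping, the $B$--$C$ case has no resolution, so your argument as written does not establish the lemma.
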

\begin{proof}
For convenience, let $R = r_o(G, P_{a+b+c+2d})$; we will eventually use Theorem~\ref{thm:left-degree} to bound $R$. To keep track of Builder's progress, we say that the graph built so far is in state $(x,y)$ when it contains a blue $P_x$ followed by a blue $P_y$, where $\min\{x,y\} \ge a+c$.

At the beginning of the game, Builder takes $R$ moves to create either a red $G$ (and win) or a blue $P_{a+b+c+2d}$; then, Builder takes $R$ more moves to do this again to the right of the previous $P_{a+b+c+2d}$. This results in state $(a+b+c+2d,a+b+c+2d)$.

In state $(x,y)$, Builder first takes $R$ moves to create either a red $G$ (and win) or a blue $P_{a+b+c+2d}$ between the blue $P_x$ and the blue $P_y$. Then, define sets $A,B,C$ as follows:
\begin{itemize}
	\item $A$: the last $a$ vertices of the blue $P_x$.
	\item $B$: vertices $1+c+d$ through $b+c+d$ of the blue $P_{a+b+c+2d}$.
	\item $C$: the first $c$ vertices of the blue $P_y$.
\end{itemize}
See Figure~\ref{fig:ABC} for an illustration of this definition.

\begin{figure}[h!]
	\centering

     \begin{tikzpicture}[scale=0.8]
		\node [vtx] (x1) at (-2,0) {}; 
		\node [vtx] (x2) at (-1,0) {}; 
		\node [vtx] (x3) at (0,0) {}; 
		\node [vtx] (x4) at (1,0) {};
		\node [vtx] (z2) at (3,0) {}; 
		\node [vtx] (z3) at (4,0) {}; 
		\node [vtx] (z4) at (5,0) {}; 
		\node [vtx] (z5) at (6,0) {}; 
		\node [vtx] (z6) at (7,0) {}; 
		\node [vtx] (z7) at (8,0) {}; 
		\node [vtx] (z8) at (9,0) {}; 
		\node [vtx] (y2) at (11,0) {}; 
		\node [vtx] (y3) at (12 ,0) {}; 
		\node [vtx] (y4) at (13 ,0) {}; 
		\node [vtx] (y5) at (14,0) {}; 
        
        \node [below] at (-.5,-.5) {$P_x$};
        \node [below] at (6,-.5) {$P_{a+b+c+2d}$};
        \node [below] at (12.5,-.5) {$P_y$};
        
        \node [above] at (.5,.5) {$A$};
        \node [above] at (5.5,.5) {$B$};
        \node [above] at (11,.5) {$C$};
        
		\draw[blue] (x1) to node[blabel]{} (x2);
		\draw[blue] (x2) to node[blabel]{} (x3);
		\draw[blue] (x3) to node[blabel]{} (x4);
		\draw[blue] (z2) to node[blabel]{} (z3);
		\draw[blue] (z3) to node[blabel]{} (z4);
		\draw[blue] (z4) to node[blabel]{} (z5);
        \draw[blue] (z5) to node[blabel]{} (z6);
        \draw[blue] (z6) to node[blabel]{} (z7);
        \draw[blue] (z7) to node[blabel]{} (z8);
		\draw[blue] (y2) to node[blabel]{} (y3);
		\draw[blue] (y3) to node[blabel]{} (y4);
		\draw[blue] (y4) to node[blabel]{} (y5);

        \draw (.5,0) ellipse (.8cm and .5cm);
        \draw (5.5,0) ellipse (.8cm and .5cm);
        \draw (11,0) ellipse (.4cm and .4cm);

	\end{tikzpicture}

    \caption{An illustration of the definition of the sets $A,B,$ and $C$ where $a=b=2$ and $c=d=1$.}
	\label{fig:ABC}
\end{figure}
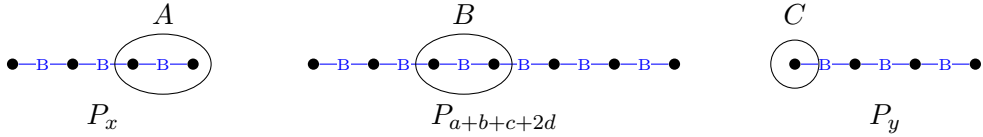

Builder takes $m$ moves to draw a copy of $G$ on vertex set $A \cup B \cup C$; all edges of $G$ have their endpoints in two different sets.

If all these edges are red, Builder wins. If there is a blue edge between $A$ and $B$, it can be used to replace the blue $P_x$ by a blue path with at least $x+d+2$ vertices, resulting in the state $(x+d+2,y)$. If there is a blue edge between $B$ and $C$, it can be used to replace the blue $P_y$ by a blue path with at least $y+d+2$ vertices, resulting in the state $(x,y+d+2)$.

Finally, if there is a blue edge between $A$ and $C$, it can be used to combine the two blue paths into a single path with at least $x+y-a-c+2$ vertices. Then, Builder takes $R$ more moves to create either a red copy of $G$ (and win) or a blue $P_{a+b+c+2d}$ to the right of this single path. This results in the state $(x+y-a-c+2, a+b+c+2d)$. (Note that $x+y-a-c+2 \ge a+c$, the minimum length we required.)

Altogether, one of two things has happened:
\begin{itemize}
\item In $R + m$ moves from state $(x,y)$, Builder obtains a state $(x',y')$ with $x'+y' = x+y+d+2$.
\item In $2R + m$ moves from state $(x,y)$, Builder obtains a state $(x',y')$ with $x'+y' = x+y+b+2d+2$.
\end{itemize}

As the game goes on, the sum $x+y$ increases at a rate of at least $d$ every $R+m$ moves, and the first $2R$ moves yield a state $(x,y)$ with $x+y > 4d$. To reach a state with $x+y \ge 2n$, it takes at most $\frac{2n}{d}-2$ repetitions of $R + m$ moves; allowing for a final sequence of $2R + m$ moves to finish, at most $\frac{2n}{d} \cdot (R + m)$ moves are required.

Therefore after Builder's strategy is followed for $\frac{2n}{d} \cdot (R + m)$ moves, $x+y$ is at least $2n$, which means that either the blue $P_x$ or the blue $P_y$ contains the $P_n$ we want. This results in a victory for Builder.
\end{proof}

\begin{proof}[Proof of Theorem~\ref{thm:3-ichromatic}]
To deduce Theorem~\ref{thm:3-ichromatic} from Lemma~\ref{lemma:tripartite}, first set $d = \frac{a+b+c}{2}$; then 
\[
    r_o(G, P_n) \le \frac{4n}{a+b+c} \left(m + r_o(G, P_{2a+2b+2c})\right) \le \frac{8n}{a+b+c} \cdot r_o(G,P_{2a+2b+2c})
\]
where the second inequality holds because $r_o(G,P_{2a+2b+2c})$ must certainly be at least $m = |E(G)|$. By Theorem~\ref{thm:left-degree}, $r_o(G, P_{2a+2b+2c}) \le \Delta^-(G) |V(G)| (2a+2b+2c) \log_2(2a+2b+2c)$. Recalling that $|V(G)|=a+b+c$, we obtain 
\[
    r_o(G,P_n) \le 16n \Delta^-(G) |V(G)| \log_2(2 |V(G)|),
\]
which is $O(n |V(G)|^2 \log_2|V(G)|)$.
\end{proof}

\section{Small graphs vs $P_n$}
\label{section:small-graphs}

In this section, we prove Theorem~\ref{thm:bad-small-graphs}, giving us bounds on $r_o(G, P_n)$ when $G$ is one of three small ``bad" graphs: the path $P_3$, the intersecting matching $X$, and the $2$-pronged claw $K_{1,2}$.

\subsection{The first bad graph: $P_3$}

To prove a lower bound on $r_o(P_3,P_n)$, we give a strategy for Painter in Lemma~\ref{lemma:p3-lower-bound}. The upper bound in Lemma~\ref{lemma:p3-upper-bound} is a result of a strategy for Builder.

\begin{lemma}\label{lemma:p3-lower-bound}
	For all $n \ge 1$, $r_o(P_3, P_n) \ge 2n-2$.
\end{lemma}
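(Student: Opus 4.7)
The plan is to exhibit an explicit Painter strategy and show it survives at least $2n-3$ rounds against any Builder strategy.

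Painter's strategy assigns each newly introduced vertex $v$ a real label $h(v)$, and colors every edge $uv$ with $u<v$ red if $h(u)<h(v)$ and blue if $h(u)>h(v)$. Painter chooses labels so that the sequence of $h$-values in position order decomposes into at most two strictly decreasing subsequences, each of length at most $n-1$; as long as this invariant is maintained, the drawn graph cannot contain a red $P_3$ (which would require an increasing triple in $h$, forbidden by a two-chain decomposition) and cannot contain a blue $P_n$ (which would require a decreasing $n$-tuple, longer than either chain).

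To see that the invariant can be maintained, note that when Builder introduces a new vertex $v$ at some position, Painter can add $v$ to whichever of the two chains currently has fewer than $n-1$ vertices, choosing $h(v)$ suitably between its chain-neighbors' values (always possible since $\mathbb{R}$ is dense). The invariant therefore holds as long as at most $2(n-1)=2n-2$ vertices have been introduced. The final step is to translate this vertex bound into a round bound: each round introduces at most two new vertices, but a round in which Builder introduces two new vertices via a single edge cannot simultaneously contribute an interior edge to any eventual blue $P_n$ on a decreasing $n$-chain in $h$. A case analysis on Builder's moves shows at least $2n-2$ rounds are required for Builder to saturate Painter's vertex budget and also draw the $n-1$ edges realizing a forced blue $P_n$.

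The main obstacle is the final accounting step: a naive argument using only the vertex bound yields the weaker $r_o \ge n$, and extracting the full $2n-2$ lower bound requires carefully tracking how introduction edges and would-be path edges interact as Builder attempts to force the game to end. The key subtlety is handling Builder's freedom to insert new vertices between existing ones at will, which forces Painter's chain-placement to be reactive rather than predetermined by introduction order.
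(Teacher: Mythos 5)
The proposal takes a genuinely different route (a transitive Painter strategy via real vertex labels, attempting to leverage Erd\H{o}s--Szekeres), but it contains a fatal logical error in the stated invariant. You claim that if the label sequence ``decomposes into at most two strictly decreasing subsequences, each of length at most $n-1$,'' then there is no decreasing $n$-tuple, ``longer than either chain.'' This inference is false: a decreasing subsequence need not lie within a single chain of the partition, since it can alternate between the two chains. Concretely, a fully decreasing sequence of $2(n-1)$ labels satisfies your invariant (split it into odd and even positions to get two decreasing chains of length $n-1$ each), yet it yields a blue $P_{2n-2}$ under your coloring rule. What you actually need to maintain is that the longest decreasing \emph{subsequence} has length at most $n-1$, i.e.\ (by Dilworth) that the sequence decomposes into at most $n-1$ \emph{increasing} chains; that is a different partition than the one you control, and the ``two decreasing chains, each short'' condition does not imply it.

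Two further issues remain even if the invariant were corrected. First, the maintenance argument (``add $v$ to whichever chain has room, choosing $h(v)$ suitably'') is not justified, and it is delicate: Builder can insert a new vertex at a position where the admissible label range is empty, so the claim that Painter can always sustain the invariant up to $2(n-1)$ vertices needs a real proof, not an appeal to density of $\mathbb{R}$. Second, you yourself flag that converting the vertex bound $2(n-1)$ into a round bound of $2n-2$ is ``the main obstacle'' and sketch only a hope for a ``case analysis''; as written, the proof is incomplete there as well. By contrast, the paper's proof uses an adaptive, non-transitive Painter strategy (color red unless that creates a red $P_3$), then shows directly that each of the $n-1$ blue edges on the final $P_n$ must be ``forced'' by a distinct red edge, giving the $2(n-1)$ bound on rounds with no detour through vertex counts. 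The paper's approach sidesteps both the invariant-maintenance problem and the vertex-to-round accounting.
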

\begin{proof}
Let Painter use the following strategy: color an edge red if this does not create a red $P_3$, blue otherwise.

Each blue edge that appears in the graph must therefore be adjacent to a red edge. We call a blue edge $vw$ with $v < w$ \emph{left-forced} if $v$ has a red edge from a preceding vertex, and \emph{right-forced} if $w$ has a red edge to a following vertex. (A diagram demonstrating some left-forced and right-forced edges of a blue $P_6$ is shown in Figure~\ref{fig:p3-lower-bound}.) In either case, we say that the red edge \emph{forces} $vw$. Each blue edge must be either left-forced or right-forced, or else it could have been colored red. It's possible that a blue edge is both left-forced and right-forced.

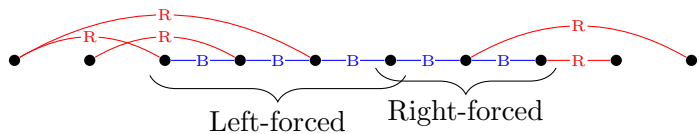
\begin{figure}[h!]
\centering
\begin{tikzpicture}
\node [vtx] (a1) at (0,0) {};
\node [vtx] (a2) at (1,0) {};
\node [vtx] (v1) at (2,0) {};
\node [vtx] (v2) at (3,0) {};
\node [vtx] (v3) at (4,0) {};
\node [vtx] (v4) at (5,0) {};
\node [vtx] (v5) at (6,0) {};
\node [vtx] (v6) at (7,0) {};
\node [vtx] (b1) at (8,0) {};
\node [vtx] (b2) at (9,0) {};

\draw [blue] (v1) to node[blabel]{} (v2) to node[blabel]{} (v3) to node[blabel]{} (v4) to node[blabel]{} (v5) to node[blabel]{} (v6);
\draw [red] (a1) to [arch] node [rlabel]{} (v1);
\draw [red] (a2) to [arch] node [rlabel]{} (v2);
\draw [red] (a1) to [arch] node [rlabel]{} (v3);
\draw [red] (v5) to [arch] node [rlabel]{} (b2);
\draw [red] (v6) to node [rlabel]{} (b1);
\draw [decorate,decoration={brace,amplitude=10pt}] (5.2,-0.2) -- (1.8,-0.2);
\node at (3.5,-0.8) {Left-forced};
\draw [decorate,decoration={brace,amplitude=10pt}] (7.2,-0.1) -- (4.8,-0.1);
\node at (6,-0.7) {Right-forced};
\end{tikzpicture}
\caption{An example of left-forced and right-forced edges in the proof of Lemma~\ref{lemma:p3-lower-bound}}
\label{fig:p3-lower-bound}
\end{figure}

Suppose that the game continues until a blue $P_n$ is created. Let $uv, vw$ with $u < v < w$ be two consecutive edges of that path. Then it is impossible for $uv$ to be right-forced and $vw$ to be left-forced; in that case, $v$ would have a red edge both from a preceding vertex and to a following vertex, and a red $P_3$ would already have existed.

Therefore, the blue $P_n$ must consist of a segment (possibly empty) of left-forced edges, followed by a segment (possibly empty) of right-forced edges.

Each left-forced edge is forced by a red edge to its first endpoint from a preceding vertex, and these edges are all different (because their second endpoint is different). Each right-forced edge is forced by a red edge from its second endpoint to a following vertex, and these edges are all different (because their first endpoint is different). The red edges forcing the left-forced edges must be different from the red edges forcing the right-forced edges, because each of the former edges ends to the left of where each of the latter edges begins.

Therefore the $n-1$ blue edges of the blue $P_n$ are forced by $n-1$ distinct red edges, and there must be at least $2(n-1)$ edges total.
\end{proof}

\begin{lemma}\label{lemma:p3-upper-bound}
	For all $n \ge 2$, $r_o(P_3,P_n)\leq \frac{8}{3}n-\frac{10}{3}.$
\end{lemma}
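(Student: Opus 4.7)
The plan is to construct an explicit Builder strategy whose amortized cost is $\tfrac{8}{3}$ moves per vertex of the blue path. Throughout, Builder maintains a blue ordered path $P_k$, with the invariant that the right endpoint $w$ is \emph{primed}: some vertex $u < w$ satisfies that $uw$ is colored red. Priming is the workhorse of the strategy, because Painter is then forced to color any edge $wa$ with $a > w$ blue; otherwise $u,w,a$ would form a red $P_3$. Consequently, a primed $w$ admits a cheap one-move extension to the right, and the whole budget of $\tfrac{8n-10}{3}$ will come from decomposing the game into an initial setup plus a sequence of ``extension blocks.''

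First I would spend 2 moves on an initial setup that produces a blue $P_2$ whose right endpoint is primed, by branching on Painter's response to a pair of edges on three new vertices. Then I would run repeated \emph{extension blocks}, each taking at most 8 moves and extending the blue path by 3 vertices while restoring the primed invariant at the new right endpoint. Given a primed endpoint $w$ and fresh vertices $a < b < c < d < e$ to the right, the ``hard'' branch (Painter greedily colors non-forced edges red) proceeds by drawing $wa$ (forced blue), $ab$ (Painter picks red, priming $b$), $bc$ (forced blue), $wb$ (forced blue, since $w$ was primed), $cd$ (Painter picks red, priming $d$), $de$ (forced blue), $bd$ (forced blue), yielding a blue path $\ldots - w - b - d - e$ of length $k+3$. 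The eighth move is a re-priming move: Builder draws an edge $ve$ where $v$ is an earlier vertex already carrying a red right-edge, so that $v$ is not $L$ and the edge can plausibly be red.

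In branches where Painter colors some non-forced edge (such as $ab$ or $cd$) blue rather than red, the blue path extends more quickly than in the hard branch, which gives Builder a budget surplus within the block that easily covers any necessary re-priming. The main obstacle is the final re-priming step: Painter may color $ve$ blue to deny the invariant. I would handle this by choosing $v$ so that the two colorings are \emph{dual-purpose}---if Painter colors $ve$ red, $e$ becomes primed as desired; if Painter colors $ve$ blue, one obtains a new blue edge that either produces a longer blue path or sets up a cheap priming in the next block, so that the amortized bookkeeping still closes.

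Once each case is verified, summing the two setup moves with $\lceil (n-2)/3 \rceil$ extension blocks of 8 moves each gives at most $2 + \tfrac{8(n-2)}{3} = \tfrac{8n-10}{3}$ moves, completing the proof. The bulk of the write-up will be the case analysis of Painter's responses within a single extension block, verifying that every branch achieves an extension of at least 3 vertices while respecting the invariant and staying within 8 moves.
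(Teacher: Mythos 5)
Your proposal takes a genuinely different route from the paper. The paper runs a "claw strategy" forward for $n-1$ steps (getting a blue $k$-path plus $n-k$ "leftover" vertices, each carrying a red edge from the left), then runs the claw strategy in reverse, and finally merges: the leftover vertices can be forced into a blue path because their red edges threaten a red $P_3$ if Painter colors red again. Your approach instead tries to maintain a running invariant (a "primed" right endpoint) and amortize over eight-move extension blocks.

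The gap is in the re-priming step, which you correctly identify as the obstacle but do not actually resolve. After the seven forced/chosen moves in your hard branch, the path ends at $e$ with $e$ unprimed, and your eighth move is $ve$ for some $v$ with a red right-edge but no red left-edge (so $v\in\{a,c\}$). If Painter colors $ve$ blue, neither of your claimed escapes happens: with $v=c$ you get the blue path $\ldots w\,b\,c\,e$, which is the same length as $\ldots w\,b\,d\,e$, so there is no longer path; and the blue edge $ce$ does nothing to set up a cheap priming of $e$ or of the next block's starting vertex. More fundamentally, the invariant you want simply cannot be maintained against an adversarial Painter: once the path endpoint $e$ is unprimed, Painter can color every edge of the form $ex$ (with $x>e$) red, since multiple red right-edges out of $e$ form only a $K_{1,m}$, not a $P_3$. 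Builder then accumulates primed vertices to the right of $e$ but cannot connect them to the path ending at $e$ by forced blue edges, so the amortized bookkeeping does not close. The paper avoids this by never insisting on the invariant: it allows Painter to refuse and simply collects the leftover vertices, exploiting their red edges only at the very end, when extending into them always goes in the forced direction. To salvage your approach, you would essentially have to reinvent that mechanism (for instance, detecting when Painter has refused re-priming and switching to building a second blue path among the newly primed vertices), at which point the argument is the paper's claw strategy in disguise.
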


For this proof, we define the \emph{claw strategy} for Builder to be the following. At all times, Builder keeps track of a $k$-vertex blue path, and $\ell$ vertices which are the right endpoints of a red edge: we call these \emph{leftover vertices}. The strategy may start with $k=1$ and $\ell=0$ by picking any vertex to be the single vertex of a blue $P_1$.

On each step of the claw strategy, Builder draws an edge from the rightmost vertex of the blue path to the leftmost vertex Builder is not yet tracking. If it is blue, then the blue path is extended; if it is red, then there is an additional leftover vertex. As a result, after $s$ steps, $k+\ell \ge s+1$. (Conversely, it takes at most $k_0 + \ell_0 - 2$ steps to guarantee that either $k \ge k_0$ or that $\ell \ge \ell_0$.) One possible result of several steps of the claw strategy is shown in Figure~\ref{fig:claw-strategy}.

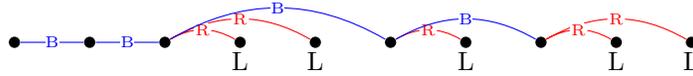
\begin{figure}[h!]
	\centering
	\begin{tikzpicture}
		\foreach \i in {0,1,...,9} \node [vtx] (v\i) at (\i,0) {};
		
		\draw [red] (v2) to [arch] node[rlabel]{} (v3) (v2) to [arch] node[rlabel]{} (v4) (v5) to [arch] node[rlabel]{} (v6) (v7) to [arch] node[rlabel]{} (v8) (v7) to [arch] node[rlabel]{} (v9);
		\draw [blue] (v0) to node[blabel]{} (v1) to node[blabel]{} (v2) to [arch] node[blabel]{} (v5) (v5) to [arch] node[blabel]{} (v7);
		\node [below] at (v3) {\small L};
		\node [below] at (v4) {\small L};
		\node [below] at (v6) {\small L};
		\node [below] at (v8) {\small L};
		\node [below] at (v9) {\small L};
	\end{tikzpicture}

	\caption{One possible outcome of the claw strategy after $9$ steps, resulting in a $5$-vertex blue path and $5$ leftover vertices (marked with an L)}
	\label{fig:claw-strategy}
\end{figure}

The claw strategy may be performed in reverse: extending a path from the left, and obtaining leftover vertices which are the left endpoints of a red edge.

\begin{proof}[Proof of Lemma~\ref{lemma:p3-upper-bound}]
The first phase of Builder's strategy follows the claw strategy for $n-1$ steps, creating a blue path of length $k$ and $n-k$ leftover vertices, where $1 \le k \le n$. The value of $k$ is under Painter's control.

The second phase of Builder's strategy follows the claw strategy in reverse, skipping ahead far enough that all vertices used in the second phase are to the right of all vertices used in the first phase. It lasts for a variable number of steps, depending on $k$:
\begin{itemize}
\item When $k \ge \frac{n+1}{3}$, the second phase lasts $n-2$ steps, creating either a blue $k$-vertex path or $n-k$ leftover vertices.  
\item When $k \le \frac n3$, the second phase lasts $2k-2$ steps, creating either a blue $k$-vertex path or $k$ leftover vertices.
\end{itemize}
The final phase of Builder's strategy depends on the outcome of the second phase.

\textbf{Case 1: $k \ge \frac{n+1}{3}$ and a blue $k$-vertex path is created in the second phase.} In this case, Builder's last $n-k$ moves extend this path through all the leftover vertices of the first phase. If Painter colors any of these edges red, a red $P_3$ is formed; otherwise, a blue $P_n$ is formed. The total number of steps is $(n-1)+(n-2)+(n-k) = 3n-k-3 \le \frac{8n-10}{3}$.

\textbf{Case 2: $k \le \frac n3$ and a blue $k$-vertex path is created in the second phase.} Builder's follow-up is the same as in case 1. The total number of steps is $(n-1)+(2k-2)+(n-k) = 2n+k-3 \le \frac{7n-9}{3}$.

\textbf{Case 3: $k \ge \frac{n+1}{3}$ and $n-k$ leftover vertices are created in the second phase.} In this case, Builder's last $n-k$ moves extend the blue $k$-vertex path from the first phase through the leftover vertices of the second phase. If Painter colors any of these edges red, a red $P_3$ is formed; otherwise, a blue $P_n$ is formed. The total number of steps is $(n-1)+(n-2)+(n-k) = 3n-k-3 \le \frac{8n-10}{3}$.

\textbf{Case 4: $k \le \frac n3$ and $k$ leftover vertices are created in the second phase.} In this case, Builder's last $n-1$ moves build a path through the leftover vertices of both phases: $n$ vertices total. If any of these edges are red, a red $P_3$ is formed; otherwise, a blue $P_n$ is formed. The total number of steps is $(n-1)+(2k-2)+(n-1) \le \frac{8n-12}{3}$.

In all cases, Builder wins in at most $\frac83n - \frac{10}{3}$ steps.
\end{proof}

\subsection{The second bad graph: $X$}

\begin{lemma}\label{lemma:XPn} We have \[\frac{3(n-1)}{2}\leq r_o(X,P_n)\leq\frac32n+2.\]
\end{lemma}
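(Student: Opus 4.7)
The plan has two parts, corresponding to the two inequalities.

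\emph{Lower bound.} I would use the greedy Painter strategy analogous to the one in Lemma~\ref{lemma:p3-lower-bound}: color each new edge red if the resulting coloring still avoids a red $X$, and blue otherwise. Under this rule, every blue edge $e$ has a \emph{forcer} — a pre-existing red edge $r$ such that $\{r,e\}$ would form an $X$ — and a forcer of $e$ is precisely a red edge with exactly one endpoint in the open interval spanned by $e$. Suppose at game end there is a blue $P_n$ on vertices $w_1<w_2<\dots<w_n$ with path edges $e_i=w_iw_{i+1}$ for $1\le i\le n-1$, each of which has a forcer. A single red edge $cd$ can be the forcer of at most two of the $e_i$, because $c$ and $d$ each lie in at most one of the open intervals $(w_i,w_{i+1})$. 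Therefore the red subgraph contains at least $\lceil(n-1)/2\rceil$ distinct forcers, and $r_o(X,P_n)\ge (n-1)+\lceil(n-1)/2\rceil \ge \tfrac{3(n-1)}{2}$.

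\emph{Upper bound.} I plan to give a Builder strategy that extends the blue path at an amortized rate of three new vertices per five moves, with $O(1)$ additive overhead that the $+10$ absorbs. The central idea mirrors the tightness condition of the lower bound: Builder tries to plant red edges each of which crosses two consecutive planned path edges, so one red edge forces two blue path edges. Concretely, with the blue path ending at $v$ and target new path-vertices $x<y<z$, Builder chooses auxiliary vertices $a\in(v,x)$, $b\in(x,y)$, $c\in(y,z)$ and first draws the two edges $ab$ and $bc$. These two edges share $b$ and so do not cross each other; if Painter colors both red, then subsequent draws $vx$, $xy$, $yz$ are forced blue (each crosses one of $ab$ or $bc$), yielding three new path-vertices in a total of five moves.

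The main obstacle is handling Painter's alternative responses. When Painter colors some of the auxiliary edges blue, the plan is to \emph{reinterpret} the blue auxiliary edges as path segments — for example, if $ab$ is blue, rerouting the path to pass through $a$ and $b$ rather than through $x$ — and to continue the template from the new endpoint. When Painter supplies neither useful forcers nor directly usable blue path segments, the plan is to fall back to a short claw probing phase (as in Lemma~\ref{lemma:p3-upper-bound}) to set up a red-star structure at the current path end; once several red star edges are in place, a batch of subsequent edges drawn from earlier path vertices to the newly created leftover vertices become forced blue, and the template resumes at the correct rate. A careful case analysis over Painter's colorings of the auxiliary edges within each template, combined with amortized accounting that charges ``slow'' templates against previously accumulated ``fast'' templates, should show that the five-moves-per-three-vertices ratio holds throughout the main phase, yielding the bound $r_o(X,P_n)\le \tfrac{5n}{3}+10$.
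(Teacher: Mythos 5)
Your lower bound argument is correct and is essentially the paper's: the same greedy Painter strategy, and the same observation that each red edge can serve as the forcer for at most two consecutive path edges because the open intervals $(w_i,w_{i+1})$ are pairwise disjoint.

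The upper bound, however, has a genuine gap. Your template draws the two intended forcers $ab$ and $bc$ first (with $a\in(v,x)$, $b\in(x,y)$, $c\in(y,z)$) and then forces $vx,xy,yz$ blue. That yields $5/3$ only when Painter reds both $ab$ and $bc$. Consider the response $ab$ red, $bc$ blue. The red $ab$ forces $vx$ and $xy$ (but not $yz$), so after four moves Builder has advanced only two vertices --- already a local ratio of $2>5/3$. Your proposed rescue, ``reinterpret the blue auxiliary edge as a path segment,'' does not apply here: $bc$ has $x<b<y<c$, so once the path reaches $y$ the left endpoint $b$ is \emph{behind} the path end and $bc$ cannot be spliced into an ordered path without going backwards. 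If Builder instead tries to detour through $b$ before reaching $y$ (say by drawing $xb$), Painter can color $xb$ red without creating an $X$ (both $ab$ and $xb$ share the endpoint $b$), and the amortization gets worse, not better. The symmetric case $ab$ blue, $bc$ red is problematic too: $ab$ is not incident to the path end $v$, so using it requires drawing $va$, which Painter can color red for free since $va$ shares $v$ and $a$ with nothing crossing. Your invocation of ``amortized accounting that charges slow templates against fast ones'' is exactly the claim that needs proving, and the examples above show that the natural implementations run slow in isolation with no accompanying credit.

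The paper's strategy avoids this pitfall by probing first and adapting: it draws a single probe edge $yz$ to the right of the path end, and only after seeing its color does it decide which forcers to build and where, always with the property that the subsequent extension edges each cross an already-red edge and are therefore forced. That ordering of commitments (probe, then choose forcers) is what lets every branch close at ratio $\le 5/3$; your ordering (commit to forcer locations, then probe by drawing them) gives Painter a cheap way to waste your forcers. If you want to salvage your template, you would need to interleave the draws --- e.g.\ draw $ab$, look at its color, and only then decide whether to draw $bc$ or something else --- which effectively reproduces the paper's adaptive case analysis.
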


\begin{proof}
To prove the lower bound, assume Painter follows the strategy of coloring each edge red unless a red copy of $X$ is created. Builder wins the game when Painter creates a blue copy of $P_n$. Each edge of this blue path must intersect a red edge, and each red edge can intersect at most two of the blue path edges. Therefore, there must be a total of at least $n-1$ blue edges and $\frac{n-1}{2}$ red edges when Builder wins, giving $r_o(X,P_n)\geq  \frac{3(n-1)}{2}$.

For the upper bound, we present a strategy for Builder. For convenience, we define a special step in Builder's strategy called \emph{lacing}. Suppose that the graph built so far contains vertices
\[
	a_1 < \dots < a_k < b_1 < c_1 < d_1 < \dots < d_\ell
\]
with a blue copy of $P_k$ through $a_1, \dots, a_k$, a red edge $b_1c_1$, and a blue copy of $P_\ell$ through $d_1, \dots, d_\ell$. To \emph{lace the blue paths together through} $b_1c_1$, Builder begins by playing edges $b_2c_2$, $b_3c_3$, and so on with $b_{i-1} < b_i < c_i < c_{i-1}$. Builder stops after playing $b_mc_m$ if either $b_mc_m$ is blue, or if $k + (2m-1) + \ell \ge n$.

If $b_mc_m$ is blue, Builder plays the edges of an ordered path from $a_k$ to $b_m$ through $m-2$ additional vertices interleaving $b_1, \dots, b_{m-1}$, and an ordered path from $c_m$ to $d_1$ through $m-2$ additional vertices interleaving $c_{m-1}, \dots, c_1$. If any of these edges are red, a red copy of $X$ is created; otherwise, a blue copy of $P_{k+2m-2+\ell}$ is created. In this case, lacing the paths together took $3m-3$ moves.

If $b_mc_m$ is red, Builder instead plays the edges of an ordered path from $a_k$ to $d_1$ through $2m-1$ additional vertices interleaving $b_1, \dots, b_{m},c_m, \dots c_1$. If any of these edges are red, a red copy of $X$ is created; otherwise, a blue copy of $P_{k+(2m-1)+\ell}$ is created. In either case, Builder wins; in this case, lacing the paths together took $3m-1$ moves.

See Figure~\ref{fig:lacing-strategy} for an example of Builder lacing together two blue paths $P_3$ and $P_4$ using a red edge.

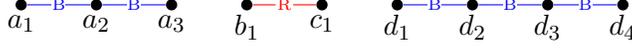
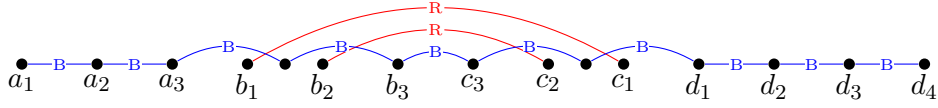
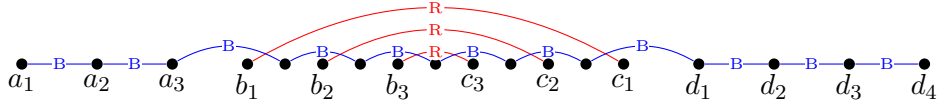
\begin{figure}
\centering
\begin{subfigure}{\textwidth}
\centering
\begin{tikzpicture}
	\node [vtx] (a1) at (1,0) {}; \node [below] at (a1) {$a_1$};
	\node [vtx] (a2) at (2,0) {}; \node [below] at (a2) {$a_2$};
	\node [vtx] (a3) at (3,0) {}; \node [below] at (a3) {$a_3$};
	
    \node [vtx] (b1) at (4,0) {}; \node [below] at (b1) {$b_1$};
	\node [vtx] (c1) at (5,0) {}; \node [below] at (c1) {$c_1$};

	\node [vtx] (d1) at (6,0) {}; \node [below] at (d1) {$d_1$};
	\node [vtx] (d2) at (7,0) {}; \node [below] at (d2) {$d_2$};
	\node [vtx] (d3) at (8,0) {}; \node [below] at (d3) {$d_3$};
	\node [vtx] (d4) at (9,0) {}; \node [below] at (d4) {$d_4$};

	\draw [blue] (a1) to node[blabel]{} (a2) to node[blabel]{} (a3);
    \draw [blue] (d1) to node[blabel]{} (d2) to node[blabel]{} (d3) to node[blabel]{} (d4);
    \draw [red] (b1) to node [rlabel]{} (c1);
 

\end{tikzpicture}
\caption{State before Builder laces the blue paths $a_1, a_2, a_3$ and $d_1,d_2,d_3,d_4$ together through the red edge $b_1c_1$}
\label{fig:lacing-setup}
\end{subfigure}

\begin{subfigure}{\textwidth}
\centering
\begin{tikzpicture}
	\node [vtx] (a1) at (1,0) {}; \node [below] at (a1) {$a_1$};
	\node [vtx] (a2) at (2,0) {}; \node [below] at (a2) {$a_2$};
	\node [vtx] (a3) at (3,0) {}; \node [below] at (a3) {$a_3$};
	
    \node [vtx] (b1) at (4,0) {}; \node [below] at (b1) {$b_1$};
    \node [vtx] (b2) at (5,0) {}; \node [below] at (b2) {$b_2$};
    \node [vtx] (b3) at (6,0) {}; \node [below] at (b3) {$b_3$};
    \node [vtx] (c3) at (7,0) {}; \node [below] at (c3) {$c_3$};
    \node [vtx] (c2) at (8,0) {}; \node [below] at (c2) {$c_2$};
	\node [vtx] (c1) at (9,0) {}; \node [below] at (c1) {$c_1$};

	\node [vtx] (d1) at (10,0) {}; \node [below] at (d1) {$d_1$};
	\node [vtx] (d2) at (11,0) {}; \node [below] at (d2) {$d_2$};
	\node [vtx] (d3) at (12,0) {}; \node [below] at (d3) {$d_3$};
	\node [vtx] (d4) at (13,0) {}; \node [below] at (d4) {$d_4$};

 	\node [vtx] (x1) at (4.5,0) {}; 
 	\node [vtx] (x2) at (8.5,0) {}; 

	\draw [blue] (a1) to node[blabel]{} (a2) to node[blabel]{} (a3);
    \draw [blue] (d1) to node[blabel]{} (d2) to node[blabel]{} (d3) to node[blabel]{} (d4);
    \draw [red] (b1) to [arch] node [rlabel]{} (c1);
    \draw [red] (b2) to [arch] node [rlabel]{} (c2);
    \draw [blue] (b3) to [arch] node [blabel]{} (c3);
 
	\draw [blue] (a3) to [arch] node [blabel]{} (x1)  to [arch] node [blabel]{} (b3);
	\draw [blue] (c3) to [arch] node [blabel]{} (x2) to [arch] node [blabel]{} (d1);

\end{tikzpicture}
\caption{One possibility for how Builder laces the paths together to create a blue $P_{11}$}
\label{fig:lacing-outcome1}
\end{subfigure}

\begin{subfigure}{\textwidth}
\centering
\begin{tikzpicture}
	\node [vtx] (a1) at (1,0) {}; \node [below] at (a1) {$a_1$};
	\node [vtx] (a2) at (2,0) {}; \node [below] at (a2) {$a_2$};
	\node [vtx] (a3) at (3,0) {}; \node [below] at (a3) {$a_3$};
	
    \node [vtx] (b1) at (4,0) {}; \node [below] at (b1) {$b_1$};
    \node [vtx] (b2) at (5,0) {}; \node [below] at (b2) {$b_2$};
    \node [vtx] (b3) at (6,0) {}; \node [below] at (b3) {$b_3$};
    \node [vtx] (c3) at (7,0) {}; \node [below] at (c3) {$c_3$};
    \node [vtx] (c2) at (8,0) {}; \node [below] at (c2) {$c_2$};
	\node [vtx] (c1) at (9,0) {}; \node [below] at (c1) {$c_1$};

	\node [vtx] (d1) at (10,0) {}; \node [below] at (d1) {$d_1$};
	\node [vtx] (d2) at (11,0) {}; \node [below] at (d2) {$d_2$};
	\node [vtx] (d3) at (12,0) {}; \node [below] at (d3) {$d_3$};
	\node [vtx] (d4) at (13,0) {}; \node [below] at (d4) {$d_4$};

 	\node [vtx] (x1) at (4.5,0) {};
    \node [vtx] (x2) at (5.5,0) {};
    \node [vtx] (x3) at (6.5,0) {};
 	\node [vtx] (x4) at (7.5,0) {}; 
 	\node [vtx] (x5) at (8.5,0) {}; 

	\draw [blue] (a1) to node[blabel]{} (a2) to node[blabel]{} (a3);
    \draw [blue] (d1) to node[blabel]{} (d2) to node[blabel]{} (d3) to node[blabel]{} (d4);
    \draw [red] (b1) to [arch] node [rlabel]{} (c1);
    \draw [red] (b2) to [arch] node [rlabel]{} (c2);
    \draw [red] (b3) to [arch] node [rlabel]{} (c3);
 
	\draw [blue] (a3) to [arch] node [blabel]{} (x1) to [arch] node [blabel]{} (x2) to [arch] node [blabel]{} (x3) to [arch] node [blabel]{} (x4) to [arch] node [blabel]{} (x5) to [arch] node [blabel]{} (d1);

\end{tikzpicture}
\caption{The other possibility for how Builder laces the paths together to create a blue $P_{12}$}
\label{fig:lacing-outcome2}
\end{subfigure}

\caption{An example of the lacing strategy Builder uses in the proof of Lemma~\ref{lemma:XPn}}
\label{fig:lacing-strategy}
\end{figure}

Builder's overall strategy maintains a blue path $v_1, \dots, v_k$, which is extended by lacing; before any moves have been made, Builder can take $k=1$ by choosing an arbitrary vertex $v_1$. Builder extends this path in two phases.
 
In the first phase, fix $n$ additional vertices $w_1, \dots, w_n$ appearing after $v_k$ in reverse order: $v_k < w_n < \dots < w_2 < w_1$. Builder plays the following edges of the ordered path through these $n$ vertices:

\begin{enumerate}
\item Edge $w_3 w_2$. If this edge is red, the first phase ends (leading to case 1 below).
\item Edge $w_4 w_3$. If this edge is red, Builder plays $w_2 w_1$ and then, regardless of its color, the first phase ends (leading to case 2 or case 3 below).
\item Edges $w_5 w_4$, $w_6 w_5$, and so on, ending the first phase either once one of these edges is red (also leading to case 2 below) or once a blue path on at least $n-k+1$ vertices is created (leading to case 4 below).
\end{enumerate}

Builder's strategy in the second phase depends on the result of the first phase.

\textbf{Case 1.} $w_3 w_2$ is red. In this case, choose vertices $v_{k+1}$ and $v_{k+2}$ such that $w_3 < v_{k+1} < w_2 < v_{k+2}$; Builder plays edges $v_k v_{k+1}$ and $v_{k+1}v_{k+2}$. If either edge is red, a red copy of $X$ is created; otherwise, Builder extends the blue path to $k+2$ vertices.

\textbf{Case 2.} The edges played in the first phase form an ordered path on $\ell+2$ vertices, for $2 \le \ell \le n-k-1$, in which the first edge is red and all other edges are blue. In this case, Builder laces the blue path on $v_1, \dots, v_k$ to the last $\ell$ vertices of this path through the red edge.

\textbf{Case 3.} $w_4 w_3$ and $w_2 w_1$ are both red. In this case, choose an arbitrary vertex $x$ such that $w_3 < x < w_2$, and an arbitrary vertex $y$ such that $w_1 < y$. Builder laces the blue path on $v_1, \dots, v_k$ to $x$ through $w_4 w_3$. If this blue path still has fewer than $n$ vertices, Builder laces it to $y$ through $w_2w_1$.

\textbf{Case 4.} The edges played in the first phase form an ordered path on $t \ge n-k+1$ vertices $w_{t+1}, \dots, w_2$. In this case, Builder plays edges $v_{k-1} w_{t+1}$ and $v_k w_t$. If both of these edges are red, a red copy of $X$ is created, and Builder wins. Otherwise, a blue path through all but one of the vertices $v_1, \dots, v_k$ and $w_{t+1}, \dots, w_2$ is created, with at least $k+t-1 \ge n$ vertices, so Builder still wins.

Now we analyze the number of moves required for Builder to win using this strategy. To begin with, while the blue path $v_1, \dots, v_k$ still has fewer than $n$ vertices, at most $\frac32(k-1)$ edges have been played, which we prove by induction. Only $0 = \frac32(1-1)$ moves are required to reach $k-1$. The number of vertices added to the path varies by case. If we ignore the lacing steps (which add a variable number of vertices), then:
\begin{itemize}
    \item In case 1, $2$ vertices are added, and $3 = \frac32(2)$ edges are played.
    \item In case 2, $\ell$ vertices are added, and $\ell+1 \le \frac32 \ell$ edges are played.
    \item In case 3, $2$ vertices ($x$ and $y$) are added, and $3 = \frac32(2)$ edges are played.
    \item Case 4 can be ignored for now, since it always ends in a win for Builder.
\end{itemize}
Additionally, each lacing step that does not result in a win for Builder adds $2m-2$ vertices to the blue path in $3m-3 = \frac32(2m-2)$ moves. Altogether, we confirm that every time Builder extends the path to a length $k$ but does not win, at most $\frac32(k-1)$ moves have been made.

A constant term is added at the end, when the path reaches or exceeds $n$ steps. A general reason for this is that in such a case, lacing two paths together may add $2m-1$ vertices to the path at the cost of $3m-1 = \frac32(2m-1) + \frac12$ moves: adding a one-time cost of $\frac12$. The constant term also depends on the case:
\begin{itemize}
    \item In case 1, the efficiency does not change, but the final path might have up to $n+1$ vertices, since $2$ vertices are added at once, at most $\frac32n$ moves have been made.
    \item In case 2, when lacing the two paths together adds $2m-1$ vertices to the length in $3m-1$ moves, this may also result in a path on $n+1$ vertices; together with the one-time cost of $\frac12$, at most $\frac32n + \frac12$ moves have been made.
    \item In case 3, if Builder wins after lacing $y$ to the path, the analysis is the same as for case 2. However, Builder may also win after lacing $x$ to the path; in that case, the $3$ edges played before lacing only contributed one vertex ($x$) for an additional cost of $\frac32$; st most $\frac32n + 2$ moves have been made.
    \item In case 4, Builder adds $t-1$ vertices to the path in $t+1$ moves; at minimum, $t\ge 3$, since the added vertices include $w_4, w_3, w_2$, so $t+1 \le \frac32t - \frac12$. Therefore the final path of length $k+t-1$ is created in at most $\frac32(k-1+t) - \frac12$ moves. If $t = n-k+1$, this is $\frac32n - \frac12$ moves, and in most cases, Builder stops as soon as $t$ reaches $n-k+1$. However, $t$ is always at least $3$, even if $k = n-1$, so Builder might instead create a path of length $n+1$ in $\frac32(n+1) - \frac12$ or $\frac32n + 1$ moves.
\end{itemize}
All in all, we can guarantee that when Builder wins, at most $\frac32n + 2$ moves have been made.
\end{proof}

\subsection{The third bad graph: $K_{1,2}$}

The graph $K_{1,2}$ is unique among the bad graphs of Theorem~\ref{thm:bad-small-graphs} in that we can determine the exact value of $r_o(K_{1,2}, P_n)$. In fact, we can find $r_o(K_{1,k}, P_n)$ exactly for all $k$.

\begin{lemma}\label{lemma:k1k}
For all $n, k \ge 1$, $r_o(K_{1,k}, P_n) = k(n-1)$.
\end{lemma}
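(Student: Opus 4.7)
The plan is to prove the matching bounds $r_o(K_{1,k},P_n) \le k(n-1)$ and $r_o(K_{1,k},P_n) \ge k(n-1)$ separately.

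For the upper bound I would give Builder the following greedy ``fan'' strategy. Builder maintains a monotone blue path $u_1 < u_2 < \cdots < u_j$, starting with $j = 1$ and any vertex $u_1$. To extend, Builder draws the edges $u_j w_1, u_j w_2, \dots$ one at a time, where $w_1 < w_2 < \cdots$ are fresh vertices to the right of every previously used vertex. The round terminates as soon as Painter colors some $u_j w_i$ blue, in which case Builder sets $u_{j+1} = w_i$ and starts the next round; if Painter instead answers red $k$ times in a row, then $u_j w_1, \dots, u_j w_k$ form a red $K_{1,k}$ centered at $u_j$ and Builder wins. Each round costs at most $k$ edges and $n-1$ rounds suffice to produce a blue $P_n$, so Builder wins in at most $k(n-1)$ moves.

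For the lower bound I would let Painter play the greedy-red rule: color each new edge red unless doing so completes a red $K_{1,k}$, in which case color it blue. Since the center of $K_{1,k}$ is its leftmost vertex, an edge $vw$ with $v < w$ is forced blue exactly when $v$ has already accumulated $k - 1$ red neighbors strictly to its right. Under this rule no red $K_{1,k}$ is ever created, so Builder can only win by producing a blue $P_n$ on vertices $v_1 < v_2 < \cdots < v_n$. For each $i = 1, \dots, n-1$ the blue edge $v_i v_{i+1}$ certifies that $v_i$ had $k - 1$ red edges to its right at the moment this edge was drawn, and those red edges persist to the end. These red edges are pairwise distinct across different indices $i$ because their left endpoints $v_1, \dots, v_{n-1}$ are distinct. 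Counting the $n - 1$ blue path edges together with the $(n - 1)(k - 1)$ red edges thus forced yields at least $k(n - 1)$ edges played.

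The only subtle point in either direction is checking that the $(n-1)(k-1)$ red edges in the lower bound are genuinely distinct, and this is immediate from the distinctness of their left endpoints $v_1, \dots, v_{n-1}$. Matching the upper and lower bounds yields $r_o(K_{1,k}, P_n) = k(n-1)$.
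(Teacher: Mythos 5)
Your proof is correct and follows essentially the same approach as the paper's: the upper bound via the greedy fan strategy of probing $k$ fresh right-hand vertices per path extension, and the lower bound via Painter's greedy-red rule combined with the observation that each of the $n-1$ blue path edges $v_iv_{i+1}$ forces $k-1$ distinct red edges with left endpoint $v_i$. Both the strategies and the counting are the same.
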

\begin{proof}
For the lower bound, Painter's strategy is to color every edge red, except if a red copy of $K_{1,k}$ is created. Builder wins the game when a blue copy of $P_n$ is created. Let $vw$ be an edge of this path, with $v<w$; in order for Painter to color $vw$ blue, there must have been a red copy of $K_{1,k-1}$ whose leftmost vertex is $v$. In total, there are $n-1$ red copies of $K_{1,k-1}$, all with distinct edges. (Figure~\ref{fig:k14-strategy-painter} shows an example with $n=4$ and $k=4$.) Together with the $n-1$ blue edges of $P_n$, there are at least $k(n-1)$ edges total.

For the upper bound, Builder will successively build a blue path $P$; initially, $P$ consists of a single arbitrary vertex. When $P = v_1 v_2 \dots v_s$ for some $s \ge 1$, Builder chooses vertices $w_1, \dots, w_k$ to the right of $v_s$ and plays the $k$ edges $\{v_sw_1, v_s w_2, \dots, v_s w_k\}$. If all $k$ edges are red, a red copy of $K_{1,k}$ is created and Builder wins. If $v_sw_i$ is blue for some $i$, then $P$ can be extended to $v_1 v_2 \dots v_s w_i$. (Figure~\ref{fig:k14-strategy-builder} shows an example of the second possibility, with $s=5$ and $k=4$.) Every $k$ moves, Builder is able to extend $P$ by one more vertex; thus Builder wins in $k(n-1)$ moves.
\end{proof}

\begin{figure}[h!]
\centering
\begin{subfigure}{\textwidth}
\centering
\begin{tikzpicture}
	\node [vtx] (v1) at (1,0) {}; \node [below] at (v1) {$v_1$};
	\node [vtx] (a1) at (2,0) {};
	\node [vtx] (b1) at (3,0) {};
	\node [vtx] (c1) at (3,0) {};
	\node [vtx] (v2) at (4,0) {}; \node [below] at (v2) {$v_2$};
	\node [vtx] (a2) at (5,0) {};
	\node [vtx] (b2) at (6,0) {};
	\node [vtx] (v3) at (7,0) {}; \node [below] at (v3) {$v_3$};
	\node [vtx] (a3) at (8,0) {};
	\node [vtx] (b3) at (9,0) {};	
	\node [vtx] (c2) at (10,0) {};
	\node [vtx] (v4) at (11,0) {}; \node [below] at (v4) {$v_4$};
	\node [vtx] (c3) at (12,0) {};

	\draw [blue] (v1) to [arch] node [blabel]{} (v2) to [arch] node [blabel]{} (v3) to [arch] node [blabel]{} (v4);
	\draw [red] (v1) to node [rlabel]{} (a1);
	\draw [red] (v1) to [arch] node [rlabel]{} (b1);
	\draw [red] (v1) to [arch] node [rlabel]{} (c1);
	\draw [red] (v2) to node [rlabel]{} (a2);
	\draw [red] (v2) to [arch] node [rlabel]{} (b2);
	\draw [red] (v2) to [arch] node [rlabel]{} (c2);
	\draw [red] (v3) to node [rlabel]{} (a3);
	\draw [red] (v3) to [arch] node [rlabel]{} (b3);
	\draw [red] (v3) to [arch] node [rlabel]{} (c3);
\end{tikzpicture}
\caption{An example of the structure that must exist to force Painter to color the path $v_1v_2v_3v_4$ blue}
\label{fig:k14-strategy-painter}
\end{subfigure}


\begin{subfigure}{\textwidth}
\centering
\begin{tikzpicture}
	\node [vtx] (v1) at (1,0) {}; \node [below] at (v1) {$v_1$};
	\node [vtx] (v2) at (2,0) {}; \node [below] at (v2) {$v_2$};
	\node [vtx] (v3) at (3,0) {}; \node [below] at (v3) {$v_3$};
	\node [vtx] (v4) at (4,0) {}; \node [below] at (v4) {$v_4$};
	\node [vtx] (v5) at (5,0) {}; \node [below] at (v5) {$v_5$};

	\node [vtx] (w1) at (6,0) {}; \node [below] at (w1) {$w_1$};
	\node [vtx] (w2) at (7,0) {}; \node [below] at (w2) {$w_2$};
	\node [vtx] (w3) at (8,0) {}; \node [below] at (w3) {$w_3$};
	\node [vtx] (w4) at (9,0) {}; \node [below] at (w4) {$w_4$};

	\draw [blue] (v1) to node[blabel]{} (v2) to node[blabel]{} (v3) to node[blabel]{} (v4) to node[blabel]{} (v5);

	\draw [red] (v5) to [arch] node [rlabel]{} (w1);
	\draw [red] (v5) to [arch] node [rlabel]{} (w2);
	\draw [blue] (v5) to [arch] node [blabel]{} (w3);
	\draw [red] (v5) to [arch] node [rlabel]{} (w4);

\end{tikzpicture}
\caption{Builder extends a blue $P_5$ to a blue $P_6$ via edge $v_5w_3$}
\label{fig:k14-strategy-builder}
\end{subfigure}

\caption{Diagrams for the proof of Lemma~\ref{lemma:k1k}, in the case $k=4$}
\label{fig:k14-strategy}
\end{figure}
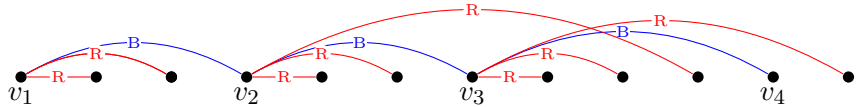
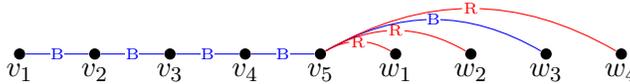

\section{Results for matchings}
\label{section:n+O(1)}

\subsection{Serial matchings}

In this section, we analyze the online Ramsey number $r_o(M_k, P_n)$, where $M_k$ is the serial $k$-edge matching. This case is notable for two reasons. First, $M_k$ is the instance we see of graphs $G$ such that $r_o(G,P_n) = n + O(1)$ as $n \to \infty$. Second, we will be able to find $r_o(M_k,P_n)$ exactly, when $n$ is sufficiently large compared to $k$. 

We will prove Theorem~\ref{thm:serial-matching} in two steps. We begin with the upper bound on $r_o(M_k,P_n)$, which is a strategy for Builder.

\begin{lemma}\label{lemma:mk-upper-bound}
If $n \ge 4$ and $k \ge 2$, then $r_o(M_k, P_n) \le n + 2k -4$.
\end{lemma}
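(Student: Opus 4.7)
My plan is to give Builder a strategy that wins in at most $n + 2k - 4$ moves by maintaining a blue ordered path $P$ together with the list $r_1, \dots, r_j$ of pairwise disjoint red edges encountered so far. Call a vertex \emph{blocked} if it belongs to some $r_i$ and \emph{free} otherwise. Every edge Builder plays will have both endpoints free with at least one of them fresh, so any red edge Painter produces is automatically disjoint from the earlier $r_i$ and becomes $r_{j+1}$. Painter therefore plays at most $k-1$ reds, or else Builder already has $M_k$.

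At each turn Builder applies whichever of the following cases is first applicable: (a) if $P$ is empty, play an edge between two fresh free vertices; (b) if $P = p_1 \cdots p_s$ has a free endpoint, extend outward from it to a fresh free vertex (on the right if $p_s$ is free, on the left if $p_1$ is); (c) if both endpoints of $P$ are blocked, \emph{restart} by playing an edge from the free internal vertex of $P$ nearest to the most recently blocked endpoint to a fresh free vertex on that side, falling back to a fresh pair as in (a) if no free internal vertex remains. A blue (a) sets $|V(P)|=2$; a blue (b) grows $P$ by one; a blue (c) with the choice $p_m = p_{s-1}$ preserves $|V(P)|$ while replacing the blocked endpoint by a new free one; a red (c) blocks one more internal vertex, forcing Builder to retreat inward on the next attempt and eventually to shrink $P$ when the next blue restart lands.

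For the counting let $e$ and $r'$ count blue extensions and blue restarts, and split Painter's $j \le k-1$ reds as $j = j_E + j_R$ (extension-reds plus restart-reds). A blue restart fires only once both endpoints of $P$ are blocked, which requires two extension-reds for the first occurrence and one more extension-red for each subsequent one, so $r' \le j_E - 1$ whenever $r' \ge 1$. Each run of $t$ consecutive restart-reds forces the next blue restart to retreat $t$ vertices, giving total size loss $L \le j_R$. Combining with $e = n - 2 + L$ (the growth needed to reach $|V(P)| = n$ from the initial value $2$), the total number of moves is at most
\[
1 + e + r' + j_E + j_R = n - 1 + r' + j_E + 2L \le n - 2 + 2(j_E + j_R) \le n + 2k - 4.
\]

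The main technical obstacle I anticipate is verifying the two accounting inequalities $r' \le j_E - 1$ and $L \le j_R$ carefully when Painter freely interleaves extension-reds with restart-reds, and handling the degenerate case $|V(P)| = 2$ with both endpoints blocked. The latter resolves by the fresh-pair fallback in (c), which incurs no extra net cost since the old path had already been of length $2$. The hypothesis $n \ge 4$ leaves comfortable room for the restart mechanism; the key invariant that endpoints of $P$ are blocked only by extension-reds and internal vertices only by restart-reds keeps the size-loss bookkeeping tight, so the count above holds in every case.
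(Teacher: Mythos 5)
Your strategy is genuinely different from the paper's. The paper's proof is a clean potential argument: Builder maintains a blue $P_x$ followed by a red $M_y$ sitting entirely to its right, and plays the next edge from the right endpoint of the path into the gap between the path and the matching. A blue answer gives $x \mapsto x+1$; a red answer sacrifices the path's last vertex to the matching, giving $(x,y) \mapsto (x-1,y+1)$. Either way the weight $x+2y$ rises by exactly $1$ per move, which alone yields $n+2k-3$; a single ``opening'' move laying down a path to the left until the first red saves one more. Your approach instead refuses to shrink the path when an endpoint gets blocked, and introduces a restart mechanism with a more elaborate double bookkeeping $(e, r', j_E, j_R, L)$. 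This is more flexible in spirit, but it trades the paper's one-line invariant for several interacting inequalities.

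The most serious gap is one you do not mention at all: you never verify that the $j$ pairwise disjoint red edges actually form the \emph{serial} matching $M_k$. Disjoint ordered edges can be nested ($N_k$) or crossing; $M_k$ specifically requires $v_1<w_1<v_2<w_2<\cdots$. Whether your red edges end up serial depends entirely on where the ``fresh'' vertex is placed, and your description (``a fresh free vertex on that side'') does not pin this down. For example, after extending left and getting a red $\{w_2,p_1\}$ with $w_2<p_1$, a later blue restart from $p_2$ creates a new free left endpoint $w$ with $p_1<w<p_2$; if the next left extension $\{w',w\}$ is played with $w'$ to the left of $w_2$, the red pair $\{w',w\},\{w_2,p_1\}$ is nested, not serial. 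To keep the matching serial one must always place the fresh vertex strictly between the current active vertex and the nearest previously created red edge --- exactly what the paper does explicitly (``let $w$ be a vertex to the right of $v_x$ and to the left of the red $M_y$''). Your two-sided strategy makes this placement rule more delicate and it needs to be stated and checked.

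There are also smaller bookkeeping gaps. Your move count $1+e+r'+j_E+j_R$ implicitly assumes the first $(a)$-move is blue and that no $(a)$-move is ever answered red; Painter can answer an $(a)$-move red, and those reds fall into neither $j_E$ nor $j_R$. (It happens that including a third category $j_A$ with $j_A+j_E+j_R\le k-1$ only tightens the bound, but you should say so.) And the two inequalities $r'\le j_E-1$ and $L\le j_R$ are asserted rather than proved; as you note yourself, they need a careful argument when fall-backs to $(a)$ interleave with restart-reds, and when the game ends on a red move with $j=k$ rather than on a blue $P_n$. None of these seems fatal --- with a precise vertex-placement rule and a case split on how the game ends, the bound does appear to close --- but as written the argument is not complete, whereas the paper's weight-function proof is both shorter and airtight.
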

\begin{proof}
In this proof, we assume that Builder takes care to space out vertices so that it is always possible to play a vertex between any two existing vertices.

To keep track of Builder's progress, we say that the graph built so far is in state $(x,y)$ if it contains a blue $P_x$ followed by a red $M_y$, and we assign state $(x,y)$ the \emph{weight} $x+2y$. (An example is shown in Figure~\ref{fig:serial-strategy-state}.) The empty graph is in state $(1,0)$ with weight $1$, because a blue $P_1$ is just a vertex. A graph with no red $M_k$ or blue $P_n$ must be in a state with weight at most $n-1 + 2(k-1) = n+2k-3$.

\begin{figure}
\centering
\begin{subfigure}{\textwidth}
\centering
\begin{tikzpicture}
	\node [vtx] (v1) at (1,0) {}; \node [below] at (v1) {$v_1$};
	\node [vtx] (v2) at (2,0) {}; \node [below] at (v2) {$v_2$};
	\node [vtx] (v3) at (3,0) {}; \node [below] at (v3) {$v_3$};
	\node [vtx] (v4) at (4,0) {}; \node [below] at (v4) {$v_4$};
	\node [vtx] (v5) at (5,0) {}; \node [below] at (v5) {$v_5$};

	\node [vtx] (x1) at (9,0) {};
	\node [vtx] (y1) at (10,0) {};
	\node [vtx] (x2) at (11,0) {};
	\node [vtx] (y2) at (12,0) {};
	\node [vtx] (x3) at (13,0) {};
	\node [vtx] (y3) at (14,0) {};

	\draw [blue] (v1) to node[blabel]{} (v2);
	\draw [blue] (v2) to node[blabel]{} (v3);
	\draw [blue] (v3) to node[blabel]{} (v4);
	\draw [blue] (v4) to node[blabel]{} (v5);

	\draw [red] (x1) to node[rlabel]{} (y1);
	\draw [red] (x2) to node[rlabel]{} (y2);
	\draw [red] (x3) to node[rlabel]{} (y3);
\end{tikzpicture}
\caption{State $(5,3)$ with weight $11$}
\label{fig:serial-strategy-state}
\end{subfigure}	

\bigskip

\begin{subfigure}{\textwidth}
\centering
\begin{tikzpicture}
	\node [vtx] (v1) at (1,0) {}; \node [below] at (v1) {$v_1$};
	\node [vtx] (v2) at (2,0) {}; \node [below] at (v2) {$v_2$};
	\node [vtx] (v3) at (3,0) {}; \node [below] at (v3) {$v_3$};
	\node [vtx] (v4) at (4,0) {}; \node [below] at (v4) {$v_4$};
	\node [vtx] (v5) at (5,0) {}; \node [below] at (v5) {$v_5$};
	
	\node [vtx] (w) at (7,0) {}; \node [below] at (w) {$w$};

	\node [vtx] (x1) at (9,0) {};
	\node [vtx] (y1) at (10,0) {};
	\node [vtx] (x2) at (11,0) {};
	\node [vtx] (y2) at (12,0) {};
	\node [vtx] (x3) at (13,0) {};
	\node [vtx] (y3) at (14,0) {};

	\draw [blue] (v1) to node[blabel]{} (v2);
	\draw [blue] (v2) to node[blabel]{} (v3);
	\draw [blue] (v3) to node[blabel]{} (v4);
	\draw [blue] (v4) to node[blabel]{} (v5);

	\draw [red] (x1) to node[rlabel]{} (y1);
	\draw [red] (x2) to node[rlabel]{} (y2);
	\draw [red] (x3) to node[rlabel]{} (y3);
	
	\draw [blue] (v5) to [arch] node [blabel]{} (w);

\end{tikzpicture}
\caption{Edge $v_5w$ is blue, resulting in state $(6,3)$ with weight $12$}
\label{fig:serial-strategy-blue}
\end{subfigure}

\bigskip

\begin{subfigure}{\textwidth}
\centering
\begin{tikzpicture}
	\node [vtx] (v1) at (1,0) {}; \node [below] at (v1) {$v_1$};
	\node [vtx] (v2) at (2,0) {}; \node [below] at (v2) {$v_2$};
	\node [vtx] (v3) at (3,0) {}; \node [below] at (v3) {$v_3$};
	\node [vtx] (v4) at (4,0) {}; \node [below] at (v4) {$v_4$};
	\node [vtx] (v5) at (5,0) {}; \node [below] at (v5) {$v_5$};
	
	\node [vtx] (w) at (7,0) {}; \node [below] at (w) {$w$};

	\node [vtx] (x1) at (9,0) {};
	\node [vtx] (y1) at (10,0) {};
	\node [vtx] (x2) at (11,0) {};
	\node [vtx] (y2) at (12,0) {};
	\node [vtx] (x3) at (13,0) {};
	\node [vtx] (y3) at (14,0) {};

	\draw [blue] (v1) to node[blabel]{} (v2);
	\draw [blue] (v2) to node[blabel]{} (v3);
	\draw [blue] (v3) to node[blabel]{} (v4);
    \draw [dashed] (v4) to (v5);

	\draw [red] (x1) to node[rlabel]{} (y1);
	\draw [red] (x2) to node[rlabel]{} (y2);
	\draw [red] (x3) to node[rlabel]{} (y3);

	\draw [red] (v5) to [arch] node [rlabel]{} (w);

\end{tikzpicture}
\caption{Edge $v_5w$ is red, resulting in state $(4,4)$ with weight $12$}
\label{fig:serial-strategy-red}
\end{subfigure}

\bigskip

\begin{subfigure}{\textwidth}
\centering
\begin{tikzpicture}
	\node [vtx] (v0) at (0,0) {}; \node [below] at (v0) {$v_0$};
	\node [vtx] (v1) at (1,0) {}; \node [below] at (v1) {$v_1$};
	\node [vtx] (v2) at (2,0) {}; \node [below] at (v2) {$v_2$};
	\node [vtx] (v3) at (3,0) {}; \node [below] at (v3) {$v_3$};
	\node [vtx] (v4) at (4,0) {}; \node [below] at (v4) {$v_4$};
	\node [vtx] (v5) at (5,0) {}; \node [below] at (v5) {$v_5$};

	\node [vtx] (x1) at (9,0) {};
	\node [vtx] (y1) at (10,0) {};
	\node [vtx] (x2) at (11,0) {};
	\node [vtx] (y2) at (12,0) {};

	\draw [red] (v0) to node[rlabel]{} (v1);
	\draw [blue] (v1) to node[blabel]{} (v2);
	\draw [blue] (v2) to node[blabel]{} (v3);
	\draw [blue] (v3) to node[blabel]{} (v4);
	\draw [blue] (v4) to node[blabel]{} (v5);

	\draw [red] (x1) to node[rlabel]{} (y1);
	\draw [red] (x2) to node[rlabel]{} (y2);

\end{tikzpicture}
\caption{State $(1,5,2)$, also with weight $11$}
\label{fig:serial-strategy-refined}
\end{subfigure}
\caption{Diagrams for the proof of Lemma~\ref{lemma:mk-upper-bound}}
\label{fig:serial-strategy}
\end{figure}
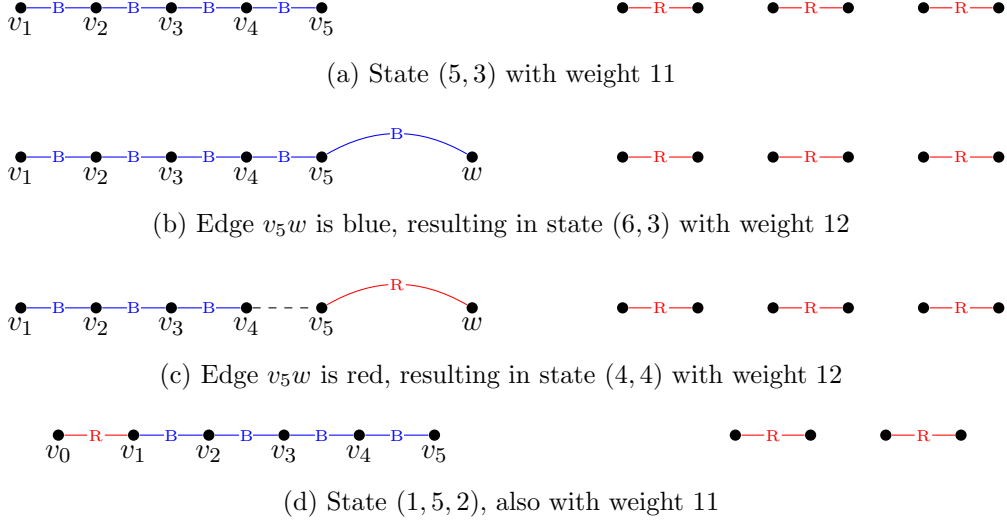

In state $(x,y)$, let $v_1, v_2, \dots, v_x$ be the vertices of the blue $P_x$ in order, and let $w$ be a vertex to the right of $v_x$ and to the left of the red $M_y$. Suppose Builder plays the edge $v_x w$. Then:
\begin{itemize}
\item If $v_x w$ is blue, the graph now contains a blue $P_{x+1}$ (the path $v_1 v_2 \dots v_x w$) followed by a red $M_y$, so it is in state $(x+1,y)$. (An example is shown in Figure~\ref{fig:serial-strategy-blue}.)
\item If $v_x w$ is red, the graph now contains a blue $P_{x-1}$ (the path $v_1 v_2 \dots v_{x-1}$) followed by a red $M_{y+1}$ (edge $v_x w$ together with the previous $M_y$), so it is in state $(x-1,y+1)$. (An example is shown in Figure~\ref{fig:serial-strategy-red}; the dashed edge is the edge $v_{x-1}v_x$, which is no longer part of the blue path tracked by Builder.)
\end{itemize}
In both cases, the new state of the graph has weight $x+2y+1$: it increases by $1$. This argument alone proves $r_o(M_k, P_n) \le n + 2k-3$: after $n+2k-3$ repetitions of the move above from state $(1,0)$, Builder reaches a state $(x,y)$ with weight $n+2k-2$, so either $x \ge n$ or $y \ge k$. 

To improve this bound by $1$, we refine the strategy. We say that the graph is in state $(1,x,y)$ if there is a path $P_{1+x}$ whose first edge is red and all other edges are blue, followed by a red $M_y$; we assign this state weight $x+2y+2$. (An example is shown in Figure~\ref{fig:serial-strategy-refined}.) It remains the case that when the graph is in a state $(1,x,y)$ with weight at least $n+2k-2$, it contains a red $M_k$ or a blue $P_n$.

From state $(1,x,y)$ with $x>1$, Builder can follow the strategy above to obtain state $(1,x+1,y)$ or $(1,x-1,y+1)$ in one move, increasing the weight by $1$. State $(1,1,y)$ carries the same information as state $(1,y+1)$ and has the same weight, so if the graph reaches such a state, we switch to treating it as state $(1,y+1)$.

Builder begins with an ``opening," laying down a path starting at a vertex and going left, until the first time Painter colors an edge of this path red. Either Builder wins in $n-1 < n+2k-4$ steps, or else achieves state $(1,x,0)$ in $x$ steps, which has weight $x+2$. From here, Builder can follow the strategy above to increase weight by at least $1$ with every step. In $n+2k-4$ steps, a state with weight $n+2k-2$ is reached, guaranteeing that Builder wins.
\end{proof}

For sufficiently large $n$, Painter has a simple counter-strategy: Painter colors every edge blue unless this would create a blue $P_n$. In the lemma that follows, we prove that this strategy avoids losing in fewer than $n+2k-4$ moves when $n \ge \max\{3k-3,2k+1 \}$ and $k\ge 2$, completing the proof of Theorem~\ref{thm:serial-matching}. 

\begin{lemma}\label{lemma:mk-lower-bound}
	If $k\ge 2$ and $n \ge \max\{3k-3,2k+1 \}$, then $r_o(M_k, P_n) \ge n + 2k-4$.
\end{lemma}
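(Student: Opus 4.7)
The plan is to show that, under Painter's ``color blue unless creating a blue $P_n$'' strategy, any game state containing a red $M_k$ has at least $n+k-4$ blue edges; combined with the $k$ red edges of $M_k$, this gives at least $n+2k-4$ moves total.

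The key observation is that, for each red edge $(a_i,b_i)$ of $M_k$, Painter's rule forced $L(a_i)+R(b_i)\ge n$ at the moment of its creation, where $L(v),R(v)$ count vertices in a longest blue path ending at or starting from $v$; since blue edges only accumulate, this inequality persists in the final blue graph. Fix any longest blue path $Q = q_1 q_2 \cdots q_\ell$ in the final graph; since there is no blue $P_n$, we have $\ell \le n-1$, and $L(q_p)=p$ and $R(q_p)=\ell-p+1$ on $Q$ by standard arguments (any longer extension would contradict maximality of $\ell$). Classify each red edge $(a_i,b_i)$ as \emph{extreme} (if $b_i=q_1$ or $a_i=q_\ell$) or \emph{middle}. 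If both $a_i,b_i$ lay on $Q$ at positions $p<q$, then $L(a_i)+R(b_i) = p + \ell - q + 1 \le \ell \le n-1 < n$, contradicting the virtual-edge condition; hence every middle red edge has at least one endpoint off $Q$, forcing at least one blue edge incident to that endpoint that is not an edge of $Q$.

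Since the $M_k$-vertices are distinct, at most two red edges can be extreme, so at least $k-2$ are middle. The main obstacle is showing that the $k-2$ off-$Q$ blue edges above can be chosen distinct across the different middle red edges. A potential conflict arises when a single blue edge $(u,v)$ with both endpoints off $Q$ simultaneously witnesses one middle red edge with $a_i = v$ and another earlier middle red edge with $b_j = u$ ($j < i$). Such double-service forces positional constraints of the form $q_{n-2} < q_2$ (or more intricate nested analogs when the $L$- and $R$-values at $u,v$ exceed $2$), which the hypothesis $n \ge 3k-3$ precludes by providing enough room on $Q$ to separate the relevant endpoints. Combined with the $\ell - 1$ edges of $Q$ itself (and treating $\ell < n-1$ by a parallel argument in which every red edge is automatically ``middle'' since then $L(a_i),R(b_i) \ge n-\ell \ge 2$, forcing additional incident blue edges that absorb the deficit in $\ell - 1$), we conclude $b \ge (n-2) + (k-2) = n + k - 4$, and hence $m \ge n + 2k - 4$.
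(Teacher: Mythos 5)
Your approach differs from the paper's: you work with a longest all-blue path $Q$ and count off-$Q$ blue edges incident to red-edge endpoints, while the paper constructs a path $P^*$ that includes one or two red edges, labels its ``anchors'' with directions, and proves that each connected component of $G-P^*$ has at least as many blue edges as red edges via a classification of red edges into strong, weak, and weird. Your observation that each red edge $a_ib_i$ satisfies $L(a_i)+R(b_i)\ge n$ under Painter's strategy, and that on $Q$ one has $L(q_p)=p$ and $R(q_p)=\ell-p+1$ (valid in the ordered setting since left- and right-extensions of $q_p$ are automatically vertex-disjoint), is correct and is a genuinely simpler starting point. Also correct is the deduction that both endpoints of a red edge cannot lie on $Q$, and that at most two red edges are extreme.

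However, the proposal has two genuine gaps. First, the distinctness of the $\ge k-2$ off-$Q$ blue edges is exactly the heart of the lemma, and your argument for it is not a proof: you note that double-service would force ``positional constraints of the form $q_{n-2}<q_2$ (or more intricate nested analogs),'' but $q_{n-2}<q_2$ is meaningless as stated (positions on $Q$ are automatically ordered), no derivation of any constraint is given, and no mechanism is shown by which $n\ge 3k-3$ rules conflicts out. There is also a configuration you do not mention: a red edge with \emph{both} endpoints off $Q$ might get its witness blue edge shared with each of two other red edges, and your sketch does not address whether such chains can cascade. The paper's strong/weak/weird analysis and the component-by-component count exist precisely to control these interactions, and that work is not replicated here.

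Second, the case $\ell<n-1$ is dispatched in one clause with the claim that $L(a_i),R(b_i)\ge n-\ell\ge 2$ forces ``additional incident blue edges that absorb the deficit in $\ell-1$.'' This does not follow: a blue path of $n-\ell$ vertices ending at an off-$Q$ vertex $a_i$ need not be disjoint from $Q$ except at $a_i$ itself, so it is not guaranteed to contribute more than one off-$Q$ blue edge, and you would also need these additional edges to be distinct across all red edges, which reintroduces the unsolved distinctness problem with a larger multiplicity. As written, the argument establishes the required count only when $\ell=n-1$ and only modulo the unproven distinctness claim, so the lemma is not yet proved.
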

\begin{proof}
When Painter colors every edge blue unless this would create a blue $P_n$, the end state of the game is an ordered graph $G$ with $k$ red edges forming a red $M_k$. (We may assume that Builder does not play any other edges that Painter would color red.) We will show that $G$ must contain at least $n + 2k-4$ edges.

As a consequence of Painter's strategy, $G$ does not contain a blue $P_n$. Moreover, every red edge $e \in E(G)$ must be part of a $P_n$ in $G$ in which $e$ is the only red edge. (Otherwise, Painter would have colored $e$ blue.) It follows that if $vw$ is a red edge, then there is no blue $v-w$ path in $G$: otherwise, replacing $vw$ by the blue $v-w$ path would create a path of length at least $n$ in $G$ with no red edges.

Define a path $P^*$ as follows. 
\begin{enumerate}
\item If there is a $P_{n+1}$ in $G$ in which the first and last edges are red, and all others are blue, let $P^*$ be this $P_{n+1}$.
\item Otherwise, if there is a $P_n$ in $G$ in which the first or last edge is red, and all others are blue, let $P^*$ be this $P_n$. (Note that in this case, the other end of $P^*$ is not incident to a red edge, or else we would be in the first case.)
\item Otherwise, choose any edge $e$, and let $P^*$ be a $P_n$ in $G$ in which $e$ is the only red edge.
\end{enumerate}
In all cases, $P^*$ contains $n-2$ blue edges and either $1$ or $2$ red edges.

We call every vertex of $P^*$ that is not incident on a red edge of $P^*$ an \emph{anchor}. Furthermore, we assign a direction (``left" or ``right") to each anchor by the following algorithm:
\begin{enumerate}
\item If an anchor is the rightmost vertex of a blue $P_{n-k}$ in $G$, label it a \emph{left anchor}.
\item If an anchor is the leftmost vertex of a blue $P_{n-k}$ in $G$, label it a \emph{right anchor}. No anchor will fall in both categories: this would imply the existence of a blue $P_{2n-2k-1}$, which contains a blue $P_n$.
\item At this point, if there is a left anchor to the left of a right anchor, the corresponding paths are disjoint and have $2(n-k-1)$ blue edges; together with the $k$ red edges, we get $2n-k-1$, which is at least $n+2k-4$ when $n \ge 3k-3$.

Assign directions to the other anchors arbitrarily so that this rule continues to hold: no left anchor is to the left of a right anchor.
\end{enumerate}

\begin{claim}\label{claim:anchors}
Let $vw$ be a red edge which is not part of $P^*$, but which has an endpoint on $P^*.$ Then:
\begin{enumerate}
    \item[(i)] Exactly one of $v$ or $w$ is an anchor.
    \item[(ii)] If $v$ is an anchor, then $w$ is the left endpoint of a blue edge.
    \item[(iii)] Symmetrically, if $w$ is an anchor, then $v$ is the right endpoint of a blue edge.
\end{enumerate}
\end{claim}
\begin{proof}
Suppose for contradiction that $v$ and $w$ are both anchors. If they are on the same blue segment of $P^*$, then there is a blue $v-w$ path, which we ruled out earlier. If they are on different blue segments of $P^*$, then the red edge of $P^*$ between them is nested between the endpoints of the red edge $vw$, which violates our assumption that the red edges form an $M_k$. This proves (i).

We only prove (ii), since (iii) is its mirror image. Suppose $v$ is an anchor. We know that there is a copy of $P_n$ in $G$ in which $vw$ is the only red edge. That copy of $P_n$ includes a blue edge satisfying (ii), unless $vw$ is its last edge. In this case, $v$ is the rightmost endpoint of a blue $P_{n-1}$. This shows that $G$ contains a $P_n$ in which the last edge is red, and all others are blue; therefore the path $P^*$ is also chosen to contain a blue $P_{n-1}$, extended on one or both sides by a red edge.

If $v$ is rightmost vertex of $P^*$, we would have included edge $vw$ as part of $P^*$; a contradiction. But in all other cases, $v$ is the left endpoint of a blue edge of $P^*$, and we obtain a blue $P_n$; another contradiction. Therefore $v$ cannot be the rightmost endpoint of a blue $P_{n-1}$, and (ii) follows.
\end{proof}

To prove the lemma, it suffices to show that that in $G - P^*$, there are at least as many blue edges as red edges. When $P^*$ has $n-2$ blue edges and $1$ red edge, this would give us $k-1$ red edges and at least $k-1$ blue edges in $G-P^*$, for a total of at least $n+2k-3$. When $P^*$ has $n-2$ blue edges and $2$ red edges, this would give us  $k-2$ red edges and at least $k-2$ blue edges in $G-P^*$, for a total of at least $n+2k-4$. To prove this, we will consider the connected components of $G - P^*$, and show that each one has at least as many blue edges as red edges.

Let $C$ be a connected component of $G - P^*$ with $j$ red edges. If $C$ has $2j+1$ or more vertices, then it must have at least $2j$ edges, and we are done. Therefore, assume that $C$ has only $2j$ vertices: the endpoints of the $j$ red edges of $C$. 

If a vertex $v$ of $C$ is incident to any edges not in $C$, those edges are in $P^*$, and therefore $v$ must be a vertex of $P^*$. Since all vertices of $C$ are incident to a red edge in $C$, $v$ cannot be incident to any red edge in $P^*$; therefore $v$ is only incident to blue edges in $P^*$, so it must be an anchor.

We classify each red edge of $C$ as
\begin{itemize}
\item \textbf{strong}, if its right endpoint is a right anchor, or if its left endpoint is a left anchor. Let $j_1$ be the number of strong edges in $C$.
\item \textbf{weak}, if neither of its endpoints is an anchor. Let $j_2$ be the number of weak edges in $C$.
\item \textbf{weird}, if its right endpoint is a left anchor, or if its left endpoint is a right anchor. Let $j_3$ be the number of weird edges in $C$.
\end{itemize}

By Claim~\ref{claim:anchors}(i), every red edge in $C$ has at most one anchor; therefore every red edge in $C$ is exactly one of strong, weak, or weird, and $j_1 + j_2 + j_3 = j$. Examples of this classification are shown in Figure~\ref{fig:strong-weak-weird}.

\begin{figure}
	\centering
	\begin{tikzpicture}
		\foreach \i in {1,2,...,8} {
			\node [vtx] (x\i) at (\i,0) {}; \node [below] at (x\i) {$x_{\i}$};
		}
		\node [blue] (xdots) at (9,0) {$\cdots$};
		
		\node [vtx] (y1) at (2.5,1.5) {}; \node [above] at (y1) {$y_1$};
		\node [vtx] (y2) at (4.5,1.5) {}; \node [above] at (y2) {$y_2$};
		\node [vtx] (y3) at (5.5,1.5) {}; \node [above] at (y3) {$y_3$};
		\node [vtx] (y4) at (7.5,1.5) {}; \node [above] at (y4) {$y_4$};
		
		\draw [red] (y1) to node[rlabel]{} (x4) (y2) to node[rlabel]{} (y3) (x6) to node[rlabel]{} (y4);
		\draw [blue] (y1) to node[blabel]{} (y2) (y3) to node[blabel]{} (x6);
		
		\draw [red] (x1) to node[rlabel]{} (x2);
		\draw [blue] (x2) to node[blabel]{} (x3) to node[blabel]{} (x4) to node[blabel]{} (x5) to node[blabel]{} (x6) to node[blabel]{} (x7) to node[blabel]{} (x8) to node[blabel]{} (xdots);
		\draw [decorate,decoration={brace,amplitude=10pt}] (8.3,-0.3) -- (2.7,-0.3);
		\node at (5.5,-1) {Right anchors};
	\end{tikzpicture}

	\caption{Part of path $P^*$ with vertices $x_1, x_2, x_3, \dots$ and part of a component $C$ of $G-P^*$; edge $y_1x_4$ is strong, edge $y_2 y_3$ is weak, and edge $x_6y_4$ is weird}
	\label{fig:strong-weak-weird}
\end{figure}
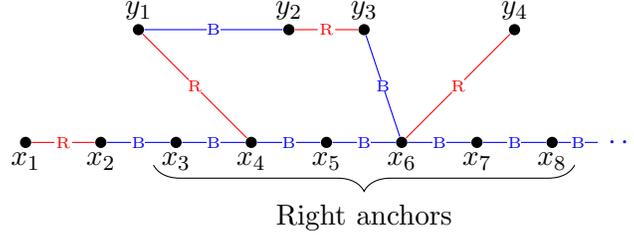

To help prove that there are $j$ blue edges in $C$, we will build up a subgraph $C'$ of $C$. First, we add all vertices of $C$ which are anchors to $C'$. There are exactly $j_1 + j_3$ of these: exactly one endpoint of every strong or weird edge is an anchor, neither endpoint of a weak edge is an anchor, and there are no vertices in $C$ which are not an endpoint of one of its red edges.

Second, we consider the weak edges of $C$. Every weak edge must be part of a $P_n$ in which it is the only red edge; if this $P_n$ is entirely contained in $C$, then $C$ has $n-2 \ge k \ge j$ blue edges, and we are done. Otherwise, the blue $P_n$ must leave $C$; therefore, there is a blue path in $C$ from the weak edge to an anchor. We add this blue path, as well as the weak edge itself, to $C'$.

Third, we consider the weird edges of $C$. Let $vw$ be a weird edge; without loss of generality, $w$ is a left anchor. There must be a $P_n$ containing $vw$ where it is the other blue edge; this decomposes into a blue $P_a$ whose rightmost vertex is $v$, and a blue $P_b$ whose leftmost endpoint is $w$, with $a+b=n$. Because $w$ is not a right anchor, $b < n-k$, and therefore $a \ge k$. However, a blue path in $C$ can only contain one endpoint from each of the $j \le k-1$ red edges, so we can have no more than a $P_{k-1}$ entirely contained in $C$. Therefore the blue $P_a$ must leave $C$: there is a blue path in $C$ from $v$ to an anchor. We add this blue path (but \emph{not} the red edge $vw$) to $C'$. Note that both $v$ and $w$ are now in $C'$; $v$ as part of this blue path, and $w$ as an anchor. The case where $v$ is a right anchor is treated symmetrically.

Now, $C'$ is finalized. Both endpoints of every weak or weird edge of $C$ are in $C'$, contributing $2j_2 + 2j_3$ vertices. Let $i \ge 0$ be the number of non-anchor endpoints of strong edges included in $C'$; since all $j_1$ anchor endpoints of strong edges are included, $C'$ contains $i+j_1+2j_2+2j_3$ vertices. There are at most $j_1+j_3$ connected components in $C'$: we started with the $j_1+j_3$ anchors of $C$, and then added only paths containing at least one of these anchors. Therefore $C'$ has at least $(i+j_1 + 2j_2 + 2j_3) - (j_1 + j_3) = i+2j_2+j_3$ edges. Only $j_2$ edges of $C'$ (the weak edges) are red, so $C'$ has $i+j_2+j_3$ blue edges.

There are $j_1 - i$ non-anchor vertices of strong edges which are not included in $C'$. By Claim~\ref{claim:anchors}(ii) and (iii), each of these must be incident on a blue edge in $C$ (but not in $C'$). Moreover, since (in $G$, and therefore also in $C$) no left anchor is to the left of a right anchor, these edges consist of some number of blue edges going left from a red edge, followed by some number of blue edges going right from a red edge. Therefore these $j_1 - i$ non-anchor vertices must have $j_1 - i$ additional blue edges.

The total number of blue edges is at least $(i+j_2+j_3) + (j_1 - i) = j_1 + j_2 + j_3 = j$. Therefore $C$ contains at least $j$ blue edges, which was what we wanted.
\end{proof}

The lower bound $n \ge \max\{3k-3,2k+1 \}$ might not be optimal. However, Painter's strategy of coloring every edge blue when this does not lose the game assumes that $n$ is large compared to $k$; when $n < k$, Builder can exploit it. 

When $n < k$ and Painter is known to follow this strategy, Builder begins by playing two copies of $P_{n-1}$: one on vertices $10, 20, 30, \dots, 10n-10$ and one on vertices $15, 25, 35, \dots, 10n-5$. Because no blue $P_n$ is created, Painter colors all $2n-4$ edges blue. However, now Builder can play edges 
\[
	\{9,10\}, \{14,15\}, \{20,25\}, \dots, \{10n-20, 10n-15\}, \{10n-10, 10n-9\}, \{10n-5,10n-4\}
\]
and Painter must color each of these red, creating an $M_{n+1}$ in just $3n-3$ moves total: this is $n+2(n+1)-5$. Builder can then play edges $\{10n-20, 10(n+i)\}$ and $\{10(n+i), 10(n+i)+5\}$ for $i=1, \dots, k-n-1$. Painter colors the first edge blue and the second edge red for each $i$, increasing the red matching from $M_{n+1}$ to $M_k$ in $2(k-n-1)$ more moves: $n+2k-5$ moves total. An example where $n=4$ and $k=6$ is illustrated in Figure~\ref{fig:exploit}.

This argument partially justifies the need for a lower bound on $n$ in Theorem~\ref{thm:serial-matching}, though it is possible that Painter has a more refined strategy that does not require it.

\begin{figure}[h!]
	\centering
	\begin{tikzpicture}
		\node [vtx] (9) at (1,0) {}; \node [below] at (9) {$9$};
		\node [vtx] (10) at (2,0) {}; \node [below] at (10) {$10$};
		\node [vtx] (14) at (3,0) {}; \node [below] at (14) {$14$};
		\node [vtx] (15) at (4,0) {}; \node [below] at (15) {$15$};
		\node [vtx] (20) at (5,0) {}; \node [below] at (20) {$20$};
		\node [vtx] (25) at (6,0) {}; \node [below] at (25) {$25$};
		\node [vtx] (30) at (7,0) {}; \node [below] at (30) {$30$};
		\node [vtx] (31) at (8,0) {}; \node [below] at (31) {$31$};
		\node [vtx] (35) at (9,0) {}; \node [below] at (35) {$35$};
		\node [vtx] (36) at (10,0) {}; \node [below] at (36) {$36$};
		\node [vtx] (40) at (11,0) {}; \node [below] at (40) {$40$};
		\node [vtx] (45) at (12,0) {}; \node [below] at (45) {$45$};
		
		\draw [red] (9) to node[rlabel]{} (10) (14) to node[rlabel]{} (15) (20) to node[rlabel]{} (25) (30) to node[rlabel]{} (31) (35) to node[rlabel]{} (36) (40) to node[rlabel]{} (45);
		\draw [blue] (10) to[arch] node[blabel]{} (20) (15) to[arch] node[blabel]{} (25) (20) to[arch] node[blabel]{} (30) (20) to[arch] node[blabel]{} (40) (25) to[arch] node[blabel]{} (35);
	\end{tikzpicture}
	
	\caption{Builder's exploit of Painter's strategy when $n=4$ and $k=6$. Edges $(10,20)$, $(20,30)$, $(15,25)$, $(25,35)$ are played first, then edges $(9,10)$, $(14,15)$, $(20,25)$, $(30,31)$, $(35,36)$, then edge $(20,40)$ and edge $(40,45)$.}
	\label{fig:exploit}
\end{figure}
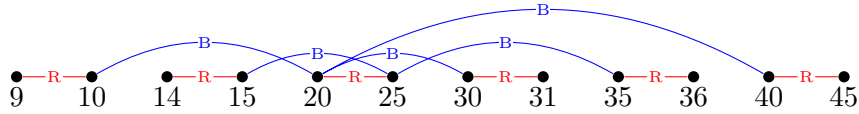

\subsection{The partial St.~Ives matching}

Recall that the partial St.~Ives matching $S_k'$ is obtained from a nested $k$-edge matching by adding $k$ more consecutive nested $k$-edge matchings inside it. In this final section of the paper, we prove Theorem~\ref{thm:partial-st-ives} that for all $k$ there is a constant $c_k$ such that $r_o(S_k',P_n) \le n + c_k$. Throughout this proof, we write $r(n_1, n_2)$ for the Ramsey number of two cliques, $r(K_{n_1}, K_{n_2})$; note that this is the same as $r_o(K_{n_1}, K_{n_2})$, since all vertex orderings of a clique are isomorphic as ordered graphs.

For $0\leq b\leq (k+1)k$, we define a family of $b$-edge subgraphs of $S_k'$ called $\mathcal{S}_k'[b]$. When $b \leq 2k$, the family $\mathcal{S}_k'[b]$ includes only one graph: a nested matching of size $b$, whose inmost edge is denoted by $e_b$. When $b > 2k$,  the family $\mathcal{S}_k'[b]$ includes all ordered graphs consisting of one nested matching of size $k$ and $\lceil \frac bk \rceil - 1$ further nested matchings inside it. These $\lceil \frac bk \rceil - 1$ matchings are consecutive and disjoint; $\lfloor \frac bk \rfloor - 1$ of them have size $k$ and, if $k \nmid b$, one of them has size $b \bmod k$. If $k \mid b$, we denote the set of inmost edges of the $ \frac bk  - 1$ many matchings of size $k$ by $E_b$. If $k \nmid b$, we denote the inmost edge of the matching of size $b \bmod k$ by $e_b$ and set $E_b:=\{e_b\}$.

Note that when $b>2k$, the family $\mathcal{S}_k'[b]$ includes multiple ordered graphs, since we do not specify the location of the nested matching of smaller size. 

For $a\geq (k+1)^3$, we say we are in state $(a,0)$ if there exists a blue $P_a$.

For $b\in [(k+1)^2]\setminus\{2k,3k,\ldots, k^2\}$ and $a\geq (k+1)^3-bk$, we say that we are in state $(a,b)$ if there exists a blue $P_a$ and a red $S\in \mathcal{S}_k'[b]$ where the last $(k+1)^3-bk$ vertices of the blue path are between the two endpoints of the edge $e_b$.

For $b\in \{2k,3k,\ldots, k^2\}$ and $a\geq (k+1)^3-bk$, we say that we are in state $(a,b,0)$ if there exists a blue $P_a$ and a red $S\in \mathcal{S}_k'[b]$ where the last $(k+1)^3-bk$ vertices of the blue path are between the two endpoints of some edge $e_b\in E_b$.

For $b\in \{2k,3k,\ldots, k^2\}$ and $a\geq (k+1)^3-bk$, we say that we are in state $(a,b,1)$ if there exists a blue $P_a$ and a red $S\in \mathcal{S}_k'[b]$ where the last $(k+1)^3-bk$ vertices of the blue path are all between the endpoints of the most inside edge of the outside nested matching, and also either before or after all of the inside copies of the red nested matchings in $S$, or between two consecutive copies of nested matchings in $S$.

We will show that either in one move we increase the first coordinate of our states by 1 or in a constant number of moves the second coordinate by 1 while the first coordinate only decreases by at most a constant. Builder wins when a state $(n',b),(n',b,0), (n',b,1) $ for some $n'\geq n$ and $b\geq 0$ or a state $(a,(k+1)k)$ for some $a\geq 0$ is reached. 

Let $v_a$ be the last vertex in the blue $P_a$ of the current state we are in. 

\begin{claim}
\label{partialstivesc1}
There exists $c_k>0$ such that Builder can reach state $((k+1)^3,0)$ in at most $c_k$ moves.  
\end{claim}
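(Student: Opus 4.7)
The plan is to reduce this claim to a direct application of Theorem~\ref{thm:left-degree}, since $(k+1)^3$ is a constant in $k$ and we are permitted to use earlier results. Specifically, I would have Builder run the strategy of Theorem~\ref{thm:left-degree} with $G = S_k'$ and target blue path length $n = (k+1)^3$. Because $S_k'$ is a matching we have $\Delta^-(S_k') = 1$, and $|V(S_k')| = 2k(k+1)$, so this takes at most
\[
    c_k := 2k(k+1) \cdot (k+1)^3 \cdot \lceil \lg((k+1)^3) \rceil
\]
moves, a constant depending only on $k$.

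At the end of this strategy, one of two things happens. If a red $S_k'$ has already appeared, Builder has won the entire Ramsey game, so the larger strategy in the proof of Theorem~\ref{thm:partial-st-ives} need not continue and the claim is vacuously satisfied. Otherwise, a blue $P_{(k+1)^3}$ has been built. To conclude that the game is in state $((k+1)^3, 0)$, I would verify that state $(a,0)$ has no nontrivial requirements beyond the existence of a blue $P_a$: the family $\mathcal{S}_k'[0]$ consists of the empty graph (trivially a red subgraph of what Builder has drawn), and the position constraint in the general state definition references an edge $e_0$ that is not defined, so no position constraint applies.

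The main obstacle is really bookkeeping rather than any combinatorial difficulty. The state definitions in the excerpt explicitly handle $b \in \{1, 2, \ldots, (k+1)^2\} \setminus \{2k, 3k, \ldots, k^2\}$ and make reference to $e_b$, neither of which covers $b = 0$ cleanly, so one must fix the natural convention that state $(a,0)$ simply means ``a blue $P_a$ has been built.'' Once that convention is adopted, the claim follows immediately from Theorem~\ref{thm:left-degree}. A purely self-contained alternative, if one wishes to avoid invoking Theorem~\ref{thm:left-degree}, is to have Builder play every edge of $K_N$ on $N = r((k+1)^3, |V(S_k')|)$ consecutive vertices and then apply the classical Ramsey theorem: either a red $K_{|V(S_k')|}$ appears, containing $S_k'$ as an ordered subgraph, or a blue $K_{(k+1)^3}$ appears, containing $P_{(k+1)^3}$ as an ordered subgraph; this yields a different but still constant value of $c_k$.
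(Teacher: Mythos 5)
Your primary argument is correct but takes a genuinely different route from the paper. The paper's proof is the one-liner you describe as an ``alternative'': Builder plays every edge of a $K_N$ on $N = r((k+1)^3, 2(k+1)^2)$ consecutive vertices, and the classical Ramsey theorem yields either a blue $K_{(k+1)^3}$ (hence a blue $P_{(k+1)^3}$) or a red $K_{2(k+1)^2}$ (hence a red $S_k'$, since $|V(S_k')| = 2k(k+1) \le 2(k+1)^2$, so Builder wins outright). Your main proposal instead invokes Theorem~\ref{thm:left-degree} with $G = S_k'$ and target path length $(k+1)^3$, which is valid: $\Delta^-(S_k') = 1$ since $S_k'$ is a matching, so the strategy of Theorem~\ref{thm:left-degree} terminates in at most $2k(k+1)\cdot(k+1)^3\cdot\lceil\lg((k+1)^3)\rceil$ moves with either a red $S_k'$ (Builder wins, claim vacuous) or a blue $P_{(k+1)^3}$ (state $((k+1)^3,0)$ reached). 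Both constants are comparable in order of magnitude (roughly polynomial in $k$ versus the Ramsey-number bound which is at most exponential in $k$), so your Theorem~\ref{thm:left-degree} route actually gives a much smaller $c_k$; the paper's route is shorter to state and self-contained within the section. Your observation that state $(a,0)$ should be read as simply ``a blue $P_a$ exists'' is consistent with how the paper later uses the $b=0$ case in Claim~\ref{partialstivesc2}, and your remark that $\mathcal{S}_k'[0]$ is the empty graph is correct under the paper's definition of a nested matching of size $0$.
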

\begin{proof}
Builder simply plays all edges of a clique of size $r((k+1)^3,2(k+1)^2)$ which is a constant depending only on $k$. Then either there exists a red clique of size $2(k+1)^2$, and therefore in particular a red $S\in\mathcal{S}_k'$, or a blue clique of size $(k+1)^3$, and therefore in particular a blue $P_{(k+1)^3}$.
\end{proof}

\begin{claim}
\label{partialstivesc2}
There exists $c_k>0$ such that the following holds.
\begin{itemize}
\item If we are in state $(a,b)$ with $a\geq (k+1)^3-bk$ and $b\not\in\{2k-1,2k,3k-1,3k,\ldots, k^2-1,k^2\}$ we reach state $(a+1,b)$ in 1 move or $(a',b+1)$ in at most $c_k$ moves for some $a' \geq  a$.
\item If we are in state $(a,b)$ with $a\geq (k+1)^3-bk$ and $b\in\{2k-1,3k-1,\ldots, k^2-1\}$ we reach state $(a+1,b)$ in 1 move or $(a',b+1,0)$ in at most $c_k$ moves for some $a' \geq  a$.
\item If we are in state $(a,b,1)$  with $a\geq (k+1)^3-bk$ and $b\in\{2k,3k,\ldots, k^2\}$ we reach state $(a+1,b,1)$ in 1 move or $(a',b+1)$ in at most $c_k$ moves for some $a' \geq  a$.
\end{itemize}
\end{claim} 
\begin{proof} 
Assume we are in the state $(a,b)$ with $a\geq (k+1)^3-bk$ and $b\not\in\{2k,3k,\ldots,k^2\}$, or  the state  $(a,b,1)$  with $a\geq (k+1)^3-bk$ and $b\in\{2k,3k,\ldots, k^2\}$. We start by defining an interval $[u_b,w_b]$ in which all new edges for the following rounds will be played. 

If $b\geq 1$, there exists a blue $P_a$ and a red $S\in \mathcal{S}_k'[b]$ satisfying the properties of state $(a,b)$ or $(a,b,1)$ we start in. We denote by $u_b$ the first vertex to the left of the last $(k+1)^3$ vertices of $P_a$ which is incident to the red $S$. Further, we define $w_b$ to be the first vertex to the right of the last vertex of $P_a$ which is incident to the red $S$.

If $b=0$, there exists a blue $P_a$. Denote by $u_b$ the vertex one to the left of the $(k+1)^3$ last vertex in $P_a$, and denote by $w_b$ an arbitrary vertex to the right of the last vertex in $P_a$. 

Recall that $v_a$ is the last vertex in the blue $P_a$ and note that $u_b < v_a < w_b$. In this case Builder plays an edge from $v_a$ to a vertex $w$ such that $u_b<v_a<w<w_b$. 

First, assume $v_aw$ is painted blue. If if $b\not\in\{2k,3k,\ldots,k^2\}$ we reached state $(a+1,b)$ and if $b\in\{2k,3k,\ldots,k^2\}$ we reached state $(a+1,b,1)$ in 1 move. Next, assume $v_aw$ is painted red. Builder plays all edges of a clique of size $r((k+1)^3,2(k+1)^2)$ where all the vertices of this clique are between $v_a$ and $w$. Then either there exists a red clique of size $2(k+1)^2$ or a blue clique of size $(k+1)^3$ and therefore in a particular a blue $P_{(k+1)^3}$. In the first case, Builder created a red $S_k'$ and therefore wins in a total of at most $a+c_k$ moves for some constant $c_k>0$. 
    
Now assume Builder created a blue path of length $(k+1)^3$. Let $z'$ be the first vertex in this blue path of length $(k+1)^3$ and $z$ be the $((k+1)^3-bk)$-last vertex in the blue $P_a$. Now, Builder plays the edge $zz'$. 
\begin{itemize}
    \item If $zz'$ is painted blue, Builder has created a blue path of length $a'=a-((k+1)^3-bk)+(k+1)^3\geq a$ and a red $S_k'[b+1]$ (with $v_aw$ being the new red edge $e_{b+1}\in E_{b+1}$) where the last $(k+1)^3$ vertices of the blue path are between the two endpoints of the edge $e_{b+1}$. Therefore, if $b\not\in\{2k-1,3k-1,\ldots,k^2-1\}$ we reached state $(a',b+1)$ and if $b\in\{2k-1,3k-1,\ldots,k^2-1\}$ we reached state $(a',b+1,0)$ for $a'\geq a$ in a constant number of moves. See Figure~\ref{fig:zz'blue} for an illustration in the case the edge $zz'$ is painted blue.
    \item If $zz'$ is painted red, there is a blue path of length $a$ and a red $S_k'[b+1]$ (with edge $e_{b+1} 
 \in E_{b+1}$ being the edge $zz'$) where the last $(k+1)^3-bk-1\geq (k+1)^3-(b+1)k$ vertices of the blue $P_a$ are between the two endpoints of the edge $e_{b+1}=zz'$. Thus, if $b\not\in\{2k-1,3k-1,\ldots,k^2-1\}$ we reached state $(a,b+1)$ and if $b\in\{2k-1,3k-1,\ldots,k^2-1\}$ we reached state $(a,b+1,0)$ in a constant number of moves. See See Figure~\ref{fig:zz'red} for an illustration in the case the edge $zz'$ is painted red.
 \qedhere
\end{itemize}
\end{proof}

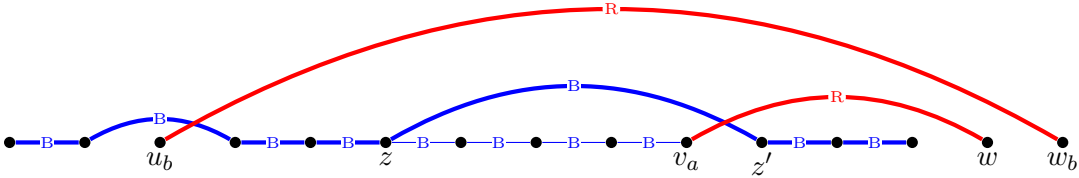
\begin{figure}[h!]
	\centering

	\begin{tikzpicture}
		\node [vtx] (9) at (-2,0) {}; 
		\node [vtx] (8) at (-1,0) {}; 
		\node [vtx] (ub) at (0,0) {}; \node [below] at (ub) {$u_b$};
		\node [vtx] (1) at (1,0) {};
		\node [vtx] (2) at (2,0) {}; 
		\node [vtx] (z) at (3,0) {}; \node [below] at (z) {$z$};
		\node [vtx] (3) at (4,0) {}; 
		\node [vtx] (4) at (5,0) {}; 
		\node [vtx] (5) at (6,0) {}; 
		\node [vtx] (va) at (7,0) {}; \node [below] at (va) {$v_a$};
		\node [vtx] (z') at (8,0) {}; \node [below] at (z') {$z'$};
		\node [vtx] (6) at (9,0) {};
		\node [vtx] (7) at (10,0) {};
		\node [vtx] (w) at (11,0) {}; \node [below] at (w) {$w$};
		\node [vtx] (wb) at (12 ,0) {}; \node [below] at (wb) {$w_b$};
		
		\draw[blue,ultra thick] (9) to node[blabel]{} (8);
		\draw[blue,ultra thick] (8) to[arch] node[blabel]{} (1);
		\draw[blue,ultra thick] (1) to node[blabel]{} (2);
        \draw[blue,ultra thick] (2) to node[blabel]{} (z);
        \draw[blue] (z) to node[blabel]{} (3);
        \draw[blue] (3) to node[blabel]{} (4);
        \draw[blue] (4) to node[blabel]{} (5);
        \draw[blue] (5) to node[blabel]{} (va);
		\draw[blue,ultra thick] (z) to[arch] node[blabel]{} (z');
        \draw[blue,ultra thick] (z') to node[blabel]{} (6);
        \draw[blue,ultra thick] (6) to node[blabel]{} (7);
		\draw[red,ultra thick] (va) to[arch] node[rlabel]{} (w);
        \draw[red,ultra thick] (ub) to[arch] node[rlabel]{} (wb);
	\end{tikzpicture}

	\caption{The situation on the board when $zz'$ is colored blue. The edges contributing to the new state are in bold.}
	\label{fig:zz'blue}
\end{figure}

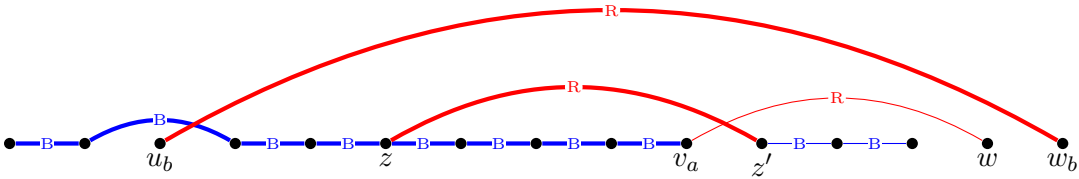
\begin{figure}[h!]
	\centering

	\begin{tikzpicture}
		\node [vtx] (9) at (-2,0) {}; 
		\node [vtx] (8) at (-1,0) {}; 
		\node [vtx] (ub) at (0,0) {}; \node [below] at (ub) {$u_b$};
		\node [vtx] (1) at (1,0) {};
		\node [vtx] (2) at (2,0) {}; 
		\node [vtx] (z) at (3,0) {}; \node [below] at (z) {$z$};
		\node [vtx] (3) at (4,0) {}; 
		\node [vtx] (4) at (5,0) {}; 
		\node [vtx] (5) at (6,0) {}; 
		\node [vtx] (va) at (7,0) {}; \node [below] at (va) {$v_a$};
		\node [vtx] (z') at (8,0) {}; \node [below] at (z') {$z'$};
		\node [vtx] (6) at (9,0) {};
		\node [vtx] (7) at (10,0) {};
		\node [vtx] (w) at (11,0) {}; \node [below] at (w) {$w$};
		\node [vtx] (wb) at (12 ,0) {}; \node [below] at (wb) {$w_b$};
		
		\draw[blue,ultra thick] (9) to node[blabel]{} (8);
		\draw[blue,ultra thick] (8) to[arch] node[blabel]{} (1);
		\draw[blue,ultra thick] (1) to node[blabel]{} (2);
        \draw[blue,ultra thick] (2) to node[blabel]{} (z);
        \draw[blue,ultra thick] (z) to node[blabel]{} (3);
        \draw[blue,ultra thick] (3) to node[blabel]{} (4);
        \draw[blue,ultra thick] (4) to node[blabel]{} (5);
        \draw[blue,ultra thick] (5) to node[blabel]{} (va);
		\draw[red,ultra thick] (z) to[arch] node[rlabel]{} (z');
        \draw[blue] (z') to node[blabel]{} (6);
        \draw[blue] (6) to node[blabel]{} (7);
		\draw[red] (va) to[arch] node[rlabel]{} (w);
        \draw[red,ultra thick] (ub) to[arch] node[rlabel]{} (wb);
	\end{tikzpicture}

	\caption{The situation on the board when $zz'$ is colored red. The edges contributing to the new state are in bold.}
	\label{fig:zz'red}
\end{figure}

\begin{claim}
\label{partialstivesc3}
There exists $c_k>0$ such that the following holds. If we are in state $(a,b,0)$ with $a\geq (k+1)^3-bk$ and $b\in\{2k,3k,\ldots k^2\}$, we reach state $(a',b,1)$ for $a'\geq a-k$ in at most $c_k$ moves.
\end{claim} 
\begin{proof}
Since we are in state $(a,b,0)$ there exists a blue $P_a$ and a red $S\in \mathcal{S}_k'[b]$ where the last $(k+1)^3-bk$ vertices of the blue path are between the two endpoints of some red edge $e_b\in E_b$. Let $u_a$ be the rightmost vertex incident to the red nested matching of size $k$ containing $e_b$, and let $u_b$ be the leftmost vertex, after $u_a$, which is incident to a vertex of $S$. 

Builder plays all edges of a clique of size $r((k+1)^3+k,2(k+1)^2)$ between $u_a$ and $u_b$. Then either Builder created a red $K_{2(k+1)^2}$ or a blue path of length $(k+1)^3+k$. In the first case, Builder created a red $S_k'$ and therefore wins in a total of at most $a+c_k$ moves for some constant $c_k>0$.  
    
Now assume Builder created a blue path of length $(k+1)^3+k$. 

Denote by $c_1',c_2',\ldots, c_k'$ the $k$ first vertices of this blue path and $c_k,c_{k-1},\ldots, c_1$ the last $k$ vertices of the blue $P_a$ (from left to right). Builder now plays the edges $c_1c_1',c_2c_2',\ldots, c_kc_k'$. Note that these $k$ edges form a nested $k$-edge matching.

\begin{itemize}
    \item
If one of those $k$ edges is painted blue, we obtain a blue path of length $a'$ where $a'\geq a$ such that the last $(k+1)^3$ vertices of this blue path are between $u_a$ and $u_b$. We have reached state $(a',b,1)$ in at most $c_k$ moves for some constant $c_k>0$. See Figure~\ref{fig:cc'blue} for an illustration in this case.
\item If all of those $k$ edges are painted red, we obtain a copy $S$ of some $S_k'[b]$ and a blue $P_{a'}$ for $a'=a-k$ with all vertices appearing before the newly created nested matching of size $k$. Moreover, the last $(k+1)^3-bk-k=(k+1)^3-(b+1)k$ vertices of the blue $P_a'$  are all between the endpoints of the most inside edge of the outside nested matching, and also either before or after all of the inside copies of the red nested matchings in $S$, or between two consecutive copies of nested matchings in $S$. 
Thus, we have reached state $(a',b,1)$ in at most $c_k$ moves for some constant $c_k>0$. See Figure~\ref{fig:cc'blue} for an illustration in this case.\qedhere
\end{itemize}
\end{proof}

The proof of Theorem~\ref{thm:partial-st-ives} follows by combining Claims~\ref{partialstivesc1}, \ref{partialstivesc2} and \ref{partialstivesc3}.

\begin{figure}[h!]
	\centering

   \begin{tikzpicture}[scale=0.8]
		\node [vtx] (1) at (-2,0) {}; 
		\node [vtx] (l2) at (-1,0) {}; 
		\node [vtx] (2) at (0,0) {}; 
		\node [vtx] (3) at (1,0) {};
		\node [vtx] (l3) at (2,0) {}; 
		\node [vtx] (3') at (3,0) {}; 
		\node [vtx] (l4) at (4,0) {}; 
		\node [vtx] (4) at (5,0) {}; 
		\node [vtx] (5) at (6,0) {}; 
		\node [vtx] (c3) at (7,0) {}; \node [below] at (c3) {$c_3$};
		\node [vtx] (c2) at (8,0) {}; \node [below] at (c2) {$c_2$};
		\node [vtx] (c1) at (9,0) {}; \node [below] at (c1) {$c_1$};
		\node [vtx] (r1) at (10,0) {};
		\node [vtx] (r2) at (11,0) {}; 
		\node [vtx] (ua) at (12 ,0) {}; \node [below] at (ua) {$u_a$};
		\node [vtx] (c1') at (13 ,0) {}; \node [below] at (c1') {$c_1'$};
		\node [vtx] (c2') at (14,0) {}; \node [below] at (c2') {$c_2'$};
		\node [vtx] (c3') at (15,0) {}; \node [below] at (c3') {$c_3'$};
        \node [vtx] (6) at (16 ,0) {}; 
		\node [vtx] (7) at (17,0) {};
		\node [vtx] (ub) at (18,0) {}; \node [below] at (ub) {$u_b$};
  
		\draw[blue,ultra thick] (1) to[arch] node[blabel]{} (2);
		\draw[blue,ultra thick] (2) to node[blabel]{} (3);
		\draw[blue,ultra thick] (3) to[arch] node[blabel]{} (3');
		\draw[blue,ultra thick] (3') to[arch] node[blabel]{} (4);
        \draw[blue,ultra thick] (4) to node[blabel]{} (5);
		\draw[blue,ultra thick] (5) to node[blabel]{} (c3);
        \draw[blue,ultra thick] (c3) to node[blabel]{} (c2);
        \draw[blue] (c2) to node[blabel]{} (c1);
		\draw[red,ultra thick] (l2) to[arch] node[rlabel]{} (ua);
        \draw[red,ultra thick] (l4) to[arch] node[rlabel]{} (r1);
        \draw[red,ultra thick] (l3) to[arch] node[rlabel]{} (r2);
        \draw[blue] (c1') to node[blabel]{} (c2');
        \draw[blue,ultra thick] (c2') to node[blabel]{} (c3');  
        \draw[blue,ultra thick] (c3') to node[blabel]{} (6); 
        \draw[blue,ultra thick] (6) to node[blabel]{} (7);
        \draw[blue,ultra thick] (c2) to[arch] node[blabel]{} (c2');
        \draw[red,ultra thick] (ub) to  node[rlabel]{} (18,1);
	\end{tikzpicture}
 
 \caption{The situation on the board when one of the edges $c_1c_1',c_2c_2',\ldots, c_kc_k'$ is colored blue (for $k=3$). The edges contributing to the new state are in bold. Depending on the situation on the board, the red edge leaving $u_b$ might go to a vertex to the left or right of all of the displayed vertices.}
	\label{fig:cc'blue}
\end{figure}
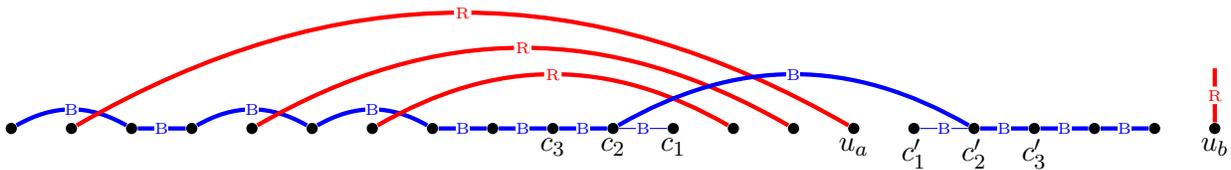

\begin{figure}[h!]
	\centering

     \begin{tikzpicture}[scale=0.8]
		\node [vtx] (1) at (-2,0) {}; 
		\node [vtx] (l2) at (-1,0) {}; 
		\node [vtx] (2) at (0,0) {}; 
		\node [vtx] (3) at (1,0) {};
		\node [vtx] (l3) at (2,0) {}; 
		\node [vtx] (3') at (3,0) {}; 
		\node [vtx] (l4) at (4,0) {}; 
		\node [vtx] (4) at (5,0) {}; 
		\node [vtx] (5) at (6,0) {}; 
		\node [vtx] (c3) at (7,0) {}; \node [below] at (c3) {$c_3$};
		\node [vtx] (c2) at (8,0) {}; \node [below] at (c2) {$c_2$};
		\node [vtx] (c1) at (9,0) {}; \node [below] at (c1) {$c_1$};
		\node [vtx] (r1) at (10,0) {};
		\node [vtx] (r2) at (11,0) {}; 
		\node [vtx] (ua) at (12 ,0) {}; \node [below] at (ua) {$u_a$};
		\node [vtx] (c1') at (13 ,0) {}; \node [below] at (c1') {$c_1'$};
		\node [vtx] (c2') at (14,0) {}; \node [below] at (c2') {$c_2'$};
		\node [vtx] (c3') at (15,0) {}; \node [below] at (c3') {$c_3'$};
        \node [vtx] (6) at (16 ,0) {}; 
		\node [vtx] (7) at (17,0) {};
		\node [vtx] (ub) at (18,0) {}; \node [below] at (ub) {$u_b$};
  
		\draw[blue,ultra thick] (1) to[arch] node[blabel]{} (2);
		\draw[blue,ultra thick] (2) to node[blabel]{} (3);
		\draw[blue,ultra thick] (3) to[arch] node[blabel]{} (3');
		\draw[blue,ultra thick] (3') to[arch] node[blabel]{} (4);
        \draw[blue,ultra thick] (4) to node[blabel]{} (5);
		\draw[blue] (5) to node[blabel]{} (c3);
        \draw[blue] (c3) to node[blabel]{} (c2);
        \draw[blue] (c2) to node[blabel]{} (c1);
		\draw[red] (l2) to[arch] node[rlabel]{} (ua);
        \draw[red] (l4) to[arch] node[rlabel]{} (r1);
        \draw[red] (l3) to[arch] node[rlabel]{} (r2);
        \draw[blue] (c1') to node[blabel]{} (c2');
        \draw[blue] (c2') to node[blabel]{} (c3');  
        \draw[blue] (c3') to node[blabel]{} (6); 
        \draw[blue] (6) to node[blabel]{} (7);
        \draw[red,ultra thick] (c1) to[arch] node[rlabel]{} (c1');
        \draw[red,ultra thick] (c3) to[arch] node[rlabel]{} (c3');
        \draw[red,ultra thick] (c2) to[arch] node[rlabel]{} (c2');
        \draw[red,ultra thick] (ub) to  node[rlabel]{} (18,1);
	\end{tikzpicture}

    \caption{The situation on the board when all of the edges $c_1c_1',c_2c_2',\ldots, c_kc_k'$ are colored red (for $k=3$). The edges contributing to the new state are in bold. Depending on the situation on the board, the red edge leaving $u_b$ might go to a vertex to the left or right of all of the displayed vertices.}
	\label{fig:cc'red}
\end{figure}
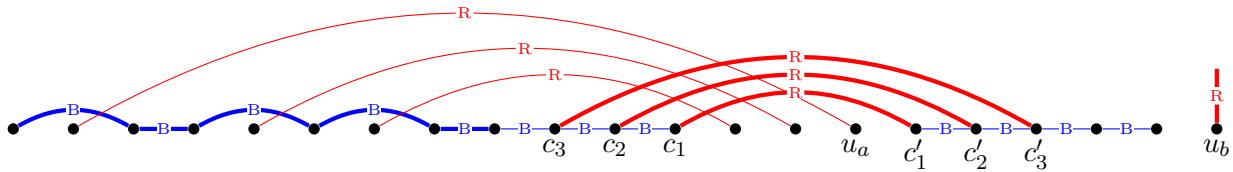

\section*{Acknowledgements}  We thank the referees for their careful reading of the manuscript and their valuable suggestions, in particular for suggesting a strategy for Builder improving the upper bound of Lemma~\ref{lemma:XPn}.

\bibliographystyle{abbrv}
\bibliography{orderedpath}

\end{document}